\DeclareMathOperator{\lcm}{lcm}
\newtheorem{thm}{\sc Theorem}[section]
\newtheorem{cor}[thm]{\sc Corollary}
\newtheorem{ex}[thm]{\sc Example}
\newtheorem{prop}[thm]{\sc Proposition}
\newtheorem{prop-defn}[thm]{\sc Proposition-Definition}
\newtheorem{defn}[thm]{\sc Definition}
\newtheorem{rem}[thm]{\sc Remark}
\numberwithin{equation}{section}
\newcommand{\NN}{\mathbb{N}}
\newcommand{\RR}{\mathbb{R}}
\newcommand{\ZZ}{\mathbb{Z}}
\newcommand{\nbf}{\mathbf{n}}
\begin{document}

\title{Normal submonoids and congruences on a monoid}
\author{Josep Elgueta}
\address{Departament de Matem\`atiques, Universitat Polit\`ecnica de Catalunya}
\email{josep.elgueta@upc.edu}
\thanks{The author is supported by the project PID2019-103849GB-I00 of MCIN/AEI/10.13039/501100011033.}
\subjclass[2010]{20M10}
\keywords{monoids; congruences}

\maketitle

\begin{abstract}
A notion of {\em normal submonoid} of a monoid $M$ is introduced that generalizes the normal subgroups of a group. When ordered by inclusion, the set $\mathsf{NorSub}(M)$ of normal submonoids of $M$ is a complete lattice. Joins are explicitly described, and the lattice is computed for the finite full transformation monoids $T_n$, $n\geq 1$. It is also shown that $\mathsf{NorSub}(M)$ is modular for a specific family of commutative monoids, including all Krull monoids, and that, as a join semilattice, embeds isomorphically onto a join subsemilattice of the lattice $\mathsf{Cong}(M)$ of congruences on $M$. This leads to a new strategy for computing $\mathsf{Cong}(M)$ consisting of computing $\mathsf{NorSub}(M)$, and the lattices of the so called unital congruences on the quotients of $M$ modulo its normal submonoids. This provides a new perspective on Malcev computation of the congruences on $T_n$.
\end{abstract}


\section{Introduction}
It is well known that congruences on a group $G$, and normal subgroups of $G$ are essentially the same. Thus every congruence on $G$ is completely determined by the equivalence class of the identity element $1\in G$, this equivalence class is always a normal subgroup of $G$, and every normal subgroup $N$ of $G$ is the identity element equivalence class of a congruence on $G$ (namely, the smallest congruence on $G$ containing $\{1\}\times N$). In other words, there is a canonical bijection
\[
\Psi_G:\mathsf{Cong}(G)\to\mathsf{NorSub}(G)
\]
mapping every congruence $R$ to the equivalence class $[1]_R$ whose inverse
\[
\Phi_G:\mathsf{NorSub}(G)\to\mathsf{Cong}(G)
\]
maps each normal subgroup $N$ to the unique congruence $R_N$ such that $[1]_{R_N}=N$, and whose equivalence classes are the (left or right) cosets of $G$ modulo $N$. In fact, both $\Psi_G$ and $\Phi_G$ are lattice isomorphisms when the domain and the codomain are ordered by inclusion. Indeed, both $\Psi_G$ and $\Phi_G$ are order-preserving, and every order-preserving bijection between lattices with an order-preserving inverse automatically preserves joins and meets because the lattice operations can be defined in terms of the ordering.

As it is also well known, things are not so simple for arbitrary monoids.  In fact, the equivalence class of the identity element modulo a congruence no longer determines the congruence. Thus if $M$ is a monoid, and $I$ is any ideal of $M$ (a nonempty subset $I$ such that $MIM\subseteq I$) the equivalence relation $R_I$ on $M$ given by
\[
(a,b)\in R_I\ \Longleftrightarrow\ \mbox{$a=b$ or $a,b\in I$}
\]
is a congruence, called the {\em Rees congruence} of $I$. It follows that for every {\em proper} ideal $I$ of $M$ the congruence $R_I$ is what we shall call a {\em unital congruence}, i.e. a congruence $R$ such that $[1]_{R}=\{1\}$. In general, these are not the only unital congruences on a monoid. Otherwise, finding all congruences on a given monoid will be much easier, as it will become clear in the sequel (cf. Theorem~\ref{conjunt_congruencies_M} below). For instance, if $M$ is the additive monoid $\NN_+=(\NN,+,0)$ it is easy to check that
\begin{equation}\label{congruencies_N}
R_{m,n}=\Delta_\NN\cup\{(i,j)\in\NN\times\NN\,:\,i,j\geq m,\,i\equiv j\, (\mathrm{mod.}\,n)\}\subseteq\NN\times\NN
\end{equation}
is a congruence on $\NN_+$ for each $m\geq 0$, and $n\geq 1$. In particular, all congruences $R_{m,n}$ with $m>0$ are unital, but only the congruences $R_{m,1}$ for every $m>0$ are Rees congruences.

The main purpose of this paper is to prove a weaker version of the above lattice isomorphism $\mathsf{NorSub}(G)\cong\mathsf{Cong}(G)$ that holds true for arbitrary monoids, and to discuss a strategy for computing the lattice of congruences of a monoid which follows from it. The key notion is that of a {\em normal submonoid} of a monoid. It will play the role of the normal subgroups of a group. 

The paper is organized as follows. In Section~2 a notion of {\em normal submonoid} of a monoid $M$ is introduced that we argue it plays the role of the normal subgroups of a group. Thus we shall see that the preimage $f^{-1}(1)$ for every monoid homomorphism $f:M\to N$, and the equivalence class of the identity element $1\in M$ modulo any congruence on $M$ are both normal submonoids of $M$ in our sense (Propositions~\ref{nucli_es_submonoide_normal} and \ref{lema_classe_neutre}, respectively), and that every normal submonoid of $M$ is the preimage $f^{-1}(1)$ of some monoid homomorphism $f:M\to N$ (Corollary~\ref{submonoide_normal=nucli}). As expected, the set $\mathsf{NorSub}(M)$ of normal submonoids of a monoid $M$, when ordered by inclusion, is a lattice with meets and joins respectively given by the intersection, and the normal closure of the union. The normal closure of an arbitrary subset $A$ of the monoid is explicitly described in terms of $A$ (Proposition~\ref{descripcio_clausura_normal}). As an example, the lattice $\mathsf{NorSub}(T_n)$ of the finite full transformation monoids $T_n$ is computed for each $n\geq 1$. In particular, it is shown that $T_n$ is ``normally simple'', i.e. such that its only proper normal submonoids are the normal subgroups of its group of units (Proposition~\ref{Tn_normalment_simple}). The modularity of the lattice $\mathsf{NorSub}(M)$ is shown for a particular type of monoids, including all free commutative monoids and, more generally, all Krull monoids (Theorem~\ref{monoides_modulars}, and Example~\ref{exemple_monoides_modulars}). In Section~3 it is shown a weakening of the lattice isomorphism $\mathsf{NorSub}(G)\cong\mathsf{Cong}(G)$ that holds for arbitrary monoids. More specifically, it is shown that for every monoid $M$ the map $\Phi_M:\mathsf{NorSub}(M)\to\mathsf{Cong}(M)$ sending each normal submonoid $S$ of $M$ to the smallest congruence $R_S$ on $M$ containing $\{1\}\times S$ is join-preserving, and a one-to-one map with left inverse given by the map $\Psi_M:\mathsf{Cong}(M)\to\mathsf{NorSub}(M)$ sending each congruence to the equivalence class of the identity element (Theorem~\ref{Phi_M_injectiva}). This leads to a possible strategy to compute the lattice of congruences on a monoid $M$ which essentially consists of finding the lattice of normal submonoids of $M$, and the lattices of unital congruences on the so called `normal quotients' of $M$, i.e. the quotients of $M$ modulo the congruences in the image of $\Phi_M$ (Theorem~\ref{conjunt_congruencies_M}). This way the problem of computing the congruences on the monoids reduces to just computing the unital ones. This strategy provides a new perspective on the classical theorem by Malcev describing all congruences on the finite full transformation monoids \cite{Malcev1952}, and it is used to show that these monoids are ``congruentially simple'', i.e. such that all congruences are completely determined by the equivalence class of the identity element except when this class reduces to $\{1\}$. Although this result is an easy consequence of Malcev theorem, we prove it without using this theorem.

\bigskip
\noindent
{\em Note}. One week before finishing this work, Martins-Ferreira and Sobral put on the web a preprint \cite{MartinsFerreira-Sobral2022} where it appears the same notion of normal submonoid used here. In Remark~1 of this preprint, the authors mention a paper by Facchini and Rodaro \cite{Facchini-Rodaro2017} where this notion seems to be introduced for the first time. The present work has been done completely independently from both papers.

\section{Normal submonoids of a monoid}

In what follows, $M=(M,\cdot,1)$ stands for a monoid, and $U(M)$ for its group of {\em units} (i.e. the elements $x\in M$ having a two-sided inverse $x^{-1}\in M$). For every subset $T\subseteq M$ we shall denote by $\langle T\rangle$ the smallest submonoid containing $T$. Thus $\langle T\rangle$ consists of all finite products of elements in $T$, including the empty product (equal to $1$ by convention). In case $M$ is commutative, we shall use additive notation. In particular, the identity element will be denoted by $0$ instead of $1$.

\subsection{Groupal, invariant, and normal submonoids} By a {\em subgroup} of a monoid $M$ it is usually meant a subsemigroup (i.e. a non-empty subset closed with respect to $\cdot$) which is a group with the induced binary operation. Notice that the identity element of a generic subgroup of $M$ need not coincide with the identity $1$ of $M$ so that a generic subgroup of $M$ need not be a submonoid. For instance, for every idempotent $e\neq 1$ of $M$ the set $\{e\}$ is a subgroup of $M$ but not a submonoid. 

Together with this notion, however, there is also the following one, of more interest to us, which also reduces to the notion of subgroup when $M$ is a group.

\begin{defn}
A {\em groupal submonoid} of $M$ is a submonoid $S\subseteq M$ closed under inverses, i.e. such that $x^{-1}\in S$ for each $x\in S\cap U(M)$.
\end{defn}

\noindent
Clearly, every subgroup $H$ of $U(M)$ is a groupal submonoid, but the converse is false in general. Thus if $U(M)=\{1\}$ (for instance, if $M$ is the additive monoid $\NN_+=(\NN,+,0)$ of natural numbers), every non-trivial submonoid of $M$ is automatically a groupal submonoid not included in $U(M)$. On the other hand, in a generic monoid there are submonoids which are not groupal. For instance,  for every $n\times n$ invertible matrix $A$ whose inverse is not a power of itself the subset $\{A^k,\,k\geq 0\}$ is a submonoid of the multiplicative monoid $M_n(k)$ of all $n\times n$ matrices but not a groupal submonoid.

For every subset $T\subseteq M$ we shall denote by $\langle T\rangle'$ the smallest groupal submonoid containing $T$ (equivalently, the intersection of all groupal submonoids containing $T$). In general, it is different from the submonoid $\langle T\rangle$ generated by $T$ because it consists of all finite products of elements of $T$ and/or their inverses (when invertible). For instance, if $M=M_2(\RR)$ then
\[
\left\langle \left(\begin{array}{cc} 1&0\\0&2\end{array}\right)\right\rangle=\left\{\left(\begin{array}{cc} 1&0\\0&2\end{array}\right)^k,\,k\geq 0\right\}
\]
while
\[
\left\langle \left(\begin{array}{cc} 1&0\\0&2\end{array}\right)\right\rangle'=\left\{\left(\begin{array}{cc} 1&0\\0&2\end{array}\right)^k,\,k\in\ZZ\right\}.
\]
Actually, we are interested in the analog for monoids of the normal subgroups of a group. We want them to be a particular type of submonoids such that the equivalence class of the identity element modulo any congruence is of this type. At first sight, we might be tempted to take as analog the groupal submonoids $S$ which are invariant under conjugation by arbitrary invertible elements in $M$ (i.e. such that $xSx^{-1}\subseteq S$ for every $x\in U(M)$) or, more generally, such that $xSy\subseteq S$ for each $x,y\in M$ such that $xy\in S$. For every congruence, the equivalence class of the identity element indeed satisfies this condition. However, this condition turns out to be too weak if we also want that every `normal submonoid' be the identity element equivalence class modulo some congruence on $M$, as it is the case for the normal subgroups of a group. In fact, if $S$ is invariant in this sense it may happen that the smallest congruence $R$ on $M$ for which $S\subseteq [1]_R$ is a congruence such that $S\neq[1]_R$. If so, there will be no congruence with $S$ as the equivalence class of $1$. The fact that this can indeed happen will become clear in the sequel. As argued below, the right notion of invariance turns out to be as follows.
 
\begin{defn}
A subset $A\subseteq M$ is called {\em invariant} if for each $x,y\in M$ the following conditions are equivalent:
\begin{itemize}
\item[(i)] $xAy\subseteq A$,
\item[(ii)] $xy\in A$,
\item[(iii)] $(xAy)\cap A\neq\emptyset$.
\end{itemize}
\end{defn}
It is worth introducing the (two-sided) `stability set' of any subset $A$, and its generalized conjugates, and to rewrite the invariance condition of a {\em submonoid} in terms of it.

\begin{defn}
The (two-sided) {\em stability set} (relative to $M$) of a subset $A\subseteq M$ is the set 
\[
X_A:=\{(x,y)\in M\times M\,:\, (xAy)\cap A\neq\emptyset\}.
\]
The subsets $xAy$ for each $(x,y)\in X_A$ will be called the {\em generalized conjugates} of $A$ (in $M$).
\end{defn}
\noindent
It readily follows from the definition that $X_A\subseteq X_{B}$ if $A\subseteq B$. Moreover, for a {\em submonoid} $S$ the stability set $X_S$ clearly contains $S\times S$ as well as the set $\{(x,x^{-1}),\,x\in U(M)\}$. This is why the subsets $xAy$ for each $(x,y)\in X_A$ are called the generalized conjugates of $A$: when $A$ is a submonoid $S$ they include the usual conjugates $xSx^{-1}$ of $S$ for each $x\in U(M)$. In general, however, $X_S$ contains many pairs other than these. For instance
\[
X_{\{1\}}=\{(x,y)\in M\times M\,:\,xy=1\},
\]
and the equation $xy=1$ may have solutions which are neither in $S\times S$ nor of the form $(x,x^{-1})$ for some $x\in U(M)$. Finally, let us remark that when $M$ is {\em commutative} $X_S$ is a submonoid of $M\times M$ (containing the submonoid $S\times S$). 

In terms of the stability set, and the generalized conjugates the invariance condition on a submonoid $S$ can be restated as follows.

\begin{prop}
A submonoid $S\subseteq M$ is invariant if and only if 
\begin{equation}\label{condicio_invariant}
xSy\subseteq S\qquad \forall\,(x,y)\in X_S,
\end{equation}
i.e. iff all its generalized conjugates are subsets of itself.
\end{prop}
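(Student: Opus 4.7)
The plan is to unpack the invariance condition into its three component implications and observe that for a submonoid two of these implications come for free from the presence of $1$. Concretely, the equivalence of (i), (ii), (iii) in the definition of invariance breaks into six implications, but among these only (iii)$\Rightarrow$(i) carries any content when $S$ is assumed to be a submonoid; the stated condition $xSy\subseteq S$ for every $(x,y)\in X_S$ is precisely this implication, so the result will fall out as a bookkeeping exercise once the easy directions are disposed of.

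For the forward direction I would simply note that if $S$ is invariant then by definition (i), (ii), (iii) are equivalent for all $x,y\in M$; in particular (iii)$\Rightarrow$(i), which is exactly the statement that $xSy\subseteq S$ for every pair $(x,y)\in X_S$.

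For the converse I would fix arbitrary $x,y\in M$ and establish the cycle (i)$\Rightarrow$(ii)$\Rightarrow$(iii)$\Rightarrow$(i). Since $1\in S$, one has $xy=x\cdot 1\cdot y\in xSy$, so (i) gives $xy\in S$, proving (i)$\Rightarrow$(ii); the same identity $xy\in xSy$ combined with $xy\in S$ exhibits an element of $(xSy)\cap S$, which is (ii)$\Rightarrow$(iii); finally, (iii)$\Rightarrow$(i) is exactly the hypothesis $xSy\subseteq S$ for every $(x,y)\in X_S$.

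There is no real obstacle here: the content of the proposition is merely that, once one restricts to submonoids, the triple equivalence in the definition of invariance collapses to a single nontrivial inclusion. The role of the hypothesis $1\in S$ is precisely to make (i)$\Rightarrow$(ii) and (ii)$\Rightarrow$(iii) automatic. I would keep the write-up short, presenting the three implications in the converse direction as a single three-line display, and not bother to introduce extra notation beyond what is already in the excerpt.
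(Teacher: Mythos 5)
Your proof is correct and follows essentially the same route as the paper: both observe that for a submonoid the implications (i)$\Rightarrow$(ii) and (ii)$\Rightarrow$(iii) are automatic because $xy=x\cdot 1\cdot y\in xSy$, so the triple equivalence reduces to the single implication (iii)$\Rightarrow$(i), which is precisely condition~(\ref{condicio_invariant}). No gaps.
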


\begin{proof}
If $A$ is a submonoid $S$, not just a subset, condition (i) in the definition of invariant subset implies (ii) because $xy=xey\in A$, and (ii) implies (iii) because $xy=xey\in (xAy)\cap A$. Hence conditions (i) to (iii) are equivalent if and only if (iii) implies (i).
\end{proof}

Notice that every submonoid satisfying (\ref{condicio_invariant}) is invariant in the previous weaker sense, but not conversely. For instance, a submonoid of a monoid with a trivial group of units is automatically invariant in the weak sense, but not necessarily in our stronger sense. An example is provided by the monoid $\NN_+$ (see Proposition~\ref{submonoides_invariants_de_N} below). This leads us to the following analog for arbitrary monoids of the normal subgroups of a group.

\begin{defn}

A {\em normal submonoid} of a monoid $M$ is a submonoid which is both groupal and invariant.
\end{defn}

\begin{rem}{\rm
Usually, a submonoid $S$ of a monoid $M$ is called normal when $xS=Sx$ for each $x\in M$.~\footnote{\,For instance, this is the notion of `normal submonoid` which appears in the groupprops webpage; see https://groupprops.subwiki.org/wiki/Normal\_submonoid.} In fact, every subgroup $H$ of $U(M)$ normal in this sense is a normal submonoid in our sense, as the reader may easily check. However, even restricting to subgroups $H$ of $U(M)$, this condition is not equivalent to the invariance condition~(\ref{condicio_invariant}). For instance, $U(T_n)=S_n$ is a normal submonoid of $T_n$ in our sense (see Proposition~\ref{U(M)_normal} below), but it is not true that $f\,S_n=S_n\,f$ for every transformation $f\in T_n$ if $n>1$ (just take $n=2$, and $f$ any constant map). The same problem persists with the (non-equivalent) condition $xSx^{-1}\subseteq S$ for each invertible element $x\in U(M)$.
}
\end{rem}

If $S$ is a normal submonoid of $M$ we shall write $S\lhd M$. Clearly, if $S\lhd M$, and $N$ is a submonoid of $M$ containing $S$ then $S\lhd N$. But a normal submonoid of a normal submonoid of $M$ need not be a normal submonoid of $M$. In fact, this is already so when $M$ is a group.

Next results provide examples of normal submonoids. The first two show that our normal submonoids can be thought of as analogs for the generic monoids of the normal subgroups of a group. Additional arguments in favor of this viewpoint are discussed in the sequel. The third one highlights the difference between the submonoids
\begin{equation}\label{langle n rangle}
\langle n\rangle:=\{kn,\,k\geq 0\},\qquad n\geq 0
\end{equation}
of $\NN_+$, and all other submonoids of $\NN_+$.

\begin{prop}
The normal submonoids of a group $G$ are the normal subgroups of $G$.
\end{prop}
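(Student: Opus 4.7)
The plan is to prove the two inclusions separately, using the fact that in a group $G$ every element is invertible, so $U(G)=G$.

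For the forward direction, suppose $S\lhd G$. Since $S$ is a submonoid and groupal with $S\cap U(G)=S$, the set $S$ is closed under inverses and multiplication, hence a subgroup of $G$. To upgrade this to a normal subgroup in the classical sense, I would exploit the invariance condition via the identity: for every $g\in G$ the element $g\cdot 1\cdot g^{-1}=1$ lies in $S\cap gSg^{-1}$, so $(g,g^{-1})\in X_S$. Invariance (condition (\ref{condicio_invariant})) then gives $gSg^{-1}\subseteq S$ for every $g\in G$, which is exactly classical normality.

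For the converse, suppose $S$ is a normal subgroup of $G$. It is obviously a submonoid and, being a subgroup, is groupal. The only nontrivial step is to verify the invariance condition. Given $(x,y)\in X_S$, choose $s_0\in S$ with $xs_0y\in S$; then for an arbitrary $s\in S$ I would write
\[
xsy \;=\; \bigl(x\,s s_0^{-1}\,x^{-1}\bigr)\cdot (x s_0 y).
\]
Since $ss_0^{-1}\in S$ and $S$ is normal in $G$, the first factor lies in $S$; the second factor lies in $S$ by the choice of $s_0$. Hence $xSy\subseteq S$, as required, and $S\lhd G$ in the sense of the paper.

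I do not expect any real obstacle here. The only subtlety is the invariance check in the converse: one must avoid the temptation to use a conjugation $xsx^{-1}$ directly, since $y$ need not equal $x^{-1}$; the trick is to ``normalize'' the arbitrary element $xsy$ against the witness $xs_0y$ so that the unknown pair $(x,y)$ cancels and only a conjugation by $x$ of the element $ss_0^{-1}\in S$ remains.
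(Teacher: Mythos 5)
Your proof is correct and takes essentially the same route as the paper's: the converse direction is the identical computation $xsy=(xss_0^{-1}x^{-1})(xs_0y)$ (the paper writes the conjugated element as $\tilde{s}$), and the forward direction differs only cosmetically, in that you exhibit $(g,g^{-1})\in X_S$ directly via the witness $g\cdot 1\cdot g^{-1}=1\in S$ and then apply invariance to get all of $gSg^{-1}\subseteq S$ at once, whereas the paper argues by contradiction using the pair $(x,sx^{-1})$ for each individual $s$. Your version of the forward step is marginally cleaner but not a different idea.
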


\begin{proof}
Let be $S$ a normal submonoid of $G$, and let us supose that for some $x\in G$ and $s\in S$ we have $xsx^{-1}\notin S$. Using the equivalence between (ii) and (iii) with $y=sx^{-1}$ we conclude that
\[
(xS(sx^{-1}))\cap S=\emptyset.
\]
However, $e=xx^{-1}=xs^{-1}(sx^{-1})\in(xS(sx^{-1}))\cap S$, contradicting our conclusion. Therefore $xSx^{-1}\subseteq S$ for every $x\in G$. Conversely, let $S$ be a normal subgroup of $G$. We have to see that $S$ satisfies (\ref{condicio_invariant}). Indeed, let us suppose that $(xSy)\cap S\neq\emptyset$ for some $x,y\in G$. This means that there exist $s_1,s_2\in S$ such that $xs_1y=s_2$. Then for each $s\in S$ we have $xsy=\tilde{s}s_2$ with $\tilde{s}=xss^{-1}x^{-1}$. Now, $\tilde{s}\in S$ because $S$ is a normal subgroup of $G$. Hence $xsy\in S$, and $xSy\subseteq S$. 
\end{proof}

\begin{prop}\label{nucli_es_submonoide_normal}
Let $f:M\to N$ be a monoid homomorphism. Then $f^{-1}(1)$ is a normal submonoid of $M$.
\end{prop}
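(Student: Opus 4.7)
The plan is to verify directly the three defining conditions for a normal submonoid applied to $S := f^{-1}(1)$: namely, that $S$ is a submonoid, that it is groupal, and that it is invariant. The first two are routine, and the entire substance lies in the invariance check, where one exploits the multiplicativity of $f$ together with the preceding proposition that characterizes invariance of a submonoid as $xSy \subseteq S$ for all $(x,y) \in X_S$.

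First I would observe that $1 \in S$ because $f(1)=1$, and that $S$ is closed under multiplication because $f(ab)=f(a)f(b)=1\cdot 1 = 1$ for $a,b\in S$; hence $S$ is a submonoid of $M$. For the groupal condition, if $x \in S \cap U(M)$, then $f(x)=1$ is invertible in $N$ with $f(x)^{-1}=1$, and since $f$ sends inverses to inverses whenever the image is invertible, $f(x^{-1})=f(x)^{-1}=1$, so $x^{-1}\in S$.

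For invariance, by the proposition just proved it suffices to show that $xSy \subseteq S$ for every pair $(x,y)\in X_S$. So suppose $(xSy)\cap S \neq \emptyset$, i.e.\ there exists some $s_0 \in S$ with $xs_0y \in S$. Applying $f$ to the equation characterizing $xs_0y \in S$ and using $f(s_0)=1$, one gets
\[
1 = f(xs_0y) = f(x)\,f(s_0)\,f(y) = f(x)\,f(y).
\]
This identity $f(x)f(y)=1$ in $N$ depends only on $x$ and $y$, not on the chosen $s_0$. Hence for any other $s\in S$,
\[
f(xsy) = f(x)\,f(s)\,f(y) = f(x)\cdot 1\cdot f(y) = f(x)\,f(y) = 1,
\]
so $xsy\in S$, proving $xSy\subseteq S$ as required.

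The main obstacle is essentially conceptual rather than computational: one must recognize that the invariance axiom, which at first sight is a three-way equivalence about the joint behaviour of $x$, $y$ and elements of $S$, collapses for a submonoid to the single implication $(xSy)\cap S \neq \emptyset \Rightarrow xSy\subseteq S$, and that this implication is automatic from the homomorphism property because a single witness $s_0$ already forces $f(x)f(y)=1$ and thereby controls every other $s\in S$. No further calculation is needed.
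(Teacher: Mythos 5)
Your proof is correct and follows essentially the same route as the paper: a single witness $s_0$ with $xs_0y\in f^{-1}(1)$ forces $f(x)f(y)=1$, which then controls $f(xsy)$ for every $s\in f^{-1}(1)$. You merely spell out the groupal and submonoid checks that the paper dismisses as ``clear''.
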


\begin{proof}
Clearly, $f^{-1}(1)$ is always a groupal submonoid of $M$. Let us assume that for some $x,y\in M$ there exists $z_1,z_2\in f^{-1}(1)$ such that $xz_1y=z_2$. Then $1_N=f(z_2)=f(xz_1y)=f(x)f(y)$ and hence, for every $z\in f^{-1}(1)$ we have $f(xzy)=f(x)f(y)=1_N$, i.e. $x\,f^{-1}(1)\,y\subseteq f^{-1}(1)$. Thus $f^{-1}(1)$ is also invariant.
\end{proof}

\begin{prop}\label{submonoides_invariants_de_N}
The normal submonoids of $\NN_+$ are the submonoids (\ref{langle n rangle}).
\end{prop}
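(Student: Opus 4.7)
The plan is to prove both inclusions, with the forward direction being routine and the reverse direction relying on Euclidean division together with a careful choice of test pair for the invariance condition.

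First I would verify that each $\langle n\rangle = n\NN$ (for $n\geq 0$) is a normal submonoid. Since $U(\NN_+)=\{0\}$ and $0$ belongs to every submonoid, the groupal condition is automatic for any submonoid of $\NN_+$. For invariance of $n\NN$, note that if $x+y\in n\NN$ then for every $s\in n\NN$ one has $x+s+y=(x+y)+s\in n\NN$, so $x+n\NN+y\subseteq n\NN$; the reverse implications in the definition of invariant subset are trivial for submonoids by the previous proposition.

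For the converse, let $S$ be a normal submonoid of $\NN_+$. If $S=\{0\}$ then $S=\langle 0\rangle$, so assume $S$ contains a positive element and let $n$ be the least such. Then $\langle n\rangle\subseteq S$ because $S$ is a submonoid. To get the reverse inclusion, take any $m\in S$ and apply Euclidean division to write $m=qn+r$ with $0\leq r<n$. It then suffices to show $r\in S$, since the minimality of $n$ will force $r=0$ and hence $m\in\langle n\rangle$.

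The crux is the choice of the pair $(x,y)=(r,0)$. Since $n\in S$ implies $qn\in S$, and since $r+qn=m\in S$, we have
\[
m\in(r+S+0)\cap S,
\]
so $(r+S+0)\cap S\neq\emptyset$. The invariance condition (iii)$\Rightarrow$(ii) then yields $x+y=r\in S$, as required. The main obstacle, and really the only nontrivial point, is spotting this test pair: the naive choices $(r,qn)$ or $(0,m)$ merely recover facts already known, whereas $(r,0)$ works precisely because $S$ happens to contain the element $qn$ that bridges $r$ to the known element $m\in S$.
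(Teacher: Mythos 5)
Your converse direction is correct and is essentially the paper's own argument: Euclidean division of an element of $S$ by the least positive element $n$, followed by applying invariance to a test pair built from the remainder $r$. The paper uses the pair $(0,r)$ and phrases the step as a contradiction with the minimality of $n':=\min(S\setminus\langle n\rangle)$, whereas you use $(r,0)$ and argue directly via (iii)$\Rightarrow$(ii) for an arbitrary $m\in S$; by commutativity these are the same test, and your direct formulation is if anything slightly cleaner.

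The gap is in the direction you call routine. By the proposition you invoke, invariance of a submonoid is equivalent to the single implication (iii)$\Rightarrow$(i); that proposition only supplies (i)$\Rightarrow$(ii)$\Rightarrow$(iii) for free. What you actually verify for $\langle n\rangle$ is (ii)$\Rightarrow$(i) (your hypothesis is $x+y\in n\NN$, which is condition (ii)), and you then dismiss the rest as ``trivial for submonoids by the previous proposition.'' But the implication still missing, (iii)$\Rightarrow$(ii) (equivalently (iii)$\Rightarrow$(i)), is precisely the non-automatic one, and it genuinely fails for some submonoids of $\NN_+$: for $S=\{0\}\cup\{k\,:\,k\geq 2\}$ one has $(1+S+0)\cap S\neq\emptyset$ while $1+0\notin S$, so nothing about being a submonoid makes it trivial. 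For $\langle n\rangle$ the missing step is a one-liner --- if $x+nq+y=nq'$ then $q'\geq q$, so $x+y=n(q'-q)\in\langle n\rangle$, which is exactly how the paper handles it --- but it has to be said; as written, your verification does not establish that $\langle n\rangle$ is invariant.
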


\begin{proof}
The submonoid $\langle n\rangle$ is clearly groupal. Moreover, if $(k,l)\in X_{\langle n\rangle}$ there exists $q,q'\geq 0$ such that $k+nq+l=nq'$. Hence $q'\geq q$, and $k+l\in\langle n\rangle$ so that $k+\langle n\rangle+l\subseteq\langle n\rangle$, i.e. $\langle n\rangle$ is invariant. Let us now prove that these are the only normal submonoids of $\NN_+$. Since all submonoids of $\NN_+$ are groupal, we have to see that these are the only invariant submonoids. To prove this, let be $S$ any submonoid of $\NN_+$, and let be $n:=\min(S\setminus\{0\})$, so that $\langle n\rangle\subseteq S$. If $S\neq\langle n\rangle$ let be $n':=\min(S\setminus\langle n\rangle)$. Then we have $n'=nq'+r$ for some $0<r<n$ and $q'\geq 1$ ($q'\neq 0$ because $n$ is the least nonzero element of $S$). It follows that $n'\in(0+S+r)\cap S$, i.e. $(0,r)\in X_S$. However, $0+S+r\nsubset S$ because $r\in 0+S+r$ while $r\notin S$.
\end{proof}

\begin{rem}{\rm 
It follows from the previous result that in a {\em commutative} monoid not every groupal submonoid is necessarily invariant. For instance, the subset $S_n:=\{k,\,k\geq n\}\cup\{0\}\subseteq\NN$ is a groupal submonoid of $\NN_+$ for each $n\geq 2$ but not a normal submonoid. Hence the set (in fact, lattice) of normal submonoids of a generic commutative monoid does not coincide with the set of all submonoids, as it occurs when the monoid is a group. }
\end{rem}

Let us finish this paragraph with two more examples of normal submonoids. The second one provides another example of a commutative monoid having groupal submonoids which are not invariant.

\begin{ex}\label{N_max}{\rm
Let $\NN_{\max}$ be the set $\NN$ of natural numbers equipped with the product given by
\[
mn:=\max(m,n).
\]
$\NN_{\max}$ is a commutative monoid with $0$ as identity element. We claim that $\{0,1,\ldots,n\}$ is a normal submonoid of $\NN_{\max}$ for each $n\geq 0$. It is clearly closed under products, but also under inverses because $0$ is the only invertible element. Moreover, the reader may easily check that
\[
X_{\{0,1,\ldots,n\}}=\{0,1,\ldots,n\}^2.
\]
Since the product of elements $\leq n$ is $\leq n$ it follows that $\{0,1,\ldots,n\}$ is also invariant.
}
\end{ex}

\begin{ex}\label{monoide_biciclic}{\rm
Let $\NN\boxtimes\NN$ be the set $\NN\times\NN$ equipped with the product given by
\[
(m,n)(p,q)=(m+\max(n,p)-n,q+\max(n,p)-p).
\]
$\NN\boxtimes\NN$ is a commutative monoid, called the {\em bicyclic monoid}, with $(0,0)$ as identity element (it is isomorphic to the monoid defined by the presentation $\langle a,b\,|\,ab=1\rangle$; see \cite{Howie1995}, p.~31-32). Then the diagonal $\Delta=\{(n,n),\,n\geq 0\}$ is a normal submonoid. Indeed, it is clearly closed under products. Moreover, the reader may easily check that $(m,n)(p,q)=(0,0)$ if and only if $m=q=0$ and $n=p$, while $(n,0)(0,n)=(n,n)$. Hence $(0,0)$ is the only invertible element, from which it follows that $\Delta$ is also closed under inverses and hence, a groupal submonoid. To prove it is invariant notice that
\[
(k,l)(n,n)(r,s)=(k+\max(l,n,r)-l,s+\max(l,n,r)-r)
\]
so that $(k,l)(n,n)(r,s)\in\Delta$ if and only if $k-l=s-r$, independently of $n$. Hence
\[
X_{\Delta}=\{((k,l),(r,r+k-l)),\,k,l,r\geq 0\},
\]
and $\Delta$ indeed contains all its generalized conjugates. Notice that, as a monoid, $\Delta$ is isomorphic to the monoid in Example~\ref{N_max} and hence, $S_n:=\{(0,0),(1,1),\ldots,(n,n)\}$ is a normal submonoid of $\Delta$ for each $n\geq 0$. However, $S_n$ for $n\geq 2$ is not invariant as a groupal submonoid of $\NN\boxtimes\NN$. Thus $((n-1,1),(2,n))\in X_{S_n}$ because $(n-1,1)(2,n)=(n,n)$ but for $n>2$ we have $(n-1,1)(n,n)(2,n)=(2n-2,2n-2)\notin S_n$.
}
\end{ex}

\subsection{Normal closure}\label{clausura_normal}
As for groups, given any subset $A$ of a generic monoid $M$, by the {\em normal closure} of $A$ (in $M$) we shall mean the smallest normal submonoid of $M$ containing $A$. It will be denoted by $\mathrm{ncl}_M(A)$, or just $\mathrm{ncl}(A)$ if no confusion arises, and by $\mathrm{ncl}_M(x_1,\ldots,x_n)$ in the finite subset case $A=\{x_1,\ldots,x_n\}$.

Next result implies that $\mathrm{ncl}_M(A)$ is given as usually, i.e. by the intersection of all normal submonoids of $M$ containing $A$.

\begin{prop}
The intersection of normal submonoids of $M$ is a normal submonoid of $M$.
\end{prop}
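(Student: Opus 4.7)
The plan is to verify in turn the three defining properties of a normal submonoid for the intersection. Let $\{S_i\}_{i \in I}$ be a family of normal submonoids of $M$, and set $S := \bigcap_{i \in I} S_i$.

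First I would check that $S$ is a submonoid: each $S_i$ contains $1$, so $1 \in S$; and if $s, t \in S$ then $s, t \in S_i$ for every $i$, whence $st \in S_i$ for every $i$, so $st \in S$. Next, that $S$ is groupal: if $x \in S \cap U(M)$, then $x \in S_i \cap U(M)$ for each $i$, so $x^{-1} \in S_i$ for each $i$ because $S_i$ is groupal, and therefore $x^{-1} \in S$.

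The substantive step is invariance. By the characterization in the preceding proposition, since $S$ is a submonoid it suffices to show that $xSy \subseteq S$ whenever $(x,y) \in X_S$. So fix such a pair $(x,y) \in X_S$; by definition there exists $s_0 \in S$ with $xs_0 y \in S$. Since $S \subseteq S_i$, both $s_0$ and $xs_0y$ lie in $S_i$, so $(x,y) \in X_{S_i}$, and invariance of $S_i$ gives $xS_iy \subseteq S_i$. For an arbitrary $t \in S$ we have $t \in S_i$ for every $i$, so $xty \in xS_iy \subseteq S_i$ for every $i$, and therefore $xty \in S$. This proves $xSy \subseteq S$, as required.

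There is no real obstacle. The only slightly non-routine point, which one must keep in mind, is the monotonicity $X_S \subseteq X_{S_i}$ whenever $S \subseteq S_i$ (already noted in the paper after the definition of the stability set), used together with the simplified characterization of invariance available for submonoids to reduce the verification to a one-line check on generalized conjugates.
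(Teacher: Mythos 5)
Your proof is correct and follows essentially the same route as the paper: verify the submonoid and groupal properties directly, then use the monotonicity $X_S\subseteq X_{S_i}$ together with the invariance of each $S_i$ to conclude $xSy\subseteq\bigcap_i S_i$. The paper's version is just terser, asserting the groupal-submonoid part as clear and invoking $X_S\subseteq X_{S_\lambda}$ without unfolding the definition.
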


\begin{proof}
Let $\{S_\lambda\}_{\lambda\in\Lambda}$ be a family of normal submonoids of $M$, and let be
\[
S:=\bigcap_{\lambda\in\Lambda}S_\lambda.
\]
Clearly, $S$ is a groupal submonoid. Moreover, since $S\subseteq S_\lambda$ we have $X_{S}\subseteq X_{S_\lambda}$ for each $\lambda\in\Lambda$. Thus for each $(x,y)\in X_{S}$ we have
\[
xSy\subseteq xS_\lambda y\subseteq S_\lambda
\]
because each $S_\lambda$ is invariant. Hence $xSy\subseteq S$, i.e. $S$ is also invariant.
\end{proof}

\begin{cor}
For every subset $A\subseteq M$ we have
\[
\mathrm{ncl}_M(A)=\bigcap_{S\in\mathsf{NorSub}_A(M)}S
\]
with $\mathsf{NorSub}_A(M)$ the set of all normal submonoids of $M$ containing $A$.
\end{cor}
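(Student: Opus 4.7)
The plan is entirely routine given the preceding proposition. Let me write $T := \bigcap_{S \in \mathsf{NorSub}_A(M)} S$. First I would note that $\mathsf{NorSub}_A(M)$ is non-empty, since $M$ itself is a normal submonoid containing $A$, so the intersection is well-defined. By the previous proposition, $T$ is a normal submonoid of $M$; and since every $S$ in the collection contains $A$, so does $T$. Hence $T$ itself belongs to $\mathsf{NorSub}_A(M)$.

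It then remains to establish the two inclusions. For $\mathrm{ncl}_M(A) \subseteq T$, I would invoke the defining property of $\mathrm{ncl}_M(A)$ as the smallest normal submonoid containing $A$: since $T$ is such a normal submonoid, $\mathrm{ncl}_M(A) \subseteq T$. For the reverse inclusion $T \subseteq \mathrm{ncl}_M(A)$, observe that $\mathrm{ncl}_M(A) \in \mathsf{NorSub}_A(M)$, so $\mathrm{ncl}_M(A)$ appears as one of the sets being intersected, and therefore $T \subseteq \mathrm{ncl}_M(A)$.

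The only conceptual point worth flagging, rather than an obstacle, is the existence of $\mathrm{ncl}_M(A)$: one must know a priori that a smallest normal submonoid containing $A$ exists. The above argument in fact establishes this simultaneously, by exhibiting $T$ as a concrete candidate that is both normal and contained in every normal submonoid containing $A$. There is no real difficulty anywhere in the argument; the substance is entirely in the previous proposition, which ensures that the intersection stays inside the class of normal submonoids.
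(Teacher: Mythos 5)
Your argument is correct and is exactly the (implicit) argument the paper intends: the corollary is stated without proof as an immediate consequence of the preceding proposition that intersections of normal submonoids are normal. Your remark that the intersection simultaneously establishes the existence of the smallest normal submonoid containing $A$ is a worthwhile observation, since the paper's definition of $\mathrm{ncl}_M(A)$ presupposes that existence.
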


\noindent
Furthermore, as any closure operator, the normal closure operator is idempotent, preserves inclusions, and it is such that for any subsets $A,B$ of $M$
\begin{equation}\label{clausura_reunio_subconjunts}
\mathrm{ncl}_M(A\cup B)=\mathrm{ncl}_M(\mathrm{ncl}_M(A)\cup\mathrm{ncl}_M(B)).
\end{equation}
When $M$ is a group $G$, the normal closure of $A$ is the subgroup of $G$ generated by the set of all conjugacy classes of elements in $A$. For a generic monoid, things are a little bit more sophisticated, and $\mathrm{ncl}_M(A)$ is built from $A$ as follows. Let be $A_n$ for $n\geq 0$ the groupal submonoids of $M$ recursively defined by
\begin{align}
A_0&:=\langle A\rangle', \label{def_An-1}
\\
A_n&:=\langle\bigcup_{\tiny{(x,y)\in X_{A_{n-1}}}}xA_{n-1}y\ \ \rangle',\quad n\geq 1 \label{def_An-2}
\end{align}
with $\langle \rangle'$ the {\em groupal} submonoid generated by $A$, and $X_A$ the stability set of $A$ for any subset $A\subseteq M$. Notice that
\[
A_0\subseteq A_1\subseteq \cdots\subseteq A_n\subseteq \cdots
\]
because $(1,1)\in X_{A}$ for each $A$. Consequently, we also have
\[
X_{A_0}\subseteq X_{A_1}\subseteq\cdots\subseteq X_{A_{n}}\subseteq\cdots
\]

\begin{prop}\label{descripcio_clausura_normal}
For every subset $A\subseteq M$ the normal closure of $A$ in $M$ is
\[
\mathrm{ncl}_M(A)=\bigcup_{n\geq 0}A_n.
\]
\end{prop}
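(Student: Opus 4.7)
The plan is to prove the two standard inclusions: first that $U:=\bigcup_{n\geq 0}A_n$ is itself a normal submonoid of $M$ containing $A$, and second that every normal submonoid containing $A$ must contain $U$. Minimality will then force $U=\mathrm{ncl}_M(A)$.

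For the minimality direction, I would argue by induction on $n$ that $A_n\subseteq S$ for every normal submonoid $S$ containing $A$. The base case $A_0=\langle A\rangle'\subseteq S$ is immediate since $S$ is a groupal submonoid containing $A$. For the inductive step, assume $A_{n-1}\subseteq S$; this yields $X_{A_{n-1}}\subseteq X_S$, so for each $(x,y)\in X_{A_{n-1}}$ the invariance of $S$ gives $xA_{n-1}y\subseteq xSy\subseteq S$. The union of these generalized conjugates thus sits in $S$, and since $S$ is a groupal submonoid, its groupal closure $A_n$ sits in $S$ as well.

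The nontrivial direction is showing $U$ is a normal submonoid. Since $A\subseteq A_0\subseteq U$, containment is clear. That $U$ is a groupal submonoid follows from the fact that $\{A_n\}_{n\geq 0}$ is an ascending chain of groupal submonoids: any finite set of elements of $U$ lies in a common $A_N$, where products and inverses-when-invertible are already available. The main obstacle is invariance: I must show that for every $(x,y)\in X_U$ one has $xUy\subseteq U$. Given such $(x,y)$, there exist $z_1,z_2\in U$ with $xz_1y=z_2$; choosing $k$ large enough that both $z_1,z_2\in A_k$ gives $(x,y)\in X_{A_k}$. Now for an arbitrary $z\in U$, pick $\ell$ with $z\in A_\ell$ and set $j:=\max(k,\ell)$. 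Using the monotonicity $A_k\subseteq A_j$ together with the already-noted inclusion $X_{A_k}\subseteq X_{A_j}$, we get $z\in A_j$ and $(x,y)\in X_{A_j}$. By construction of $A_{j+1}$ in (\ref{def_An-2}), the generalized conjugate $xA_jy$ lies inside $A_{j+1}$, so $xzy\in A_{j+1}\subseteq U$, as required.

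Combining the two directions yields the claimed equality. The only subtle point to watch is that the pair $(x,y)\in X_{A_k}$ does not guarantee $xA_ky\subseteq A_k$ (the $A_k$ are not individually invariant); invariance only emerges in the limit because each step of the construction absorbs the conjugates produced by the previous step, and any given $(x,y)\in X_U$ gets captured at some finite stage via the inclusions $X_{A_0}\subseteq X_{A_1}\subseteq\cdots$.
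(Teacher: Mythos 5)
Your proof is correct and follows essentially the same route as the paper's: show the union is a groupal submonoid via the ascending chain, and obtain invariance by capturing any $(x,y)\in X_U$ at a finite stage $X_{A_k}$ and absorbing the conjugate into $A_{j+1}$. If anything, you are more explicit than the paper on the minimality direction, which the paper asserts in one sentence while you supply the induction showing $A_n\subseteq S$ for every normal submonoid $S$ containing $A$.
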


\begin{proof}
Every normal submonoid of $M$ containing $A$ must contain $\mathrm{ncl}_M(A)$. Hence it is enough to see that $\mathrm{ncl}_M(A)$ is a normal submonoid. Let be $z,z'\in \mathrm{ncl}_M(A)$. Then $z\in A_n$ and $z'\in A_{n'}$ for some $n,n'\geq 0$. Since $A_n,A_{n'}\subseteq A_{\max\{n,n'\}}$ it follows that $z,z'\in A_{\max\{n,n'\}}$ and hence, $zz'\in A_{\max\{n,n'\}}\subseteq \mathrm{ncl}_M(A)$ because $A_{\max\{n,n'\}}$ is a submonoid. Moreover, if $z\in U(M)$ we also have $z^{-1}\in A_{\max\{n,n'\}}$ because it is a {\em groupal} submonoid. This proves that $\mathrm{ncl}_M(A)$ is a groupal submonoid of $M$. To prove that it is invariant, let us first observe that
\begin{equation}\label{X_A-barret}
X_{\mathrm{ncl}_M(A)}=\bigcup_{n\geq 0}X_{A_n}.
\end{equation}
Indeed, since $A_n\subseteq \mathrm{ncl}_M(A)$ we have  $X_{A_n}\subseteq X_{\mathrm{ncl}_M(A)}$ for each $n\geq 0$ and hence, $\bigcup_{n\geq 0}X_{A_n}\subseteq X_{\mathrm{ncl}_M(A)}$. Conversely, let $(x,y)\in X_{\mathrm{ncl}_M(A)}$. This means that there exist $t^*_1,t^*_2\in \mathrm{ncl}_M(A)$ such that $x t^*_1y=t^*_2$ and hence, some $n_1,n_2\geq 0$ such that $t^*_1\in A_{n_1}$, $t^*_2\in A_{n_2}$, and $xt^*_1y=t^*_2$. Since $A_{n_1},A_{n_2}\subseteq A_{\max\{n_1,n_2\}}$ it follows that $(x,y)\in A_{\max\{n_1,n_2\}}\subseteq\bigcup_{n\geq 0}X_{A_n}$. Let be now $(x,y)\in X_{\mathrm{ncl}_M(A)}$, and let us prove that $x\mathrm{ncl}_M(A)y\subseteq \mathrm{ncl}_M(A)$. Indeed, it follows from (\ref{X_A-barret}) that $(x,y)\in X_{A_{n_0}}$ for some $n_0\geq 0$ and hence, $(x,y)\in X_{A_n}$ for each $n\geq n_0$. Then for any element $t^*\in \mathrm{ncl}_M(A)$ we have $t^*\in A_{m_0}$ for some $m_0\geq 0$ and hence, $t^*\in A_m$ for each $m\geq m_0$. If $m_0\leq n_0$ we have $t^*\in A_{n_0}$ and $(x,y)\in X_{A_{n_0}}$, from which it follows that $xt^*y\in A_{n_0+1}\subseteq \mathrm{ncl}_M(A)$. Similarly, if $m_0>n_0$ we have $t^*\in A_{m_0}$ and $(x,y)\in X_{A_{m_0}}$, from which it follows that $xt^*y\in A_{m_0+1}\subseteq \mathrm{ncl}_M(A)$.  
\end{proof}

Notice that for a commutative monoid $M=(M,+,0)$ the generating set of $A_n$ for $n\geq 1$ can also be described by
\[
\bigcup_{(x,y)\in X_{A_{n-1}}}(x+A_{n-1}+y)=\bigcup_{z\in LX_{A_{n-1}}}(z+A_{n-1}),
\]
where $LX_A$ for any subset $A\subseteq M$ stands for the (left) {\em one-sided stability set} of $A$, i.e.
\[
LX_A:=\{z\in M\,|\,(z+A)\cap A\neq\emptyset\}.
\]
Thus if $z\in LX_A$ then $(z,0)\in X_A$ and conversely, if $(x,y)\in X_A$ then $x+y\in LX_A$. Hence the set in either side is indeed a subset of the set in the other side.  

\begin{ex}\label{clausures_normals_a_N}{\rm
For every $n_1,\ldots,n_k\geq 0$ with $k\geq 1$ the normal closure of $\{n_1,\ldots,n_k\}$ is
\[
\mathrm{ncl}_{\NN_+}(n_1,\ldots,n_k)=\left\{
\begin{array}{ll}
\langle n_1\rangle, & \mbox{if $k=1$,}
\\
\langle\gcd(n_1,\ldots,n_k)\rangle, & \mbox{if $k\geq 2$.}
\end{array}\right.
\]
Let us first prove the case $k=1$. We have to see that
\begin{equation}\label{clausura_normal_n}
\mathrm{ncl}_{\NN_+}(n)=\langle n\rangle
\end{equation}
for each $n\geq 0$. Since $U(M)=\{0\}$ we have $\{n\}_0=\langle n\rangle'=\langle n\rangle$. Now $LX_{\langle n\rangle}$ consists of the positive integers $p\in\NN$ such that $p+\langle n\rangle$ contains some multiple of $n$ and consequently, of those $p\geq 0$ such that $p\in\langle n\rangle$. Hence for each $p\in LX_{\langle n\rangle}$ we have $p+\langle n\rangle\subseteq\langle n\rangle$, and $\{n\}_1=\{n\}_0$, from which the claim readily follows. Let us now prove the cases $k\geq 2$ by induction on $k$. Let be $k=2$. If $n_1=n_2$ the result follows from (\ref{clausura_normal_n}). If $n_1\neq n_2$ we have
\[
\{n_1,n_2\}_0=\langle n_1,n_2\rangle=\{kn_1+ln_2,\,k,l\geq 0\}.
\]
Now, assuming that $n_1<n_2$ we have $n_2=qn_1+r_1$ for some $q\geq 1$ and $0<r_1<m$. Hence $r_1\in LX_{\langle n_1,n_2\rangle}$. Thus we have
\[
\{n_1,n_2\}_1=\langle(r_1+\langle n_1,n_2\rangle)\cup\cdots\rangle\supseteq\langle r_1,n_1\rangle.
\]
By the same argument applied to the subset $\{r_1,n_1\}$ it follows that
\[
\{n_1,n_2\}_2=\langle(r_2+\langle r_1,n_1\rangle)\cup\cdots\rangle\supseteq\langle r_2,r_1\rangle,
\]
where $r_2$ is the remainder of the euclidean division of $n_1$ by $r_1$. In particular, $r_2\in\{n_1,n_2\}_2$. After a finite number $p\geq 1$ of iterations we find that 
\[
\gcd(n_1,n_2)\in\{n_1,n_2\}_p\subseteq\mathrm{ncl}_{\NN_+}(n_1,n_2)
\]
and hence, $\langle\gcd(n_1,n_2)\rangle\subseteq\mathrm{ncl}_{\NN_+}(n_1,n_2)$. The equality follows because $\langle\gcd(n_1,n_2)\rangle$ is already a normal submonoid containing $n_1,n_2$. Let be now $k>2$, and let us assume that $\mathrm{ncl}_{\NN_+}(n_1,\ldots,n_{k-1})=\langle\gcd(n_1,\ldots,n_{k-1})\rangle$ for every $n_1,\ldots,n_{k-1}\geq 0$. Then we have
\begin{align*}
\mathrm{ncl}_{\NN_+}(n_1,\ldots,n_k)&=\mathrm{ncl}_{\NN_+}(\{n_1,\ldots,n_{k-1}\}\cup\{n_k\})
\\
&=\mathrm{ncl}_{\NN_+}(\mathrm{ncl}_{\NN_+}(n_1,\ldots,n_{k-1})\cup\mathrm{ncl}_{\NN_+}(n_k))
\\
&=\mathrm{ncl}_{\NN_+}(\langle\gcd(n_1,\ldots,n_{k-1})\rangle\cup\langle n_k\rangle)
\\
&=\mathrm{ncl}_{\NN_+}(\mathrm{ncl}_{\NN_+}(\gcd(n_1,\ldots,n_{k-1}))\cup\mathrm{ncl}_{\NN_+}(n_k))
\\
&=\mathrm{ncl}_{\NN_+}(\gcd(n_1,\ldots,n_{k-1}),n_k)
\\
&=\langle\gcd(\gcd(n_1,\ldots,n_{k-1}),n_k)\rangle
\\
&=\langle\gcd(n_1,\ldots,n_k)\rangle.
\end{align*}
}
\end{ex}

\begin{ex}\label{exemple_clausura_normal_a_Nmax}{\rm
For each $n\geq 0$ the normal closure of $\{n\}$ in $\NN_{\max}$ (see Example~\ref{N_max}) is given by
\begin{equation}\label{clausura_normal_a_N_max}
\mathrm{ncl}_{\NN_{\max}}(n)=\{0,1,\ldots,n\}.
\end{equation}
Indeed, since $U(\NN_{\max})=\{0\}$ we have 
\[
\{n\}_0=\langle n\rangle'=\langle n\rangle=\{0,n\}.
\]
Now it is easy to check that $LX_{\{0,n\}}=\{0,1,\ldots,n\}$ and hence,
\[
\{n\}_1=\{0,1,\ldots,n\}.
\]
Then (\ref{clausura_normal_a_N_max}) follows because this is already a normal submonoid. More generally, an easy induction on $k\geq 1$ shows that
\[
\mathrm{ncl}_{\NN_{\max}}(n_1,\ldots,n_k)=\{0,1,\ldots,\max(n_1,\ldots,n_k)\}.
\]
The details are left to the reader.
}
\end{ex}

\subsection{Normal, and normally simple monoids}
As a (groupal) submonoid of $M$, the group of units $U(M)$ is invariant if (and only if) for every pair $(x,y)\in M\times M$ for which there exists some $u\in U(M)$ such that $xuy\in U(M)$ we have $xu'y\in U(M)$ for each $u'\in U(M)$. There seems to be no reason by which this should be true for a generic monoid $M$. This suggests introducing the following definition.

\begin{defn}
A monoid $M$ is called {\em normal} when its group of units $U(M)$ is a normal submonoid.
\end{defn} 

\noindent
Clearly, every group, finite or not, is a normal monoid, as it is every monoid whose group of units is trivial, such as any free monoid, and the bicyclic monoid $\NN\boxtimes\NN$ of Example~\ref{monoide_biciclic}. Many more examples are provided by the next result.

\begin{prop}\label{U(M)_normal}
Let be $M$ a monoid. Then $M$ is normal in any of the following two cases:
\begin{itemize}
\item[(i)] $M$ is finite;
\item[(ii)] $M$ is commutative.
\end{itemize}
\end{prop}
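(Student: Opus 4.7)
The plan is to verify the invariance condition (\ref{condicio_invariant}) for the groupal submonoid $S = U(M)$, i.e., to show that whenever $(x,y) \in X_{U(M)}$ (meaning some $u \in U(M)$ satisfies $xuy \in U(M)$), one has $xu'y \in U(M)$ for every $u' \in U(M)$. In both cases the strategy is to promote the hypothesis ``$xuy$ is a unit for some unit $u$'' to the stronger statement ``$x$ and $y$ are individually units'' (or at least that $xy \in U(M)$), from which the conclusion is immediate because $U(M)$ is closed under multiplication.

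For the commutative case, I would use commutativity to rewrite $xuy = xyu$. Since $xyu \in U(M)$ and $u \in U(M)$, the subgroup property of $U(M)$ yields $xy = (xyu)\,u^{-1} \in U(M)$. Then for any other unit $u'$, we have $xu'y = xyu' \in U(M)$ as a product of two units, and invariance follows.

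For the finite case, I would set $w := (xuy)^{-1}$, so that $w(xuy) = 1$ and $(xuy)w = 1$. Rewriting these as $(wxu)\,y = 1$ and $x\,(uyw) = 1$ shows that $y$ has a left inverse and $x$ has a right inverse. The key lemma I would invoke is the standard fact that in a \emph{finite} monoid, any element with a one-sided inverse is in fact invertible: if $ab = 1$ in a finite monoid, then left multiplication $L_b : m \mapsto bm$ is injective (since $bm = bm'$ implies $m = abm = abm' = m'$), hence bijective by finiteness, so there exists $m_0$ with $bm_0 = 1$, and then $a = a(bm_0) = (ab)m_0 = m_0$, giving $ba = 1$. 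Applying this lemma to the identities $(wxu)y = 1$ and $x(uyw) = 1$ yields $x, y \in U(M)$. Consequently, for every $u' \in U(M)$, the element $xu'y$ is a product of three units and therefore lies in $U(M)$.

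I do not anticipate any genuine obstacle; the only nontrivial ingredient is the one-sided-inverse lemma used in (i), which is a well-known consequence of the pigeonhole principle applied to left translation in a finite monoid. The argument does not extend to infinite non-commutative monoids (as the bicyclic monoid $\NN\boxtimes\NN$ of Example~\ref{monoide_biciclic} illustrates, where one-sided inverses exist without two-sided ones, although there $U(M)$ happens to be trivial and hence still normal).
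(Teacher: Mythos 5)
Your proposal is correct. Part (ii) is essentially identical to the paper's argument: commute $u$ past $y$, cancel it to get $xy\in U(M)$, and conclude. Part (i) reaches the same intermediate conclusion as the paper --- namely $X_{U(M)}=U(M)\times U(M)$ --- but by a genuinely different route. The paper invokes the \emph{stability property} of finite monoids to establish that $U(M)=\{x\in M: MxM=M\}$ and that $M\setminus U(M)$ is a two-sided ideal (citing Steinberg's book for the details), and then observes that $xuy\in U(M)$ forces $x,y\in U(M)$ because otherwise $xuy$ would land in that ideal. You instead extract one-sided inverses directly from $w(xuy)=(xuy)w=1$, rewritten as $(wxu)y=1$ and $x(uyw)=1$, and then apply the elementary pigeonhole lemma that a one-sided inverse in a finite monoid is two-sided. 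Your argument is more self-contained and avoids the case split on whether $M$ is a group and the appeal to external structure theory; the paper's version situates the fact inside the standard ideal-theoretic picture of finite monoids (which it uses elsewhere, e.g.\ in the discussion of Malcev's theorem). The two key lemmas are in fact closely related --- your one-sided-inverse lemma readily implies that $M\setminus U(M)$ is an ideal --- so the proofs are two presentations of the same underlying finiteness phenomenon, with yours being the more economical for this isolated purpose.
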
 

\begin{proof}
(i) Let $M$ be a finite monoid which is not a group, i.e. such that $M\setminus U(M)\neq\emptyset$. Then the stability set of $U(M)$ is
\begin{equation}\label{X_U(M)}
X_{U(M)}=U(M)\times U(M),
\end{equation}
from which the statement readily follows. The inclusion $U(M)\times U(M)\subseteq X_{U(M)}$ is obvious. As to the reverse inclusion, it is ultimately a consequence of the so-called {\em stability property} of finite monoids, according to which for finite monoids $M$, and for every $x,y\in M$ the two following conditions are equivalent:
\begin{itemize}
\item[(1)] $MxM=MxyM$ (resp. $MxM=MyxM$),
\item[(2)] $xM=xyM$ (resp. $Mx=Myx$).
\end{itemize}
Using this equivalence, it is not difficult to see that
\[
U(M)=\{x\in M\,:\,MxM=M\},
\]
and that its complement $M\setminus U(M)$, nonempty by hypothesis, is a (two-sided) ideal of $M$; see for ex. \cite{Steinberg2016}, Section~1.3 for the details. Now, if $M\setminus U(M)$ is an ideal, and for some $x,y\in M$ there exists some $u\in U(M)$ such that $xuy\in U(M)$ we necessarily have $x,y\in U(M)$. Hence $X_{U(M)}=U(M)\times U(M)$. 

(ii) If $M$ is commutative, and $xuy\in U(M)$ for some $u\in U(M)$ it follows that $xy\in U(M)$ and hence, $xu'y=xyu'\in U(M)$ for each $u'\in U(M)$.
\end{proof}

\begin{prop}\label{subgrups_normals_U(M)}
Let $M$ be a normal monoid. Then every normal subgroup of $U(M)$ is a normal submonoid of $M$.  
\end{prop}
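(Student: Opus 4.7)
The plan is to verify the three defining properties of a normal submonoid for $H$. The first two are essentially immediate: since $H$ is a subgroup of $U(M)$, it contains $1$ and is closed under products (making it a submonoid of $M$) and closed under inverses in $M$ (making it groupal). The substance of the proof lies in showing the invariance condition.

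For invariance, I would take $(x, y) \in X_H$ and fix $h_0 \in H$ with $xh_0 y = h_1 \in H$; the task is then to show $xhy \in H$ for every $h \in H$. Since $H \subseteq U(M)$ we have $X_H \subseteq X_{U(M)}$, and the hypothesis that $M$ is normal (i.e.\ $U(M)$ is invariant in $M$) gives $xU(M)y \subseteq U(M)$; in particular $xy \in U(M)$ and $xHy \subseteq U(M)$. The idea is then to use the normality of $H$ in the group $U(M)$ to pin $xHy$ down to the coset $H$: since $xHy$ meets $H$ through $h_1 = xh_0 y$, it will suffice to show that $xHy$ lies in a single coset of $H$.

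Concretely, for each $h \in H$ I would verify that the quotient $(xhy)(xh_0 y)^{-1} = (xhy) \cdot h_1^{-1}$ lies in $H$, which then yields $xhy \in H h_1 = H$. When $x, y \in U(M)$ this quotient simplifies to the conjugate $x (h h_0^{-1}) x^{-1}$, which lands in $H$ by its normality in $U(M)$. In general $x$ and $y$ need not be units individually even though $xy$ is, and one works instead with the partial inverses that the identity $xh_0 y = h_1$ provides: $y$ admits the left inverse $h_1^{-1} x h_0$, and $xh_0$ admits the right inverse $y h_1^{-1}$. The hard part will be to combine these partial inverses with the normality of $H$ and the invariance of $U(M)$ to carry out the same coset computation cleanly, keeping all intermediate products inside $U(M)$ and tracking which of them actually land in $H$.
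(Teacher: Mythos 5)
The gap is the step you yourself flag as ``the hard part'': passing from $xHy\subseteq U(M)$ to $xHy\subseteq H$ when $x$ and $y$ are not units. That step is not merely hard --- it cannot be done, because the statement is false in general. Everything before it is fine: $H$ is groupal, $X_H\subseteq X_{U(M)}$ together with the normality of $M$ forces $xHy\subseteq U(M)$, and when $x,y\in U(M)$ your coset computation $xhy=\bigl(x(hh_0^{-1})x^{-1}\bigr)h_1\in Hh_1=H$ closes the argument. But when $x,y\notin U(M)$ the map $h\mapsto (xy)^{-1}xhy$ is just a self-map of $U(M)$ fixing $1$, and nothing forces it to preserve $H$. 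Concretely, let $G=\ZZ_2\times\ZZ_2=\{1,a,b,ab\}$, let $\theta$ be the automorphism swapping $a$ and $b$, and let $M=BR(G,\theta)=\NN\times G\times\NN$ be the Bruck--Reilly extension, with product $(m,g,n)(p,h,q)=(m+p-t,\ \theta^{p-t}(g)\,\theta^{n-t}(h),\ n+q-t)$ where $t=\min(n,p)$. Then $U(M)=\{(0,g,0)\}\cong G$, and $U(M)$ is invariant because the outer coordinates of $(m,g,n)(0,c,0)(p,h,q)$ do not depend on $c$; so $M$ is a normal monoid. Yet for $x=(0,1,1)$ and $y=(1,1,0)$ one has $xy=(0,1,0)=1$ while $x(0,a,0)y=(0,\theta(a),0)=(0,b,0)$, so the normal subgroup $\{(0,1,0),(0,a,0)\}\cong\{1,a\}$ of $U(M)$ meets $xHy$ without containing it: it is not an invariant submonoid of $M$.

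For what it is worth, the paper's own proof stops one step \emph{earlier} than yours: it derives $xhy\in U(M)$ for every $h\in H$ and ends there, tacitly treating $xHy\subseteq U(M)$ as if it were $xHy\subseteq H$. Your reduction to the coset condition $(xhy)h_1^{-1}\in H$ correctly isolates the missing content. Moreover, the case you did settle is exactly the one the paper needs: its only application of this proposition is to the finite monoids $T_n$, and for finite $M$ the stability property gives $X_{U(M)}=U(M)\times U(M)$ (see the proof of Proposition~\ref{U(M)_normal}), so every $(x,y)\in X_H$ automatically has $x,y\in U(M)$ and your conjugation argument is a complete proof. The statement should therefore carry an extra hypothesis guaranteeing $X_{U(M)}=U(M)\times U(M)$ (e.g.\ $M$ finite, or $M\setminus U(M)$ an ideal); under such a hypothesis your proposal, with the deferred general case deleted, is correct.
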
 

\begin{proof}
Every subgroup $H$ of $U(M)$ is trivially a groupal submonoid of $M$. Let us assume that $H$ is a normal subgroup of $U(M)$, and that for some $x,y\in M$ we have $(xHy)\cap H\neq\emptyset$. Since $H\subseteq U(M)$, we also have $(xU(M)y)\cap U(M)\neq\emptyset$ and hence, $xU(M)y\subseteq U(M)$ because $U(M)$ is a normal submonoid of $M$. Then for every $h\in H$ we have $xhy\in U(M)$.   
\end{proof}

In general, a normal monoid $M$ will have normal submonoids other than itself, and the normal subgroups of $U(M)$. For instance, this is so for the monoid $\NN_+$ (cf. Proposition~\ref{submonoides_invariants_de_N}), and for the monoids in Examples~\ref{N_max} and \ref{monoide_biciclic}.

\begin{defn}
A normal monoid $M$ is called {\em normally simple} if its only normal submonoids are $M$, and the normal subgroups of $U(M)$.
\end{defn}

Clearly, every group is a normally simple monoid. An important family of normally simple noncommutative monoids which are not groups is the family of full transformation monoids $T_n:=End(\nbf)$ for each $n\geq 1$, where $\nbf:=\{1,\ldots,n\}$. 

\begin{prop}\label{Tn_normalment_simple}
$T_n$ is normally simple for each $n\geq 1$.
\end{prop}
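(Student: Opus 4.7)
The plan is to show that any normal submonoid $S\lhd T_n$ which is not contained in $S_n$ must equal all of $T_n$. Combined with the observation that $S\cap S_n$ is automatically a normal subgroup of $S_n$ --- since $1\in S$ gives $(\sigma,\sigma^{-1})\in X_S$ and hence $\sigma S\sigma^{-1}\subseteq S$ for every $\sigma\in S_n$, so in particular $\sigma(S\cap S_n)\sigma^{-1}\subseteq S\cap S_n$ --- this settles the proposition. The case $n=1$ is trivial as $T_1=\{1\}$, so I will assume $n\geq 2$.

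I would split the argument into two reductions. \emph{Reduction 1:} if some constant map $c_j$ lies in $S$, then $S=T_n$. Indeed $S_n$-conjugation immediately gives $\sigma c_j\sigma^{-1}=c_{\sigma(j)}\in S$ for every $\sigma$, so every constant lies in $S$; and for an arbitrary $g\in T_n$ we have $c_j\cdot g=c_j\in S$, which witnesses $(1,g)\in X_S$. Invariance then yields $Sg\subseteq S$, so in particular $g=1\cdot g\in S$.

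\emph{Reduction 2:} if $S$ contains any non-bijection $f$, then $S$ contains a constant. I would prove this by an iterative rank reduction. Let $A=\mathrm{im}(f)$ and $k=|A|=\mathrm{rank}(f)$. If $k=1$ there is nothing to do. If $k\geq 2$, then since $k<n$ some fiber $f^{-1}(c)$ contains two distinct elements $a,b$; because $|A|=k\geq 2$, I can pick $\sigma\in S_n$ with $\sigma(A)\supseteq\{a,b\}$. The image of the product $f\sigma f\sigma^{-1}$ is exactly $f(\sigma(A))$, and because $f$ fails to be injective on $\sigma(A)$ (the elements $a$ and $b$ collide under $f$), this image has strictly fewer than $k$ elements. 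The product $f\sigma f\sigma^{-1}$ lies in $S$ since it is a product of $f$ with its $S_n$-conjugate $\sigma f\sigma^{-1}$, both in $S$. Iterating drives the rank down to $1$, producing a constant in $S$.

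The main obstacle I anticipate is not the rank-reduction computation itself but spotting the correct witnesses for the invariance condition at each step. The entire argument rests on just two such witnesses: $s=1$, used to deduce $\sigma S\sigma^{-1}\subseteq S$ and obtain the $S_n$-conjugation closure, and $s=c_j$, used to sweep all of $T_n$ out of the ideal of constants. Turning the abstract invariance axiom into these concrete witnesses is what makes the argument go through.
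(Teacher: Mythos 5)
Your argument is correct, but it follows a genuinely different route from the paper's. The paper proves the proposition by computing the normal closure $\mathrm{ncl}_{T_n}(f)$ of each singleton explicitly through the recursive chain $\{f\}_0\subseteq\{f\}_1\subseteq\cdots$ of \S\ref{clausura_normal}; for a non-invertible $f$ of rank $k\geq 2$ it lowers the rank by exhibiting a pair $(g,h)$ with $gfh=f$, so that the element $gh$ lies in $\{f\}_1$ and has rank $<k$ --- a construction that forces a case split according to whether the map $\alpha$ induced by $f$ on its image is a bijection. You instead argue directly about an arbitrary normal submonoid $S$, extracting from the invariance axiom only two concrete witnesses: $(\sigma,\sigma^{-1})\in X_S$ (since $\sigma 1\sigma^{-1}=1\in S$), which gives closure of $S$ under $S_n$-conjugation, and $(1,g)\in X_S$ once a constant is known to lie in $S$, which sweeps in all of $T_n$. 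Your rank reduction uses the product $f\cdot\sigma f\sigma^{-1}$ of two elements of $S$ --- exploiting that $S$ is a submonoid closed under $S_n$-conjugation --- rather than a generalized conjugate $gfh$, and this makes the case analysis on $\alpha$ unnecessary and shortens the argument considerably. What the paper's longer route buys is the explicit formula (\ref{ncl(f)}) for $\mathrm{ncl}_{T_n}(f)$, a finer piece of information than normal simplicity alone; what yours buys is economy. Two points you leave implicit and should state: that $T_n$ is a normal monoid in the first place (it is finite, so Proposition~\ref{U(M)_normal} applies), which is needed for ``normally simple'' to be well posed, and that conversely every normal subgroup of $S_n$ really is a normal submonoid of $T_n$ (Proposition~\ref{subgrups_normals_U(M)}), so that the list of normal submonoids you arrive at is actually attained.
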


\begin{proof}
$T_n$ is a normal monoid because it is finite. To prove that it is normally simple it is enough to see that for each $f\in T_n$ the normal closure of $\{f\}$ in $T_n$ is given by
\begin{equation}\label{ncl(f)}
\mathrm{ncl}_{T_n}(f)=\left\{
\begin{array}{ll}
\mathrm{ncl}_{S_n}(f), & \mbox{if $f\in S_n$,}
\\
T_n, & \mbox{if $f\notin S_n$}
\end{array}\right.
\end{equation}
where $\mathrm{ncl}_{S_n}(f)$ stands for the normal closure of $\{f\}$ in the symmetric group $S_n$. Indeed, let this be true, and let be $S$ a normal submonoid of $T_n$. If $S\setminus S_n\neq\emptyset$, and $f\in S\cap S_n$ it follows from (\ref{ncl(f)}) that $T_n=\mathrm{ncl}_{T_n}(f)\subseteq S$ and hence, $S=T_n$. Otherwise, we have $S\subseteq S_n$, and being a normal submonoid of $T_n$, $S$ is also a normal submonoid of $S_n$ and hence, a normal subgroup of $S_n$.

Let us prove (\ref{ncl(f)}). We need to compute the sequence of groupal submonoids $\{f\}_0,\{f\}_1,\ldots$. Let us first consider the case where $f$ is a permutation. By definition we have
\[
\{f\}_0=\langle f\rangle'=\{f^p,\,p\in\ZZ\}.
\]
Then the stability set of $\{f\}_0$ consists of all pairs $(g,h)\in T_n\times T_n$ such that $gf^ph=f^q$ for some $p,q\in\ZZ$. Since $f$ is a bijection, $h$ must be injective, and $g$ surjective. Being endomorphisms of $\nbf$, it follows that $g,h\in S_n$. Hence we can take as $g$ any permutation in $S_n$, and $h=f^qg^{-1}f^{-p}$, i.e.
\[
X_{\{f\}_0}=\{(\tau,f^q\tau^{-1}f^{-p}),\,\tau\in S_n,\,p,q\in\ZZ\}.
\]
In particular, $(\tau,\tau^{-1})\in X_{\{f\}_0}$ for each $\tau\in S_n$. Thus
\[
\mathrm{ncl}_{T_n}(f)\supseteq\{f\}_1=\langle\bigcup_{(g,h)\in X_{\{f\}_0}}g\{f\}_0h\ \rangle'\supseteq\langle \bigcup_{\tau\in S_n}\tau\{f\}_0\tau^{-1}\,\rangle'\supseteq\mathrm{ncl}_{S_n}(f).
\]
But $\mathrm{ncl}_{S_n}(f)$ is already a normal submonoid of $T_n$ containing $f$ because of Proposition~\ref{subgrups_normals_U(M)}. Therefore $\mathrm{ncl}_{T_n}(f)=\mathrm{ncl}_{S_n}(f)$ when $f$ is invertible.

Let now be $f$ a non-invertible endomorphism. To prove that $\mathrm{ncl}(f)=T_n$ we proceed by induction on the rank $k<n$ of $f$. If $k=1$ there exists $i\in\nbf$ such that $f=c_i$, the constant function mapping all of $\nbf$ to $i$. Now,
\[
\{c_i\}_0=\{id_{\nbf},c_i\},
\]
and for every $h\in T_n$ we have
$c_ih=c_i$, i.e. $(id_{\nbf},h)\in X_{\{c_i\}_0}$. It follows that for every $h\in T_n$ we have
\[
h\in id_{\nbf}\{c_i\}_0h\subseteq\{c_i\}_1\subseteq\mathrm{ncl}_{T_n}(f),
\]
i.e. $\mathrm{ncl}_{T_n}(f)=T_n$. Let us now assume that $ncl_{T_n}(\phi)=T_n$ for every endomorphism $\phi\in T_n$ of rank $l\in\{1,\ldots,k-1\}$, with $1<k<n$. Since $f$ is not invertible we have
\[
\{f\}_0=\{f^p,\,p\geq 0\}.
\]
Let be $\mathrm{Im}f=\{i_1<i_2<\cdots<i_k\}\subset\nbf$, and $\{A_1,\ldots,A_k\}$ the partition of $\nbf$ such that $f^{-1}(i_j)=A_j$. Thus $f=\varepsilon\eta$, with $\eta:\nbf\to\{i_1,\ldots,i_k\}$ given by $A_j\mapsto i_j$, and $\varepsilon:\{i_1,\ldots,i_k\}\to\nbf$ the canonical inclusion. Then the powers of $f$ can be described in terms of the map $\alpha:\{i_1,\ldots,i_k\}\to\{i_1,\ldots,i_k\}$ defined by
\[
\alpha(i_j)=i_{j'},
\]
where $j'$ is the unique element in $\{1,\ldots,k\}$ such that $i_j\in A_{j'}$. Thus $f^2$ is given by $A_j\overset{f}{\mapsto} i_j\overset{f}{\mapsto} i_{j'}$, i.e. we have $f^2=\varepsilon\alpha\eta$. More generally, for each $p\geq 1$ we have
\[
f^p=\varepsilon\alpha^{p-1}\eta.
\]
The map $\alpha$ can be bijective or not. If it is not bijective, it follows that the powers of $f$ all have rank less than $k$. But if $\{f\}_0$ contains a map $\phi$ of rank $l<k$ it follows by the induction hypothesis that $T_n=\mathrm{ncl}_{T_n}(\phi)$, and hence $\mathrm{ncl}_{T_n}(f)=T_n$ because $\mathrm{ncl}_{T_n}(\phi)\subseteq\mathrm{ncl}_{T_n}(f)$. It remains to prove that the same is true when $\alpha$ is not bijective. In this case, all powers of $f$ are of rank $k$. However, we claim that when $\alpha$ is not bijective $\{f\}_1$ contains maps of rank less than $k$, so that the same argument can be applied. Indeed, since $f$ is not a bijection there is some $j\in\{1,\ldots,k\}$ such that $A_j$ contains more than one element. Without loss of generality, let us assume that $|A_1|>2$. Moreover, since $\alpha$ is a bijection, each $i_j$ belongs to a different $A_{j'}$ and hence, there is some $a_1\in A_1$ such that $a_1\notin\{i_1,\ldots,i_k\}$. Then let us consider the pair $(g,h)\in T_n\times T_n$ given as follows:
\begin{itemize}
\item[(i)] $h$ maps all of $A_1$ to $a_1$, and each $A_j$ for $j\in\{2,\ldots,k\}$ to any $a_j\in A_j$;
\item[(ii)] $g$ acts as the identity on $\{i_1,\ldots,i_k\}$, and collapses $\nbf\setminus\{i_1,\ldots,i_k\}$ to $i_2$ (in particular, we have $g(a_1)=i_2$).
\end{itemize}
Then we have $gfh=f$, i.e. $(g,h)\in X_{\{f\}_0}$ and hence, $gh\in\{f\}_1$. However $gh$ is of rank at most $k-1$ because even if $\{a_2,\ldots,a_k\}=\{i_2,\ldots,i_k\}$ we have $a_1$ which is also mapped to $i_2$.
\end{proof}

\subsection{Lattice of normal submonoids, and modularity}
As the set of normal subgroups of a group, the set $\mathsf{NorSub}(M)$ of normal submonoids of a monoid $M$ is a complete lattice when ordered by inclusion, with meets and joins respectively given by
\begin{align*}
\bigwedge_{i\in I}S_i&=\bigcap_{i\in I}S_i,
\\
\bigvee_{i\in I}S_i&=\mathrm{ncl}_M\left(\bigcup_{i\in I}S_i\right)
\end{align*}
for every family of normal submonoids $\{S_i\}_{i\in I}$. In general, computing this lattice for a given monoid $M$ may be quite difficult, if possible at all, or the resulting lattice may be a complex one. In some cases, however, it is just a (finite or infinite) chain.

\begin{ex}\label{exemple_reticle_submonoides_normals}{\rm
Let us write $[n]:=\{0,1,\ldots,n\}$ for each $n\geq 0$. Then the lattice of normal submonoids of $\NN_{\max}$ is the infinite chain
\[
[0]\subset[1]\subset\cdots\subset[n]\subset\cdots\NN.
\]
Indeed, let $S$ be a normal submonoid of $\NN_{\max}$. If $S$ is a bounded set, and $n$ is its maximum, we have $S=[n]$ because $\mathrm{ncl}_{\NN_{\max}}(n)=[n]$ (cf. Example~\ref{exemple_clausura_normal_a_Nmax}). Otherwise for each $k\geq 0$ there is some element $n\in S$ with $n>k$ because $S$ is not bounded. Therefore $k\in\mathrm{ncl}_{\NN_{\max}}(n)\subseteq S$, and $S=\NN$.  
}
\end{ex}

\begin{ex}\label{reticle_submonoides_normals_Tn}{\rm
It readily follows from Proposition~\ref{Tn_normalment_simple}, together with the simplicity of the alternating groups $A_n$ for each $n\geq 5$ that the lattice of normal submonoids of $T_n$ is given by the finite chain
\begin{align*}
\{1\}\subset S_2\subset T_2,\quad &\mbox{if $n=2$,}
\\
\{1\}\subset K_4\subset A_4\subset S_4\subset &T_4,\quad \mbox{if $n=4$,}
\\
\{1\}\subset A_n\subset S_n\subset T_n,&\quad \mbox{if $n\neq 2,4$.}
\end{align*}
with $K_4=\{id,(12)(34),(13)(24),(14)(32)\}$ the Klein permutation four-group. }
\end{ex}

\begin{defn}
A monoid $M$ is called {\em modular} if the lattice of normal submonoids $\mathsf{NorSub}(M)$ is modular (i.e. if $S_1\vee(S_2\wedge S_3)=(S_1\vee S_2)\wedge S_3$ for every normal submonoids $S_1S_2,S_3$ of $M$ such that $S_1\subseteq S_3$).
\end{defn}

As it is well known, every group is a modular monoid. Thus the question arises whether every monoid is also modular, and in case it is not, determining what monoids, or families of monoids, are modular. The standard proof of the modularity of $\mathsf{NorSub}(G)$ for a group $G$ makes use of the fact that the join of two normal subgroups is nothing but their product (see, for instance, \cite{Jacobson1985-I}, Theorem~8.3). However, as a matter of fact the equality $S\vee S'=SS'$ for every normal submonoids $S,S'$ of a monoid $M$ is not necessary for $\mathsf{NorSub}(M)$ to be modular. Actually, there are modular monoids for which this equality does not hold for each $S,S'$. Even more, $SS'$ need not be a normal submonoid at all when $M$ is not a group.

\begin{ex}\label{modularitat_N+}{\rm
Let be $M=\NN_+$. If $S=\langle 2\rangle$, and $S'=\langle 3\rangle$ the product $SS'$, more properly denoted by $S+S'$ in this case, is $\langle 2\rangle+\langle 3\rangle=\NN\setminus\{1\}$, which is not a normal submonoid of $\NN_+$. Hence $\langle 2\rangle\vee\langle 3\rangle\neq\langle 2\rangle+\langle 3\rangle$. In spite of that, $\NN_+$ is modular. Indeed, let be $S_1,S_2,S$ normal submonoids of $\NN_+$, with $S_1\subseteq S_2$. We have to see that $(S_1\vee S)\wedge S_2=S_1\vee(S\wedge S_2)$. Now, it follows from Proposition~\ref{submonoides_invariants_de_N} that $S_1=\langle nq\rangle$, $S_2=\langle n\rangle$, and $S=\langle m\rangle$ for some $m,n,q\geq 0$. Moreover, from Example~\ref{clausures_normals_a_N} we know that
\begin{align*}
(\langle nq\rangle\vee\langle m\rangle)\wedge\langle n\rangle&=\langle\gcd(nq,m)\rangle\cap\langle n\rangle=\langle\lcm(\gcd(nq,m),n)\rangle
\\
\langle nq\rangle\vee(\langle m\rangle\wedge\langle n\rangle)&=\langle nq\rangle\vee\langle\lcm(m,n)\rangle=\langle\gcd(nq,\lcm(m,n)\rangle.
\end{align*}
Hence we just need to see that $\lcm(\gcd(nq,m),n)=\gcd(nq,\lcm(m,n))$, and this is a consequence of the general fact that $\gcd(a,\lcm(b, c)) =\lcm(\gcd(a, b),\gcd(a, c))$ for each $a,b,c$. }
\end{ex}

As shown by this example, the modularity of a monoid $M$ is a more subtle question than just knowing if the equality $S\vee S'=SS'$ holds for every normal submonoids $S,S'$ of $M$. Next result gives an infinite family of monoids $M$ such that $\mathsf{NorSub}(M)$ is modular, a family that generalizes the monoid in Example~\ref{modularitat_N+}.

\begin{thm}\label{monoides_modulars}
Let be $M=(M,+,0)$ a cancellative commutative monoid such that
\[
\mbox{(*)\ \ \ $\forall x,y\in M,\ \exists z\in M$ such that $x=y+z$ or $y=x+z$,}.
\]
Then $\mathsf{NorSub}(M)$ is isomorphic to the lattice $\mathsf{Sub}(\widehat{M})$ of (normal) subgroups of the Grothendieck group $\widehat{M}$ of $M$. In particular, every cancellative commutative monoid satisfying (*) is modular.
\end{thm}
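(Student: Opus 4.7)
The plan is to construct an explicit order-preserving bijection between $\mathsf{NorSub}(M)$ and $\mathsf{Sub}(\widehat{M})$, which will automatically be a lattice isomorphism by the standard argument recalled in the introduction. Since $M$ is cancellative and commutative, the canonical map $M \to \widehat{M}$ is injective, so I identify $M$ with its image. The first step is to extract from condition~(*) the decomposition $\widehat{M} = M \cup (-M)$: any $\xi \in \widehat{M}$ has the form $\xi = x - y$ with $x, y \in M$, and (*) supplies $z \in M$ with either $x = y + z$, in which case $\xi = z \in M$, or $y = x + z$, in which case $\xi = -z \in -M$.

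I will then define $\Phi : \mathsf{NorSub}(M) \to \mathsf{Sub}(\widehat{M})$ by $\Phi(S) = S - S$ (the subgroup of $\widehat{M}$ generated by $S$) and $\Psi : \mathsf{Sub}(\widehat{M}) \to \mathsf{NorSub}(M)$ by $\Psi(H) = H \cap M$. Both maps are manifestly inclusion-preserving. For $\Psi$ to be well defined, I must check that $H \cap M$ is a normal submonoid: it is obviously a submonoid containing $0$; it is groupal because any unit $x \in H \cap M$ has its inverse $-x$ in $H$ (a subgroup) and in $M$ (as $x \in U(M)$); and it is invariant because, given $x, y \in M$ and $s, s' \in H \cap M$ with $x + s + y = s'$, one has $x + y = s' - s \in H$, so for every $s'' \in H \cap M$ the element $x + s'' + y = (x + y) + s''$ lies in both $H$ and $M$, i.e.\ in $\Psi(H)$.

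The crux is verifying that the two composites are the identity. For $\Phi \circ \Psi = \mathrm{id}$, which is where condition~(*) enters decisively: for $h \in H$ the decomposition $\widehat{M} = M \cup (-M)$ forces either $h \in H \cap M$ (whence $h = h - 0 \in \Phi(\Psi(H))$) or $-h \in H \cap M$ (whence $h = 0 - (-h) \in \Phi(\Psi(H))$). For $\Psi \circ \Phi = \mathrm{id}$, the nontrivial inclusion is $\Phi(S) \cap M \subseteq S$: an element $x \in \Phi(S) \cap M$ satisfies $x + s_2 = s_1$ in $M$ for some $s_1, s_2 \in S$, so $s_1 \in (x + S + 0) \cap S$ is nonempty, and the invariance of $S$ then yields $x + S \subseteq S$; taking the summand $0 \in S$ gives $x \in S$.

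Finally, once $\Phi$ and $\Psi$ are shown to be mutually inverse inclusion-preserving bijections, they automatically preserve joins and meets, giving the claimed lattice isomorphism. The modularity assertion then follows at once from the well-known modularity of the subgroup lattice of any abelian group, applied to $\widehat{M}$. I expect the main obstacle to be the invariance step in $\Phi(S) \cap M \subseteq S$: this is precisely where the \emph{full} invariance axiom (the three-term equivalence, rather than mere groupality or stability under conjugation by units) is indispensable, and conceptually it explains why invariance was defined the way it was in Section~2.
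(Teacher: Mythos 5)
Your proposal is correct and follows essentially the same route as the paper: your $\Phi(S)=S-S$ and $\Psi(H)=H\cap M$ are exactly the paper's $S\mapsto\widehat{S}$ and $H\mapsto n(H)$ once $M$ is identified with its image in $\widehat{M}$, and the three key verifications (invariance of $H\cap M$ via the stability set, the use of (*) to get $\widehat{M}=M\cup(-M)$ for surjectivity, and invariance of $S$ to get $\Phi(S)\cap M\subseteq S$) match the paper's steps one for one. The only cosmetic difference is that you phrase injectivity as $\Psi\circ\Phi=\mathrm{id}$ rather than proving it directly, which is equivalent.
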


\begin{proof}
Since $M$ is assumed to be cancellative, $\widehat{M}$ is given (up to isomorphism) by the quotient of the product monoid $M\times M$ modulo the congruence relation
\[
(m,n)\sim(m',n')\ \Leftrightarrow\ m+n'=m'+n.
\]
We shall denote by $[m,n]$ the equivalence class of $(m,n)$ in $\widehat{M}$. Then for every normal submonoid $S\lhd M$ let be $\widehat{S}$ the subgroup of $\widehat{M}$ given by
\[
\widehat{S}:=\{u\in\widehat{M}\,|\,\mbox{$u=[s,s']$ for some $(s,s')\in S\times S$}\}.
\]
The notation is justified by the fact that this subgroup is indeed isomorphic to the Grothendieck group of $S$. This is again a consequence of the cancellative character of $M$, which ensures that $(s,s'),(t,t')\in S\times S$ are equivalent in the Grothendieck group of $S$ if and only if they are equivalent in $\widehat{M}$. Then let be $f:\mathsf{NorSub}(M)\to\mathsf{Sub}(\widehat{M})$ the map defined by $S\mapsto\widehat{S}$.

We claim that $f$ is a lattice isomorphism. It is clearly order-preserving. Moreover, it is injective. Thus let be $S_1,S_2\lhd M$ such that $\widehat{S}_1=\widehat{S}_2$, and let be $x\in S_1$. Then $[x,0]\in\widehat{S}_1$ and hence, we also have $[x,0]\in\widehat{S}_2$. But this means that there exists a pair $(s_2,s'_2)\in S_2\times S_2$ such that $[x,0]=[s_2,s'_2]$, i.e. such that $x+s'_2=s_2$. It follows that $x$ belongs to the left stability set of $S_2$ and consequently, $x=x+0\in S_2$ because $S_2$ is invariant. This proves that $S_1\subseteq S_2$. A similar argument proves that $S_2\subseteq S_1$ and hence, $S_1=S_2$. To prove it is surjective, let be
\[
n(H):=\{x\in M\,:\,[x,0]\in H\}
\]
for every (normal) subgroup $H$ of $\widehat{M}$. We claim that $n(H)$ is a normal submonoid of $M$ such that
\begin{equation}\label{w(n(H))=H}
\widehat{n(H)}=H.
\end{equation}
Indeed, it is a groupal submonoid of $M$ because $H$ is a subgroup of $\widehat{M}$. Moreover, let be $z\in LX_{n(H)}$ so that $z+x\in n(H)$ for some $x\in n(H)$. This means that $[z+x,0],[x,0]\in H$ and hence,
\[
[z,0]=[z+x,x]=[z+x,0]+[0,x]=[z+x,0]-[x,0]
\]
is also in $H$. Therefore $z\in n(H)$ and consequently, $z+n(H)\subseteq n(H)$, i.e. $n(H)$ is invariant. Let us now prove (\ref{w(n(H))=H}). The inclusion $\widehat{n(H)}\subseteq H$ holds even if $M$ does not satisfy (*). Thus if $u\in\widehat{n(H)}$ then $u=[x,x']$ for some $x,x'\in n(H)$ and hence, such that $[x,0],[x',0]\in H$. Then we have
\[
u=[x,0]+[0,x']=[x,0]-[x',0]\in H
\]
because $H$ is a subgroup. It is to prove the reverse inclusion that condition (*) is needed. Thus let be $h\in H$, i.e. $h=[y,y']$ for some $y,y'\in M$. By condition (*) we have either $y=y'+z$ or $y'=y+z$ for some $z\in M$. In the first case we have
\[
h=[y'+z,y']=[z,0]\in H
\]
and hence, $z\in n(H)$ and $h=[z,0]\in\widehat{n(H)}$. Similarly, in the second case we have
\[
h=[y,y+z]=[0,z]=-[z,0]\in H
\]
and hence, also $[z,0]\in H$ because $H$ is a subgroup. Consequently, $z\in n(H)$ and again $h\in\widehat{n(H)}$. This proves that $f$ is a bijective order-preserving map. Since the inverse map $H\mapsto n(H)$ is also order-preserving, it follows that $f$ is a lattice isomorphism. Last assertion follows then from the modularity of the lattice of normal subgroups of a group.
\end{proof}

\begin{ex}\label{exemple_monoides_modulars}{\rm
Every free commutative monoid is modular. Indeed, every free commutative monoid is cancellative and satisfies condition (*). More generally, let us recall that by a saturated submonoid $S$ of a commutative monoid $M$ it is meant a submonoid such that for every $s_1,s_2\in S$ and $x\in M$ such that $s_1=s_2+x$ we have $x\in S$. Then every saturated submonoid of a free commutative monoid is also a cancellative commutative monoid satisfying (*) and hence, modular. Even more generally, every Krull monoid is modular, where by a Krull monoid it is meant a monoid isomorphic to $A\times S$ for some abelian group $A$, and some saturated submonoid $S$ of a free commutative monoid. 
}
\end{ex}

Let us emphasize that condition (*) is needed to prove that $f$ is onto with inverse map given by $H\mapsto n(H)$. For a generic cancellative commutative monoid we have the following weaker version of Theorem~\ref{monoides_modulars}.

\begin{thm}
For every concellative commutative monoid $M$, $\mathsf{NorSub}(M)$ is isomorphic, as join semilattice, to a join subsemilattice of $\mathsf{Sub}(\widehat{M})$.
\end{thm}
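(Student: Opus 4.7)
The plan is to reuse the map $f\colon\mathsf{NorSub}(M)\to\mathsf{Sub}(\widehat{M})$, $S\mapsto\widehat{S}:=\{[s,s']\,:\,s,s'\in S\}$, introduced in the proof of Theorem~\ref{monoides_modulars}, and to show that, even without hypothesis~(*), this $f$ is an injective join-semilattice morphism. The image $f(\mathsf{NorSub}(M))$ is then automatically a join subsemilattice of $\mathsf{Sub}(\widehat{M})$, and $f$ restricts to the desired isomorphism.

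Order-preservation is immediate, and $\widehat{S}$ is a subgroup for any submonoid $S$ (it contains $[0,0]$, is closed under sums because $S$ is, and $-[s,s']=[s',s]\in\widehat{S}$). For injectivity, I would repeat verbatim the argument given for Theorem~\ref{monoides_modulars}: that argument uses only the cancellativity of $M$ (to describe equivalence in $\widehat{M}$) together with the invariance of the normal submonoids (to conclude from $x\in LX_{S_2}$ that $x=x+0\in S_2$), and nowhere invokes~(*).

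The new content is preservation of binary joins. The inclusion $\widehat{S}_1+\widehat{S}_2\subseteq\widehat{S_1\vee S_2}$ is immediate since $\widehat{S_1\vee S_2}$ is a subgroup containing $\widehat{S}_1\cup\widehat{S}_2$. For the converse, I would use the explicit recursive description $S_1\vee S_2=\mathrm{ncl}_M(S_1\cup S_2)=\bigcup_{n\geq 0}A_n$ from Proposition~\ref{descripcio_clausura_normal} together with the commutative reformulation of the generating set of $A_n$ as $\bigcup_{z\in LX_{A_{n-1}}}(z+A_{n-1})$, and prove by induction on $n$ that $\widehat{A}_n\subseteq\widehat{S}_1+\widehat{S}_2$. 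In the base case $A_0=\langle S_1\cup S_2\rangle'$, any $x\in A_0$ yields $[x,0]$ equal to a finite signed sum of elements $[t_i,0]$ with $t_i\in S_1\cup S_2$, each lying in $\widehat{S}_1\cup\widehat{S}_2$; hence $[x,0]\in\widehat{S}_1+\widehat{S}_2$, and $[x,x']=[x,0]-[x',0]$ does too. For the inductive step, the crucial observation is that any $z\in LX_{A_{n-1}}$ admits some $a'\in A_{n-1}$ with $z+a'\in A_{n-1}$, so $[z,0]=[z+a',a']\in\widehat{A}_{n-1}$; combined with $[a,0]\in\widehat{A}_{n-1}$ for every $a\in A_{n-1}$, this gives $[z+a,0]\in\widehat{A}_{n-1}\subseteq\widehat{S}_1+\widehat{S}_2$ by induction, and $\widehat{A}_n\subseteq\widehat{S}_1+\widehat{S}_2$ follows by closure of the right-hand side under $+$ and negation. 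Finally, any $[s,s']\in\widehat{S_1\vee S_2}$ has $s,s'\in A_k$ for a common $k$, hence $[s,s']\in\widehat{A}_k\subseteq\widehat{S}_1+\widehat{S}_2$.

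The main obstacle is conceptual rather than computational: without~(*) there is no candidate inverse $H\mapsto n(H)$ available to shortcut the argument, so join preservation must be proved directly via the recursive description of the normal closure. The one technical insight that makes the induction work is the observation that membership of $z$ in $LX_{A_{n-1}}$ forces $[z,0]\in\widehat{A}_{n-1}$, which is precisely what allows the inductive hypothesis to propagate through the layers $A_n$.
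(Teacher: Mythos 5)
Your proposal is correct, and it follows the same overall route as the paper: the same map $f(S)=\widehat{S}$, the same reduction to proving $\widehat{S_1\vee S_2}\subseteq\widehat{S}_1+\widehat{S}_2$, and the same induction over the layers $A_n$ of the normal closure from Proposition~\ref{descripcio_clausura_normal}. (Your remark that the injectivity argument from Theorem~\ref{monoides_modulars} uses only cancellativity and invariance, not condition~(*), matches what the paper tacitly assumes when it reuses ``the above injective map.'') Where you genuinely diverge is in the form of the inductive statement. The paper works with explicit representatives: it shows by induction that every pair $(t,t')$ with entries in $(S_1\cup S_2)_n$ is $\sim$-equivalent to a pair $(s_1+s_2,s_1'+s_2')$, and the inductive step requires separating positive and negative terms into blocks $A,B,A',B'$, adding $((-B)+(-B'),(-B)+(-B'))$ to both coordinates, and then invoking witnesses $w,w'$ with $y+w,y'+w'\in(S_1\cup S_2)_{n-1}$ to land back in the previous layer. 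You instead prove the cleaner statement $\widehat{A}_n\subseteq\widehat{S}_1+\widehat{S}_2$ directly inside the Grothendieck group, letting the subgroup structure of $\widehat{A}_{n-1}$ and of $\widehat{S}_1+\widehat{S}_2$ absorb all the sign bookkeeping; your key observation that $z\in LX_{A_{n-1}}$ forces $[z,0]=[z+a',a']\in\widehat{A}_{n-1}$ is exactly the abstract content of the paper's witnesses $w,w'$. The two arguments prove the same inclusion, but yours is shorter and less error-prone because it never manipulates equivalence-class representatives beyond that single step; the paper's version has the mild advantage of exhibiting concrete elements $s_i,s_i'$ realizing the decomposition.
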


\begin{proof}
We have to see that the above injective map $f:\mathsf{NorSub}(M)\to\mathsf{Sub}(\widehat{M})$ is join-preserving. Let be $S_1,S_2\lhd M$. The inclusion $\widehat{S}_1\vee\widehat{S}_2\subseteq\widehat{S_1\vee S}_2$ follows because $f$ is order-preserving, and $\widehat{S}_1\vee\widehat{S}_2$ is the smallest subgroup containing both $\widehat{S}_1$ and $\widehat{S}_2$. To prove the reverse inclusion, let be $x\in\widehat{S_1\vee S}_2$. This means that $x=[t,t']$ for some $t,t'\in S_1\vee S_2$. Since $\widehat{S}_1\vee\widehat{S}_2=\widehat{S}_1+\widehat{S}_2$ we have to see that
\begin{equation}\label{(t,t')}
(t,t')\sim(s_1+s_2,s'_1+s'_2)
\end{equation}
for some pairs $(s_1,s'_1)\in S_1\times S_1$, and $(s_2,s'_2)\in S_2\times S_2$. To prove this, recall that
\[
S_1\vee S_2=\mathrm{ncl}_M(S_1\cup S_2)=\bigcup_{n\geq 0}(S_1\cup S_2)_n,
\]
with $(S_1\cup S_2)_n$ the sequence of groupal submonoids recursively defined by 
\begin{align*}
(S_1\cup S_2)_0&=\langle S_1\cup S_2\rangle',
\\
(S_1\cup S_2)_n&=\langle\bigcup_{z\in LX_{(S_1\cup S_2)_{n-1}}}(z+(S_1\cup S_2)_{n-1})\rangle',\quad n\geq 1
\end{align*}
(see \S~\ref{clausura_normal}). Since the sequence $(S_1\cup S_2)_n$ is a non-decreasing chain $t,t'\in S_1\vee S_2$ implies that $t,t'\in (S_1\cup S_2)_n$ for some $n\geq 0$. Then we prove (\ref{(t,t')}) by induction on $n\geq 0$. If $n=0$ we have $t,t'\in\langle S_1\cup S_2\rangle$ because both $S_1,S_2$ are groupal submonoids. Hence both are finite sums of elements in $S_1$ and/or $S_2$. By the commutativity of $M$, it follows that $(t,t')$ is in fact equal to a pair as in the right hand side of (\ref{(t,t')}). Let us now assume that for some $n\geq 1$ every pair $(u,u')$ with $u,u'\in (S_1\cup S_2)_{n-1}$ is equivalent to a pair as in the right hand side of (\ref{(t,t')}), and let be $t,t'\in(S_1\cup S_2)_n$. This means that there exist $z_1,\ldots,z_{k+l},z'_1,\ldots,z'_{k'+l'}\in LX_{(S_1\cup S_2)_{n-1}}$, and $u_1,\ldots,u_{k+l},u'_1,\ldots,u'_{k'+l'}\in (S_1\cup S_2)_{n-1}$ such that $z_{i}+u_{i},z'_{i'}+u'_{i'}$ are invertible for each $i\in\{k+1,\ldots,k+l\}$ and $i'\in\{k'+1,\ldots,k'+l'\}$, and $(t,t')=(A+B,A'+B')$ with
\begin{align*}
A&=(z_1+u_1)+\cdots+(z_k+u_k),
\\
A'&=(z'_1+u'_1)+\cdots+(z'_{k'}+u'_{k'}),
\\
B&=[-(z_{k+1}+u_{k+1})]+\cdots+[-(z_{k+l}+u_{k+l})],
\\
B'&=[-(z'_{k'+1}+u'_{k'+1})]+\cdots+[-(z'_{k'+l'}+u'_{k'+l'})]
\end{align*}
(here we are using the commutativity of $M$ to write first all `positive terms' in the expressions of both $t,t'$). But 
\begin{align*}
(t,t')&=(A+B,A'+B')
\\
&\sim(A+B,A'+B')+((-B)+(-B'),(-B)+(-B'))
\\
&=(A+(-B'),A'+(-B))
\end{align*}
Moreover, since $M$ is commutative, and both $LX_{(S_1\cup S_2)_{n-1}}$ and $(S_1\cup S_2)_{n-1}$ are submonoids reordering terms we have
\begin{align*}
A+(-B')&=y+v,
\\
A'+(-B)&=y'+v',
\end{align*}
with $y\in LX_{(S_1\cup S_2)_{n-1}}$, $v\in(S_1\cup S_2)_{n-1}$ given by
\begin{align*}
y&=z_1+\cdots+z_k+z'_{k'+1}+\cdots+z'_{k'+l'},
\\
v&=u_1+\cdots+u_k+u'_{k'+1}+\cdots+u'_{k'+l'},
\end{align*} 
and similarly $y',v'$. Now, since $y,y'\in LX_{(S_1\cup S_2)_{n-1}}$ there exists $w,w'\in(S_1\cup S_2)_{n-1}$ such that $y+w,y'+w'\in(S_1\cup S_2)_{n-1}$. Therefore
\[
(t,t')\sim(y+v,y'+v')\sim(y+w+v+w',y'+w'+v'+w)\in (S_1\cup S_2)_{n-1}.
\]
By the induction hypothesis it follows that $(t,t')$ is also equivalent to a pair as in the right hand side of (\ref{(t,t')}).  Hence
\[
x=[t,t']=[s_1,s'_1]+[s_2,s'_2]\in\widehat{S}_1+\widehat{S}_2,
\]
and $\widehat{S}_1\vee\widehat{S}_2\supseteq\widehat{S_1\vee S}_2$.
\end{proof}

\begin{rem}{\rm 
It seems that for a generic cancellative commutative monoid $M$ the lattice $\mathsf{NorSub}(M)$ is not isomorphic through the injection $f$ to a sublattice of $\mathsf{Sub}(\widehat{M})$, a fact which will imply the modularity of $M$ also in this more general case. The problem is that condition (*) seems to be also necessary to prove that $f$ is meet-preserving, and not just onto. Indeed, the inclusion $\widehat{S_1\cap S}_2\subseteq\widehat{S}_1\cap\widehat{S}_2$ is always true because $S_1\cap S_2\subseteq S_1,S_2$ and hence, $\widehat{S_1\cap S}_2$ is a subgroup contained in both $\widehat{S}_1$ and $\widehat{S}_1$. However, in order to prove the reverse inclusion we have to see that everytime we have $u=[s_1,s'_1]=[s_2,s'_2]$ for some pairs $(s_1,s'_1)\in S_1\times S_1$, and $(s_2,s'_2)\in S_2\times S_2$ then there is a pair $(x,y)$ with $x,y\in S_1\cap S_2$ whose equivalence class is $u$. Using hypothesis (*) this is easily shown. For instance, if $s_1=x+s'_1$ or $s'_1=x+s_1$ for some $x\in M$ then $x\in LX_{S_1}$ and hence, $x\in S_1$ because $S_1$ is invariant. In other words, we have either $u=[x,0]$, or $u=[0,x]$ for some $x\in S_1$. Similarly, the invariance of $S_2$ shows that either $u=[y,0]$, or $u=[0,y]$ for some $y\in S_2$. It is now easy to see that in either case we have $x,y\in S_1\cap S_2$ and hence, that $u\in\widehat{S_1\cap S_2}$. For instance, if $u=[x,0]=[y,0]$ we have $x=y$, while $u=[x,0]=[0,y]$ implies that $x+y=0$, i.e. $y$ is the opposite of $x$ and hence, we also have $y\in S_1$ because $S_1$ is groupal. However, it is not clear that without assuming (*) the reverse inclusion is still true.
}
\end{rem}

\section{On the lattice of congruences on a monoid}

As recalled in the introduction, the lattice of congruences on a group $G$ is isomorphic to the lattice of normal subgroups of $G$. The purpose of this section is to prove a weaker version of this result valid for a generic monoid $M$. More precisely, we shall see that $\mathsf{NorSub}(M)$ embeds canonically into the set of congruences on $M$, and that this embedding is an isomorphism of join semilattices between $\mathsf{NorSub}(M)$ and the join subsemilattice of the so called `normal congruences' on $M$. We shall also describe a general procedure to compute the ``blow up'' of $\mathsf{NorSub}(M)$ giving the whole lattice of congruences on $M$. As we shall see, it basically reduces the problem of finding all congruences to being able to compute the unital ones.

In what follows $\mathsf{Cong}(M)$ denotes the set of congruences on a monoid $M$. When ordered by inclusion, it is a complete lattice with meets and joins respectively given by
\begin{align*}
\bigwedge_{i\in I}R_i&=\bigcap_{i\in I}R_i,
\\
\bigvee_{i\in I}R_i&=\left(\bigcup_{i\in I}R_i\right)^\sharp
\end{align*}
for any family of congruences $\{R_i\}_{i\in I}$ on $M$. Here $Y^\sharp$ for every subset $Y\subseteq M\times M$ denotes the smallest congruence on $M$ containing $Y$. Explicitly, it is the equivalence relation on $M$ generated by the subset $\{(xay,xby),\,(a,b)\in Y,\,x,y\in M\}$ (see \cite{Howie1995}, Proposition~1.5.8, or \cite{Cain2020}, Propositions~1.27 and 1.29).

\subsection{Congruence induced by a subset}
\label{congruencia_induida_per_T}
For every subset $A\subseteq M$ let us denote by $\mathsf{Cong}(M,A)$ the subset of $\mathsf{Cong}(M)$ consisting of the congruences $R$ such that $A\subseteq[1]_R$. Notice that both the meet and join in $\mathsf{Cong}(M)$ of a family of congruences in $\mathsf{Cong}(M,A)$ still are in $\mathsf{Cong}(M,A)$. Hence $\mathsf{Cong}(M,A)$ is a complete sublattice of $\mathsf{Cong}(M)$. Let be $R_A$ the least element in $\mathsf{Cong}(M,A)$, i.e.
\[
R_A:=(\{1\}\times A)^\sharp.
\]
It will be called the congruence on $M$ induced by $A$. Explicitly, $R_A$ is given as follows. If $v,w\in M$ we shall say that $w$ is an {\em elementary} $A$-{\em deformation} of $v$, and we shall write 
\[
\hspace{5truecm}v\underset{A}{\rightsquigarrow} w \quad \mbox{(or just $v\rightsquigarrow w$ when no confusion arises)},
\]
if there exists $v_1,v_2\in M$, and $a\in A$ such that $v=v_1v_2$, and $w=v_1av_2$. Then $(x,y)\in R_A$ if and only if there exists a finite sequence $z_0,\ldots,z_{k}$ of elements in $M$, with $k\geq 0$, such that
\begin{itemize}
\item[(a)] $z_0=x$, and $z_{k}=y$,
\item[(b)] for each $i\in\{0,1,\ldots,k-1\}$ either $z_i\rightsquigarrow z_{i+1}$, or $z_{i+1}\rightsquigarrow z_i$.
\end{itemize}
Since every element in $M$ is an elementary $A$-deformation of itself, this is equivalent to the existence of a finite sequence $z_0,\ldots,z_{2k}$, with $k\geq 0$, satisfying condition (a) above, and such that
\[
z_0\rightsquigarrow z_{1}\leftsquigarrow z_2\rightsquigarrow \cdots \leftsquigarrow z_{2k-2}\rightsquigarrow z_{2k-1}\leftsquigarrow z_{2k}
\]

\begin{ex}\label{congruencies_normals_a_N}{\rm
Let be $M=\NN_+$. Then $R_{\langle n\rangle}$, $n\geq 1$, is nothing but the usual congruence modulo $n$. Indeed, a positive integer $l\geq 0$ is an elementary $\langle n\rangle$-deformation of $k\geq 0$ if and only if $l\geq k$, and $l-k\in\langle n\rangle$. Moreover, if $l$ is an elementary $\langle n\rangle$-deformation of both $k$ and $k'$, and $k'\geq k$ then $k'$ is an elementary $\langle n\rangle$-deformation of $k$. Therefore $(x,y)\in R_{\langle n\rangle}$ if and only if $x,y$ differ by a multiple of $n$.
}
\end{ex}

In fact, in the commutative case, and when $A$ is a subsemigroup of $M$, as it is the case in the previous example, the congruence $R_A$ is more easily described as follows. 

\begin{prop}
Let $M$ be a commutative monoid, $A$ a subsemigroup of $M$, and $x,y\in M$. Then $(x,y)\in R_A$ if and only if there exists $a,a'\in A$ such that $x+a=y+a'$.
\end{prop}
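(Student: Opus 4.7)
The plan is to prove both inclusions by introducing the binary relation
\[
x \sim y \iff \exists\, a, a' \in A \text{ such that } x + a = y + a',
\]
and showing that $\sim$ coincides with $R_A$.

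For the ``if'' direction, I would argue directly: since $A \subseteq [0]_{R_A}$, we have $(0, a), (0, a') \in R_A$ for the two elements $a, a' \in A$ witnessing $x + a = y + a'$. Because $R_A$ is a congruence (compatible with $+$), adding $x$ and $y$ respectively yields $(x, x+a), (y, y+a') \in R_A$. Since $x + a = y + a'$, transitivity and symmetry give $(x,y) \in R_A$.

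For the ``only if'' direction, the strategy is to verify that $\sim$ is itself a congruence on $M$ that contains $\{0\} \times A$; minimality of $R_A$ will then force $R_A \subseteq \sim$. The checks are:
\begin{itemize}
\item \emph{Reflexivity:} pick any $a \in A$ (possible since $A$ is a nonempty subsemigroup), then $x + a = x + a$.
\item \emph{Symmetry:} immediate from the symmetric form of the defining equation.
\item \emph{Transitivity:} if $x + a = y + a'$ and $y + b = z + b'$, add $b$ to the first equation and $a'$ to the second, then use commutativity to obtain $x + (a + b) = z + (a' + b')$; here $a + b, a' + b' \in A$ because $A$ is a subsemigroup.
\item \emph{Compatibility with $+$:} if $x + a = y + a'$, then $(x+z) + a = (y+z) + a'$ for any $z \in M$; one-sided compatibility is enough by commutativity.
\item \emph{Contains $\{0\} \times A$:} for $a \in A$, pick any $b \in A$ and observe $0 + (a+b) = a + b$, with $a+b, b \in A$, so $0 \sim a$.
\end{itemize}

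The main (minor) obstacle is the transitivity step, where one must produce a single pair of elements of $A$ from two such pairs; this is exactly where the subsemigroup hypothesis on $A$ and the commutativity of $M$ are used in an essential way. Once $\sim$ is shown to be a congruence containing $\{0\}\times A$, the inclusion $R_A \subseteq \sim$ follows by the defining property of $R_A = (\{0\}\times A)^{\sharp}$, which combined with the ``if'' direction establishes the claimed equivalence.
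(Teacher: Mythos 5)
Your proof is correct, but the ``only if'' direction takes a genuinely different route from the paper's. The paper works from the explicit description of $R_A=(\{0\}\times A)^{\sharp}$ as a zigzag of elementary $A$-deformations (noting that in the commutative case an elementary $A$-deformation of $u$ is exactly an element $u+a$ with $a\in A$), and then proves the converse by induction on the length $k$ of the zigzag, using closure of $A$ under sums to merge consecutive steps. You instead bypass the zigzag description entirely: you verify that the relation $\sim$ defined by $x+a=y+a'$ is itself a congruence containing $\{0\}\times A$, and invoke the minimality defining $R_A$ to get $R_A\subseteq\;\sim$. Your transitivity check is essentially the same algebraic manipulation as the paper's inductive step, so the two arguments have the same computational core; but your packaging is more self-contained, since it does not depend on the (cited, not reproved) characterization of the congruence generated by a set, whereas the paper's version stays closer to the elementary-deformation machinery it uses throughout Section~3. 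One small point worth making explicit in your write-up: nonemptiness of $A$, which you use for reflexivity and for showing $0\sim a$, is guaranteed because the paper defines a subsemigroup to be a \emph{nonempty} subset closed under the operation.
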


\begin{proof}
If $M$ is commutative, we clearly have that $v\in M$ is an elementary $A$-deformation of $u\in M$ if and only if $v=u+a$ for some $a\in A$. Hence if $x+a=y+a'$ for some $a,a'\in A$ then we have $x\rightsquigarrow x+a=y+a'\leftsquigarrow y$, and $(x,y)\in R_A$. Conversely, let be $(x,y)\in R_A$. Then there exists a sequence $z_0,\ldots,z_{2k}$ as before. If $k=1$ we have $x+a=y+a'$ for some $a,a'\in R$. The cases $k\geq 2$ follow then by induction on $k$ together with the fact that $A$ is closed by sums. Thus let us assume that $z_0+a_0=z_{2k-2}+a_1$ for some $a_0,a_1$. Then from the case $k=1$ it follows that we also have $z_{2k-2}+a'_1=z_{2k}+a_2$ for some $a'_1,a_2\in A$. Hence 
\[
z_0+a_0+a'_1=z_{2k-2}+a_1+a'_1=z_{2k}+a_1+a_2
\]
with $a_0+a'_1,a_1+a_2\in A$.
\end{proof}

Clearly, if $M\neq\{1\}$ the assignments $A\mapsto R_A$ are not one-to-one. Thus for each $A\subsetneq M$ with $1\notin A$ we have $R_A=R_{A\cup\{1\}}$. In fact, $R_A$ depends on $A$ only through its normal closure so that subsets of $M$ having the same normal closure induce the same congruence. Actually, the converse is also true. To prove these claims, we need the following invariance properties of $\mathsf{Cong}(M,A)$ with respect to the subset $A$. 

\begin{prop}\label{cong(M)_diversos_subconjunts}
Let be $A$ any subset of $M$. Then:
\begin{itemize}
\item[(a)] $\mathsf{Cong}(M,A)=\mathsf{Cong}(M,\langle A\rangle')$, with $\langle A\rangle'$ the groupal submonoid of $M$ generated by $A$;
\item[(b)] $\mathsf{Cong}(M,A)=\mathsf{Cong}(M,xAy)$ for every $(x,y)\in X_A$.
\end{itemize} 
\end{prop}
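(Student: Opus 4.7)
The backbone of both statements is the structural fact that for every congruence $R$ on $M$, the class $[1]_R$ is a normal submonoid of $M$; this is what I would establish first. Indeed $[1]_R$ is a submonoid by compatibility of $R$ with $\cdot$; it is groupal since $x\in U(M)\cap[1]_R$ gives $x^{-1}=x^{-1}\cdot 1\sim x^{-1}x=1$; and it is invariant because, on the one hand, if $xby\in[1]_R$ with $b\in[1]_R$ then $b\sim 1$ forces $xy\sim xby\sim 1$ and hence $xy\in[1]_R$, while on the other hand, once $xy\in[1]_R$, every $b\in[1]_R$ satisfies $xby\sim xy\sim 1$, so $x[1]_Ry\subseteq[1]_R$.

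Granted this, (a) is immediate: since $[1]_R$ is a groupal submonoid, $A\subseteq[1]_R$ is equivalent to the inclusion of the smallest groupal submonoid containing $A$, namely $\langle A\rangle'$, in $[1]_R$. For (b), the inclusion $\mathsf{Cong}(M,A)\subseteq\mathsf{Cong}(M,xAy)$ is again a direct consequence of invariance: assuming $A\subseteq[1]_R$ one has $X_A\subseteq X_{[1]_R}$, so $(x,y)\in X_{[1]_R}$ and therefore $xAy\subseteq x[1]_Ry\subseteq[1]_R$.

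The reverse inclusion of (b) is where I anticipate the main obstacle. Assuming $xAy\subseteq[1]_R$, I can first harvest two useful consequences: picking $a_1,a_2\in A$ with $xa_1y=a_2$ (as afforded by $(x,y)\in X_A$), the element $a_2=xa_1y$ lies in $xAy\subseteq[1]_R$, so $a_2\sim 1$; then applying the hypothesis to $a_2\in A$, $xa_2y\in[1]_R$ combined with $a_2\sim 1$ gives $xy\sim xa_2y\sim 1$, placing $xy$ in $[1]_R$ as well. The remaining step is to promote $xay\sim 1$ and $xy\sim 1$ to $a\sim 1$ for each $a\in A$; passing to $M/R$, this amounts to cancelling the equation $\bar x\bar a\bar y=\bar x\bar y=\bar 1$ down to $\bar a=\bar 1$. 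The relation $\bar x\bar y=\bar 1$ provides $\bar x$ with the right-inverse $\bar y$, but this alone does not yield two-sided cancellation in a general monoid; completing the argument will, I expect, require either producing a zigzag of elementary $xAy$-deformations witnessing $(1,a)\in R_{xAy}$ directly, or a finer analysis of the idempotent $\bar y\bar x$ in $M/R$ to force $\bar y\bar x=\bar 1$ and hence genuine invertibility of $\bar x$ in the quotient.
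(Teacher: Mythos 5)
Your proof of (a) and of the inclusion $\mathsf{Cong}(M,A)\subseteq\mathsf{Cong}(M,xAy)$ in (b) is correct and is essentially the paper's argument, except that you route everything through the fact that $[1]_R$ is a normal submonoid (which the paper only establishes later, as Proposition~\ref{lema_classe_neutre}), whereas the paper manipulates pairs $(1,a_1)(1,a_2)\in R$ and $(a_2,xay)=(x,x)(a_1,a)(y,y)\in R$ directly. The step you could not close --- promoting $\bar x\bar a\bar y=\bar x\bar y=\bar 1$ in $M/R$ to $\bar a=\bar 1$ --- is not just an obstacle you failed to overcome: the inclusion $\mathsf{Cong}(M,xAy)\subseteq\mathsf{Cong}(M,A)$ is false in general, so no amount of further analysis of the idempotent $\bar y\bar x$ will rescue it. Concretely, let $M$ be the bicyclic monoid $\langle a,b\mid ab=1\rangle$, let $A=\{1,ba\}$ and $(x,y)=(a,b)$. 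Then $aAb=\{ab,\;a(ba)b\}=\{1\}$, which meets $A$, so $(a,b)\in X_A$; yet $\mathsf{Cong}(M,aAb)=\mathsf{Cong}(M,\{1\})$ is all of $\mathsf{Cong}(M)$, while the identity congruence $\Delta_M$ does not lie in $\mathsf{Cong}(M,A)$ because $ba\neq 1$. Your instinct that a right inverse ($\bar x\bar y=1$) is not a two-sided inverse is exactly what this example exploits.

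You should know that the paper itself does not prove this direction either: it declares the inclusion $\mathsf{Cong}(M,A)\supseteq\mathsf{Cong}(M,xAy)$ ``obvious'' and proves only the other one, so the example above exposes an oversight in the stated proposition rather than a defect peculiar to your attempt. The error is harmless downstream: the only use of Proposition~\ref{cong(M)_diversos_subconjunts} is in the induction of Proposition~\ref{factoritzacio_Phi'_M}, which needs precisely the two inclusions $\mathsf{Cong}(M,A)\subseteq\mathsf{Cong}(M,\langle A\rangle')$ and $\mathsf{Cong}(M,A)\subseteq\mathsf{Cong}(M,xAy)$ that you (and the paper) do establish. The honest fix is to weaken statement (b) to that one-sided inclusion; with that emendation your argument is complete and, if anything, cleaner than the paper's, since factoring through the normality of $[1]_R$ makes both halves of the proposition one-line consequences of a single structural fact.
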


\begin{proof}
Clearly, $\mathsf{Cong}(M,A)\supseteq\mathsf{Cong}(M,\langle A\rangle')$ because $A\subseteq\langle A\rangle'$. To prove the reverse inclusion let be $R\in\mathsf{Cong}(M)$ such that $A\subseteq [1]_R$. Then for every $a_1,a_2\in A$ we have $(1,a_1),(1,a_2)\in R$ and hence,
\[
(1,a_1a_2)=(1,a_1)(1,a_2)\in R.
\]
Moreover, if $a\in A$ is invertible we have $(a^{-1},a^{-1}),(1,a)\in R$ and hence,
\[
(a^{-1},1)=(a^{-1},a^{-1})(1,a)\in R.
\]
Since every element in $\langle A\rangle'$ is a finite product of elements of $A$ and/or their inverses (when they exist) it follows that $[1]_R$ also contains $\langle A\rangle'$. This proves (a). As to item (b), the inclusion $\mathsf{Cong}(M,A)\supseteq\mathsf{Cong}(M,xAy)$ is again obvious. To prove the reverse inclusion let be $R\in\mathsf{Cong}(M)$ such that $[1]_R$ contains $A$,and let be $(x,y)\in X_A$. Then there exists $a_1,a_2\in A$ such that $xa_1y=a_2$, and $(a_1,1),(1,a_2)\in R$ by hypothesis. Then for every $a\in A$ we have $(1,a)\in R$ and hence, $(a_1,a)=(a_1,1)(1,a)\in R$. Therefore 
\[
(a_2,xay)=(x,x)(a_1,a)(y,y)\in R.
\]
By transitivity, we conclude that $(1,xay)\in R$. Since this is true for each $a\in A$, and for each $(x,y)\in X_A$ it follows that $[1]_R$ also contains every generalized conjugate of $A$.
\end{proof}

\begin{prop}\label{factoritzacio_Phi'_M}
For every monoid $M$, and every subset $A$ of $M$ we have 
\[
\mathsf{Cong}(M,A)=\mathsf{Cong}(M,\mathrm{ncl}(A))
\]
and consequently, $R_A=R_{\mathrm{ncl}(A)}$. 
\end{prop}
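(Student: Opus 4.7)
The plan is to deduce this from Proposition~\ref{cong(M)_diversos_subconjunts} by iterating it along the recursive construction of $\mathrm{ncl}(A)$ given in Section~\ref{clausura_normal}. One inclusion, namely $\mathsf{Cong}(M,\mathrm{ncl}(A))\subseteq\mathsf{Cong}(M,A)$, is immediate from $A\subseteq\mathrm{ncl}(A)$, so the only real content is the reverse inclusion: any congruence $R$ whose identity class contains $A$ already contains $\mathrm{ncl}(A)$.

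Concretely, I would take $R\in\mathsf{Cong}(M,A)$ and show by induction on $n\geq 0$ that $A_n\subseteq[1]_R$, where $(A_n)_{n\geq 0}$ is the chain of groupal submonoids defined in~(\ref{def_An-1})--(\ref{def_An-2}). For the base case $n=0$, part~(a) of Proposition~\ref{cong(M)_diversos_subconjunts} (applied to $A$) gives $\mathsf{Cong}(M,A)=\mathsf{Cong}(M,\langle A\rangle')=\mathsf{Cong}(M,A_0)$, hence $A_0\subseteq[1]_R$. For the inductive step, assume $A_{n-1}\subseteq[1]_R$, i.e. $R\in\mathsf{Cong}(M,A_{n-1})$. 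By part~(b) of Proposition~\ref{cong(M)_diversos_subconjunts} applied to $A_{n-1}$ we get $xA_{n-1}y\subseteq[1]_R$ for every $(x,y)\in X_{A_{n-1}}$, so that $\bigcup_{(x,y)\in X_{A_{n-1}}}xA_{n-1}y\subseteq[1]_R$, and then part~(a) applied to this union yields $A_n=\langle\bigcup_{(x,y)\in X_{A_{n-1}}}xA_{n-1}y\rangle'\subseteq[1]_R$.

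Taking the union over $n$ and invoking Proposition~\ref{descripcio_clausura_normal} gives $\mathrm{ncl}(A)=\bigcup_{n\geq 0}A_n\subseteq[1]_R$, i.e. $R\in\mathsf{Cong}(M,\mathrm{ncl}(A))$, establishing the nontrivial inclusion and hence the equality $\mathsf{Cong}(M,A)=\mathsf{Cong}(M,\mathrm{ncl}(A))$. The consequence $R_A=R_{\mathrm{ncl}(A)}$ then follows at once: both congruences are by definition the least element of their respective sublattice, and these two sublattices coincide.

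There is essentially no obstacle here, since the heavy lifting has already been done in Proposition~\ref{cong(M)_diversos_subconjunts} and in the explicit description of $\mathrm{ncl}(A)$ from Proposition~\ref{descripcio_clausura_normal}; the only point requiring mild care is to alternate parts~(a) and~(b) of Proposition~\ref{cong(M)_diversos_subconjunts} in the right order at each induction step, so that one first absorbs all generalized conjugates of $A_{n-1}$ into $[1]_R$ and then closes under the groupal submonoid operation.
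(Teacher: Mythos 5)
Your proposal is correct and follows exactly the paper's own argument: the paper likewise notes that the inclusion $\mathsf{Cong}(M,\mathrm{ncl}(A))\subseteq\mathsf{Cong}(M,A)$ is obvious and proves the reverse by ``an easy induction on $n\geq 0$'' using Proposition~\ref{cong(M)_diversos_subconjunts} together with Proposition~\ref{descripcio_clausura_normal}, deducing the final assertion from the fact that $R_A$ and $R_{\mathrm{ncl}(A)}$ are the minima of the two (equal) sublattices. You have merely written out explicitly the alternation of parts~(a) and~(b) that the paper leaves to the reader.
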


\begin{proof}
The inclusion $\mathsf{Cong}(M,\mathrm{ncl}(A))\subseteq\mathsf{Cong}(M,A)$ is obvious. To prove the reverse inclusion, let us assume that $R\in\mathsf{Cong}(M,A)$. Then an easy induction on $n\geq 0$ using Lemma~\ref{cong(M)_diversos_subconjunts} shows that $R\in\mathsf{Cong}(M,A_n)$ for each $n\geq 0$, where $A_n$ is the sequence of groupal submonoids of $M$ defined by (\ref{def_An-1})-(\ref{def_An-2}). Since $\mathrm{ncl}(A)$ is the union of all $A_n$'s (cf. Proposition~\ref{descripcio_clausura_normal} above) it follows that $R\in\mathsf{Cong}(M,\mathrm{ncl}(A))$. The last assertion follows from the fact that $R_A$ is the minimum of the sublattice $\mathsf{Cong}(M,A)$, and $R_{\mathrm{ncl}(A)}$ is the minimum of $\mathsf{Cong}(M,\mathrm{ncl}(A))$.
\end{proof}

\subsection{Embedding the set of normal submonoids into the set of congruences}
Proposition~\ref{factoritzacio_Phi'_M} implies that subsets of $M$ having the same normal closure induce the same congruence. Actually, the converse is also true. In fact, we claim that the map $\Phi_M:\mathsf{NorSub}(M)\to\mathsf{Cong}(M)$ sending each normal submonoid $S\lhd M$ to the congruence $R_S$ induced by $S$ is one-to-one, from which the above converse readily follows. To prove this, we first need the following analog to a well known fact about the congruences on a group.

\begin{prop}\label{lema_classe_neutre}
For every congruence $R$ on a monoid $M$ the identity element equivalence class $[1]_R$ is a normal submonoid of $M$.
\end{prop}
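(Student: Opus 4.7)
The plan is to verify the three defining properties of a normal submonoid for the set $N := [1]_R$: it is a submonoid, it is closed under inverses (groupal), and it satisfies the invariance condition (\ref{condicio_invariant}).

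First I would check the (straightforward) submonoid and groupal properties. Since $(1,1) \in R$, we have $1 \in N$. If $a, b \in N$, then $(a,1), (b,1) \in R$; applying the congruence property (componentwise multiplication on pairs in $R$) yields $(ab, 1) \in R$, so $ab \in N$. For the groupal property, if $a \in N \cap U(M)$ then $a^{-1}$ exists; from $(a,1) \in R$ and $(a^{-1}, a^{-1}) \in R$ we get $(1, a^{-1}) = (a^{-1}, a^{-1})\cdot(a,1) \in R$, so $a^{-1} \in N$.

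The main step is the invariance condition, which is the one where the stronger formulation (iii)$\Rightarrow$(i) must be extracted from the congruence structure. Suppose $(x,y) \in X_N$, i.e.\ there exist $a, b \in N$ with $xay = b$. Since $(a,1) \in R$, multiplying left by $(x,x)$ and right by $(y,y)$ gives $(xay, xy) \in R$, i.e.\ $(b, xy) \in R$. Combined with $(b, 1) \in R$ and transitivity, this shows $xy \in N$. Now, for any $c \in N$, we have $(c, 1) \in R$, hence $(xcy, xy) \in R$; composing with $(xy, 1) \in R$ via transitivity yields $(xcy, 1) \in R$, i.e.\ $xcy \in N$. Therefore $xNy \subseteq N$, as required.

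I do not expect any real obstacle here; the only subtle point is realizing that the invariance condition is not just $xy \in N$ but the stronger $xNy \subseteq N$, and that the congruence structure delivers both simultaneously via the trick of first showing $xy \in N$ and then using $(c,1) \in R \Rightarrow (xcy, xy) \in R$ together with transitivity.
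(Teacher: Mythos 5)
Your proposal is correct and follows essentially the same route as the paper: verify the groupal submonoid properties from compatibility of $R$, then for $(x,y)\in X_{[1]_R}$ deduce $xy\in[1]_R$ from $xay=b$ with $a,b\in[1]_R$, and finally conclude $xcy\in[1]_R$ for all $c\in[1]_R$ by the same compatibility argument. The only difference is that you spell out the pairwise multiplications explicitly where the paper writes the chain of equivalence classes $[z']_R=[xzy]_R=[xy]_R$.
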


\begin{proof}
Clearly, $[1]_R$ is a groupal submonoid of $M$ because of the compatibility of $R$. To see that it is invariant, let be $(x,y)\in X_{[1]_R}$. This means that there exist $z,z'\in[1]_R$ such that $xzy=z'$. Then by the compatibility of $R$ we have
\[
[1]_R=[z']_R=[xzy]_R=[xy]_R,
\]
i.e. $xy\in[1]_R$. Therefore for each $z_1\in[1]_R$ we have $[xz_1y]_R=[xy]_R=[1]_R$, i.e. $x[1]_Ry\subseteq [1]_R$.
\end{proof}

Then let $\Psi_M:\mathsf{Cong}(M)\to\mathsf{NorSub}(M)$ be the map given by $R\mapsto [1]_R$. If $M$ is a group $G$ we know that $\Phi_G$ is a lattice isomorphism with inverse $\Psi_G$. Although this is no longer true for arbitrary monoids, the following weaker facts still remain true in the general setting.

\begin{thm}\label{Phi_M_injectiva}
Let $M$ be a monoid. Then:
\begin{itemize}
\item[(a)] $\Phi_M$ (resp. $\Psi_M$) is a join-homomorphism (resp. a meet-homomorphism);
\item[(b)] $\Phi_M$ is a (set-theoretic) section of $\Psi_M$. In particular, it is one-to-one.
\end{itemize}
\end{thm}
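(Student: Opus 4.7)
The plan is to handle parts (a) and (b) separately; part (a) is essentially formal, while part (b) rests on a short induction that uses the invariance of $S$ in a crucial way.

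For part (a), the meet-preservation of $\Psi_M$ is immediate because meets in $\mathsf{Cong}(M)$ are intersections and $[1]_{\bigcap_i R_i}=\bigcap_i [1]_{R_i}$, which is precisely the meet in $\mathsf{NorSub}(M)$. For the join-preservation of $\Phi_M$, I would chain the equalities
\[
\Phi_M\bigl(\textstyle\bigvee_i S_i\bigr)=R_{\mathrm{ncl}_M(\bigcup_i S_i)}=R_{\bigcup_i S_i}=\Bigl(\bigcup_i(\{1\}\times S_i)\Bigr)^{\!\sharp}=\bigvee_i R_{S_i}=\bigvee_i\Phi_M(S_i),
\]
using Proposition~\ref{factoritzacio_Phi'_M} for the second equality and the fact that the expressions in the middle and fifth positions are both, by definition, the smallest congruence containing $\bigcup_i(\{1\}\times S_i)$.

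For part (b), since $\Phi_M(S)=R_S$ is in $\mathsf{Cong}(M,S)$ by construction, the inclusion $S\subseteq[1]_{R_S}$ is free, so everything reduces to showing $[1]_{R_S}\subseteq S$ for an arbitrary $S\lhd M$. Here I would invoke the explicit combinatorial description of $R_S$ from \S~\ref{congruencia_induida_per_T}: if $(1,x)\in R_S$ then there exists a chain $1=z_0,z_1,\ldots,z_k=x$ in which each consecutive pair $(z_i,z_{i+1})$ is related by an elementary $S$-deformation in one of the two directions. I would then prove by induction on $i$ that $z_i\in S$. The base case $z_0=1\in S$ is trivial. For the inductive step, the pair $(z_i,z_{i+1})$ has one of the two shapes $(uv,usv)$ or $(usv,uv)$ with $s\in S$. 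In the first shape, the IH says $uv\in S$, and since $1\in S$ this gives $uv=u\cdot 1\cdot v\in uSv\cap S$; in the second shape, the IH says $usv\in S$, so $usv\in uSv\cap S$ directly. Either way $(u,v)\in X_S$, so invariance yields $uSv\subseteq S$, which contains $z_{i+1}$.

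The main obstacle is spotting this inductive argument, in particular realizing that at every step the IH itself provides the witness in $uSv\cap S$ that activates invariance. Once $[1]_{R_S}=S$ is established, the statement ``$\Phi_M$ is a section of $\Psi_M$'' means exactly $\Psi_M\circ\Phi_M=\mathrm{id}_{\mathsf{NorSub}(M)}$, and the injectivity of $\Phi_M$ is the standard consequence of having a left inverse.
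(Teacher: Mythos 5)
Your proposal is correct. Part (b) is essentially the paper's own argument: the paper isolates your two inductive cases as separate facts --- every elementary $S$-deformation of an element of $S$ lies in $S$, and an element of $M$ admitting an elementary $S$-deformation landing in $S$ must itself lie in $S$ --- and both rest, exactly as in your write-up, on exhibiting a witness in $(uSv)\cap S$ and then invoking the invariance of $S$; your observation that the induction hypothesis itself supplies the witness is precisely what makes the paper's proof work. Part (a) is where you genuinely diverge on the join-preservation of $\Phi_M$: the paper proves the two inclusions separately, getting $R_S\vee R_{S'}\subseteq R_{S\vee S'}$ from monotonicity and the reverse inclusion from the fact (Proposition~\ref{lema_classe_neutre}) that $[1]_{R_S\vee R_{S'}}$ is a normal submonoid containing $S$ and $S'$, hence containing $S\vee S'$. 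You instead route through Proposition~\ref{factoritzacio_Phi'_M} (i.e.\ $R_A=R_{\mathrm{ncl}(A)}$) and identify both sides as the smallest congruence containing $\bigcup_i(\{1\}\times S_i)$; the only step you leave slightly implicit is that a congruence contains $\bigcup_i R_{S_i}$ if and only if it contains $\bigcup_i(\{1\}\times S_i)$, which is immediate from the minimality of each $R_{S_i}$. Your version is cleaner, applies verbatim to arbitrary (infinite) families rather than just binary joins, and trades the appeal to Proposition~\ref{lema_classe_neutre} for an appeal to Proposition~\ref{factoritzacio_Phi'_M}; both are available at this point in the paper, so either route is legitimate. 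The treatment of $\Psi_M$ being a meet-homomorphism is identical in both.
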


\begin{proof}
The map $\Phi_M$ is order-preserving. Thus if $S\subseteq S'$ then $R_{S'}$ is a congruence containing $\{1\}\times S'$ and consequently, also $\{1\}\times S$ so that $R_S\subseteq R_{S'}$ by definition of $R_S$. Since $S,S'\subseteq S\vee S'$ it follows that $R_S,R_{S'}\subseteq R_{S\vee S'}$ and hence, $R_S\vee R_{S'}\subseteq R_{S\vee S'}$ by definition of the join $R_S\vee R_{S'}$. To prove the reverse inclusion it is enough to observe that for every normal submonoids $S,S'$ the equivalence class of the identity element modulo $R_S\vee R_{S'}$ is a normal submonoid that contains both $S$ and $S'$ and hence, also $S\vee S'$. Therefore $R_{S\vee S'}\subseteq R_S\vee R_{S'}$ by definition of $R_{S\vee S'}$. This proves that $\Phi_M$ is join-preserving. As to $\Psi_M$, it is clearly order-preserving, and for every congruences $R,R'$, and every $x\in M$ we have $[x]_{R\cap R'}=[x]_R\cap[x]_{R'}$. In particular, this is true when $x=1$, i.e. $\Psi_M$ is meet-preserving. This proves (a).

To prove (b) we have to see that $[1]_{R_S}=S$ for every normal submonoid $S$ of $M$. The inclusion $S\subseteq\,[1]_{R_S}$ follows from the definition of $R_S$. To prove the reverse inclusion, let us consider an element $z\in[1]_{R_S}$. By the previous description of $R_S$ this means that there exists a finite sequence $x_0,\ldots,x_k$ of elements in $M$ such that $x_0=1$, $x_k=z$, and either $x_i\underset{S}{\rightsquigarrow} x_{i+1}$, or $x_{i+1}\underset{S}{\rightsquigarrow} x_i$ for each $i\in\{0,1,\ldots,k-1\}$. Then to see that $z\in S$ we shall prove the following facts:
\begin{itemize}
\item[(i)] every elementary $S$-deformation of an element in $S$ is again an element in $S$, and
\item[(ii)] an element in $S$ can be an elementary $S$-deformation of an element $v\in M$ only if $v\in S$.
\end{itemize}
Clearly, if both of these facts are true every element in the above sequence $x_0,\ldots,x_k$ is in $S$ because $x_0=1\in S$. In particular, we have $z=x_k\in S$ and hence, $[1]_{R_S}\subseteq S$. Let us prove (i)-(ii).

\smallskip
\noindent
\underline{Proof of (i)}. Let be $w\in M$ a $S$-deformation of some element $s\in S$. This means that there exists $x,y\in M$, and $s'\in S$ such that $s=xy$, and $w=xs'y$. But $s=xy\in S$ implies that $w=xs'y\in xSy\subseteq S$ because of the invariance of $S$.

\smallskip
\noindent
\underline{Proof of (ii)}. Let be $s\in S$ an elementary $S$-deformation of $v\in M$. This means that there exist $v_1,v_2\in M$ and $s'\in S$ such that $v=v_1v_2$, and $s=v_1s'v_2$. In particular, we have $s\in(v_1Sv_2)\cap S$ and hence, $v=v_1v_2\in S$ because of the invariance of $S$.
\end{proof}

\begin{cor}\label{igualtat_R_T}
Let $A,B$ be subsets of $M$. Then:
\begin{itemize}
\item[(a)] $R_A=R_{B}$ if and only if $\mathrm{ncl}_M(A)=\mathrm{ncl}_M(B)$;
\item[(b)] $[1]_{R_A}=\mathrm{ncl}_M(A)$.
\end{itemize}
\end{cor}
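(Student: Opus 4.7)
The plan is to derive both statements directly from the two results just established, namely Proposition~\ref{factoritzacio_Phi'_M} (which says $R_A = R_{\mathrm{ncl}_M(A)}$) and Theorem~\ref{Phi_M_injectiva} (which says $[1]_{R_S} = S$ for every normal submonoid $S$, i.e.\ that $\Phi_M$ is a section of $\Psi_M$). These are essentially all the ingredients required; the only real task is to string them together in the right order.

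I would prove part (b) first, since part (a) will follow from it almost formally. For (b), the chain is
\[
[1]_{R_A} \;=\; [1]_{R_{\mathrm{ncl}_M(A)}} \;=\; \mathrm{ncl}_M(A),
\]
where the first equality uses $R_A = R_{\mathrm{ncl}_M(A)}$ from Proposition~\ref{factoritzacio_Phi'_M}, and the second uses that $\mathrm{ncl}_M(A)$ is a normal submonoid of $M$, so Theorem~\ref{Phi_M_injectiva}(b) (i.e.\ $\Psi_M \circ \Phi_M = \mathrm{id}$) applies.

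For part (a), the implication $\mathrm{ncl}_M(A) = \mathrm{ncl}_M(B) \Longrightarrow R_A = R_B$ follows immediately from Proposition~\ref{factoritzacio_Phi'_M}, since then $R_A = R_{\mathrm{ncl}_M(A)} = R_{\mathrm{ncl}_M(B)} = R_B$. Conversely, if $R_A = R_B$ then $[1]_{R_A} = [1]_{R_B}$, and applying part (b) to both sides yields $\mathrm{ncl}_M(A) = \mathrm{ncl}_M(B)$.

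There is no real obstacle here: the corollary is essentially a repackaging of the injectivity of $\Phi_M$ together with the factorisation $R_A = R_{\mathrm{ncl}_M(A)}$. The only thing to be careful about is the order of presentation — establishing (b) before (a), so that the reverse direction of (a) has (b) available — but apart from that the argument is a direct substitution.
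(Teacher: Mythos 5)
Your proposal is correct and follows exactly the route the paper takes: its proof of this corollary is a one-line appeal to Proposition~\ref{factoritzacio_Phi'_M} and Theorem~\ref{Phi_M_injectiva}, and your chain $[1]_{R_A}=[1]_{R_{\mathrm{ncl}_M(A)}}=\mathrm{ncl}_M(A)$ followed by the two directions of (a) is precisely the intended unpacking of that appeal.
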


\begin{proof}
Both items are immediate consequences of Proposition~\ref{factoritzacio_Phi'_M} and Theorem~\ref{Phi_M_injectiva}.
\end{proof}

Next corollary mimicks the fact that every normal subgroup of a group $G$ is the kernel of some group homomorphism with domain $G$, thus providing more arguments in favor of the idea that normal submonoids are the right analog for arbitrary monoids of the normal subgroups of a group.

\begin{cor}\label{submonoide_normal=nucli}
Let $M$ be a monoid. Then every normal submonoid $S$ of $M$ is the preimage $f^{-1}(1)$ of a monoid homomorphism $f:M\to N$ for some monoid $N$. 
\end{cor}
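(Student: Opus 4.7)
The plan is to take $f$ to be the canonical projection onto the quotient monoid associated with the congruence induced by $S$. Specifically, given $S \lhd M$, form the congruence $R_S \in \mathsf{Cong}(M)$ provided by the map $\Phi_M$, set $N := M/R_S$, and let $f := \pi_{R_S} : M \to M/R_S$ be the canonical projection sending $x \mapsto [x]_{R_S}$. This is a monoid homomorphism by construction, with identity element of $N$ equal to $[1]_{R_S}$.

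The only thing to verify is then that $f^{-1}(1_N) = S$. By definition of $f$, we have $f^{-1}(1_N) = f^{-1}([1]_{R_S}) = [1]_{R_S}$, so the claim reduces to the equality $[1]_{R_S} = S$. But this is precisely what has already been established: either directly from Theorem~\ref{Phi_M_injectiva}(b), which asserts that $\Phi_M$ is a set-theoretic section of $\Psi_M$, or equivalently from Corollary~\ref{igualtat_R_T}(b), which gives $[1]_{R_S} = \mathrm{ncl}_M(S) = S$ (the last equality because $S$ is already a normal submonoid, hence equal to its own normal closure).

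There is no real obstacle here: the entire content of the corollary is packaged into the earlier results. The content of Proposition~\ref{nucli_es_submonoide_normal} gives one direction (preimages of $1$ under monoid homomorphisms are normal submonoids), and the present corollary gives the converse, so that normal submonoids of $M$ are exactly the preimages $f^{-1}(1)$ for monoid homomorphisms $f$ out of $M$. This parallels the classical group-theoretic fact and justifies calling the elements of $\mathsf{NorSub}(M)$ normal submonoids.
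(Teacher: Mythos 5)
Your proposal is correct and coincides with the paper's own argument: both take $N = M/R_S$ with $f$ the canonical projection and invoke Theorem~\ref{Phi_M_injectiva}(b) to get $f^{-1}(1_N) = [1]_{R_S} = S$. Nothing further is needed.
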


\begin{proof}
It follows from Theorem~\ref{Phi_M_injectiva} that $S$ is the preimage of the identity element by the projection of $M$ onto its quotient $M/R_S$.
\end{proof}

With additional assumptions on $M$ it is possible to go further. For instance, if $M$ is a cancellative commutative monoid satisfying condition (*) of Theorem~\ref{monoides_modulars} then we have the following stronger result.

\begin{prop}\label{Phi_M_mono}
Let $M$ be a cancellative commutative monoid satisfying (*) in Theorem~\ref{monoides_modulars}. Then $\mathsf{NorSub}(M)$ embeds isomorphically onto a sublattice of $\mathsf{Cong}(M)$. 
\end{prop}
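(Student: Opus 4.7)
The plan is to leverage Theorem~\ref{Phi_M_injectiva}, which already hands us that $\Phi_M$ is a one-to-one join-homomorphism. Hence the only remaining content is to verify that $\Phi_M$ preserves meets, i.e.\ that
\[
R_{S_1\cap S_2}\ =\ R_{S_1}\cap R_{S_2}
\]
for every $S_1,S_2\lhd M$. The inclusion ``$\subseteq$'' is immediate from order-preservation of $\Phi_M$, so all the work sits in ``$\supseteq$''.

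To handle this I would transport the problem into the Grothendieck group $\widehat{M}$. Because $M$ is commutative and every normal submonoid is in particular a subsemigroup, the proposition in \S~\ref{congruencia_induida_per_T} gives
\[
(x,y)\in R_S\ \Longleftrightarrow\ \exists\,s,s'\in S\ \text{with}\ x+s=y+s',
\]
which, in the notation of the proof of Theorem~\ref{monoides_modulars}, is exactly the condition $[x,y]\in\widehat{S}$. Applying this to both $S_1$ and $S_2$ yields
\[
(x,y)\in R_{S_1}\cap R_{S_2}\ \Longleftrightarrow\ [x,y]\in \widehat{S}_1\cap\widehat{S}_2.
\]

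The decisive step is then to invoke Theorem~\ref{monoides_modulars}: condition (*) makes the map $f:S\mapsto \widehat{S}$ a lattice isomorphism $\mathsf{NorSub}(M)\cong\mathsf{Sub}(\widehat{M})$, so in particular $\widehat{S}_1\cap\widehat{S}_2=\widehat{S_1\cap S_2}$. Thus $[x,y]=[t,t']$ for some $t,t'\in S_1\cap S_2$, i.e.\ $x+t'=y+t$, which by the same commutative description places $(x,y)$ in $R_{S_1\cap S_2}$, giving the reverse inclusion.

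The main (non-routine) obstacle is precisely the identity $\widehat{S}_1\cap\widehat{S}_2=\widehat{S_1\cap S_2}$: as the remark following the weaker variant of Theorem~\ref{monoides_modulars} makes explicit, this meet-preservation of $f$ can fail without (*), and it is exactly what the hypothesis buys. Once this ingredient is in place, combining with Theorem~\ref{Phi_M_injectiva}(a)--(b) shows that $\Phi_M$ is an order-preserving, join- and meet-preserving injection, hence an isomorphism of $\mathsf{NorSub}(M)$ onto the sublattice $\mathrm{Im}(\Phi_M)\subseteq\mathsf{Cong}(M)$.
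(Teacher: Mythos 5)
Your proof is correct, but it takes a genuinely different route from the paper's. The paper argues directly at the level of elements: from $(x,y)\in R_{S_1}\cap R_{S_2}$ it extracts relations $x+s_1=y+s_2$ and $x+s_1'=y+s_2'$, cancels to get $s_1+s_2'=s_1'+s_2$, observes that every normal submonoid of a monoid satisfying (*) again satisfies (*) (so one may write $s_2'=s_1'+s'$), and then uses invariance of $S_1$ to land $s'$ in $S_1\cap S_2$ and conclude $x=y+s'$. You instead route everything through the Grothendieck group: the observation that $(x,y)\in R_S$ iff $[x,y]\in\widehat{S}$ (a correct consequence of the commutative description of $R_A$ for a subsemigroup $A$) converts the problem into the single identity $\widehat{S}_1\cap\widehat{S}_2=\widehat{S_1\cap S_2}$, which you then read off from the lattice isomorphism $f:\mathsf{NorSub}(M)\cong\mathsf{Sub}(\widehat{M})$ of Theorem~\ref{monoides_modulars} (an order-isomorphism between lattices automatically preserves meets, and meets are intersections on both sides). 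There is no circularity, since that theorem is proved independently of $\Phi_M$. What your approach buys is modularity and brevity -- the only new content is the translation $(x,y)\in R_S\Leftrightarrow[x,y]\in\widehat{S}$, and it makes transparent that the meet-preservation of $\Phi_M$ is literally the meet-preservation of $f$; what the paper's approach buys is a self-contained argument that exposes exactly where cancellativity and condition (*) enter, rather than hiding them inside the black box of Theorem~\ref{monoides_modulars}.
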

 
\begin{proof}
It is enough to see that $\Phi_M$ is also meet-preserving when $M$ is as in the statement. For every normal submonoids $S,S'$ of any monoid $M$ we always have $R_{S\cap S'}\subseteq R_S\cap R_{S'}$ because $R_S\cap R_{S'}$ is a congruence containig $\{1\}\times(S\cap S')$. Conversely, let us assume that $(x,y)\in R_S\cap R_{S'}$. Since $M$ is commutative, this means that there exist $s_1,s_2\in S$, and $s'_1,s'_2\in S'$ such that $x+s_1=y+s_2$, and $x+s'_1=y+s'_2$. Hence
\[
x+s_1+s'_2=y+s_2+s'_2=x+s'_1+s_2
\]
and consequently, $s_1+s'_2=s'_1+s_2$ because $M$ is cancellative. Let us now observe that if $M$ satisfies (*) then every normal submonoid of $M$ also satisfies (*). Indeed, if $s_1,s_2\in S$, with $S\lhd M$, are such that $s_2=s_1+z$ for some $z\in M$ then $z\in LX_S$ and hence, $z=z+0\in S$. Then we may assume that $s'_2=s'_1+s'$ for some $s'\in S'$. Then we have
\[
s'_1+s_2=s_1+s'_2=s_1+s'_1+s'
\]
and hence, $s_2=s_1+s'$, i.e. $s'\in LX_S$. Since $S$ is invariant, it follows that $s'\in S$ and consequently, $s'\in S\cap S'$. Coming back to the initial hypothesis that $x+s_1=y+s_2$ we conclude that
\[
x+s_1=y+s_1+s'
\]
and hence $x=y+s'$, i.e. $(x,y)\in R_{S\cap S'}$.
\end{proof}

\begin{rem}\label{no_preservacio_meets}{\rm
$\Phi_M$ seems to be not meet-preserving in general. Although we always have $R_{S\cap S'}\subseteq R_S\cap R_{S'}$ it seems that the reverse inclusion can be false. In fact, $(x,y)\in R_S\cap R_{S'}$ if and only if $x,y$ can be connected by both a finite sequence of elementary $S$-deformations, and a finite sequence of elementary $S'$-deformations. However, it is not clear that this is enough for the existence of a finite sequence of elementary $(S\cap S')$-deformations connecting them. Similarly, $\Psi_M$ seems to be not join-preserving in general. We always have $[1]_R\vee[1]_{R'}\subseteq [1]_{R\vee R'}$ because $[1]_{R\vee R'}$ is a normal submonoid containing both $[1]_R$ and $[1]_{R'}$. But in general the converse $[1]_{R\vee R'}\subseteq [1]_{R}\vee [1]_{R'}$ seems to be false. }
\end{rem}

\subsection{Normal, and exceptional congruences}
Although $\Phi_M$ is always injective, it is not surjective for a generic monoid. As shown by the examples mentioned in the introduction, for a given normal submonoid $S\lhd M$ the congruence $R_S$ may be just one of several congruences on $M$ having $S$ as the equivalence class of the identity element. This suggests distinguishing the following two types of congruences on, and quotients of, a generic monoid.

\begin{defn}
A congruence $R$ on a monoid $M$ is called {\em normal} if $R=R_S$ for some normal submonoid $S$ of $M$ (equivalently, if $R$ in in the image of $\Phi_M$). Otherwise, it is called {\em exceptional}. Similarly, a quotient of $M$ is called {\em normal} (resp. {\em exceptional}) when it is the quotient of $M$ modulo a normal (resp. exceptional) congruence.
\end{defn}

\begin{ex}{\rm
It follows from Proposition~\ref{submonoides_invariants_de_N} that the non-trivial normal congruences in the additive monoid $\NN_+$ are the congruences $R_{0,n}=\{(i,j)\in\NN\times\NN\,|\,i\equiv j\ \mbox{(mod. $n$)}\}$ for each $n\geq 1$, with $[0]=\langle n\rangle$. Up to isomorphism, the corresponding normal quotients are the cyclic groups $(\ZZ_n,+,0)$.
}
\end{ex}

Let us denote by $\mathsf{NorCong}(M)$ the subset of $\mathsf{Cong}(M)$ consisting of the normal congruences on $M$ ordered by inclusion. Then Theorem~\ref{Phi_M_injectiva} can be restated as follows. 

\begin{cor}\label{NorCon(M)}
For every monoid $M$, $\mathsf{NorCon}(M)$ is a complete lattice isomorphic to $\mathsf{NorSub}(M)$.
\end{cor}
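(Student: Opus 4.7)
The plan is to derive the corollary directly from Theorem~\ref{Phi_M_injectiva} by exhibiting an order-isomorphism between $\mathsf{NorSub}(M)$ and $\mathsf{NorCong}(M)$, and then invoking the general fact that order-isomorphisms transport all existing suprema and infima.

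First I would consider the corestriction $\widetilde{\Phi}_M\colon\mathsf{NorSub}(M)\to\mathsf{NorCong}(M)$ of the map $\Phi_M$ to its image. By the very definition of $\mathsf{NorCong}(M)$ as the image of $\Phi_M$, this corestriction is surjective, and by Theorem~\ref{Phi_M_injectiva}(b) it is injective (indeed, $\Psi_M\circ\Phi_M=\mathrm{id}_{\mathsf{NorSub}(M)}$, so $\Phi_M$, and hence $\widetilde{\Phi}_M$, is one-to-one). Thus $\widetilde{\Phi}_M$ is a bijection with inverse $\widetilde{\Psi}_M\colon\mathsf{NorCong}(M)\to\mathsf{NorSub}(M)$ equal to the restriction of $\Psi_M$ to $\mathsf{NorCong}(M)$, i.e.\ $R\mapsto[1]_R$.

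Next I would check that both $\widetilde{\Phi}_M$ and $\widetilde{\Psi}_M$ are order-preserving, which is immediate from Theorem~\ref{Phi_M_injectiva}(a): $\Phi_M$ is a join-homomorphism and $\Psi_M$ is a meet-homomorphism between the ambient posets ordered by inclusion, and both properties a fortiori imply monotonicity. Consequently $\widetilde{\Phi}_M$ is an isomorphism of posets between $\mathsf{NorSub}(M)$ (with inclusion) and $\mathsf{NorCong}(M)$ (with the order inherited from $\mathsf{Cong}(M)$).

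Finally I would invoke the elementary lattice-theoretic fact that any order-isomorphism between posets preserves every existing supremum and infimum (because joins and meets can be characterised purely in terms of the ordering). Since $\mathsf{NorSub}(M)$ is a complete lattice (as noted at the beginning of Subsection~2.4), transferring the structure along $\widetilde{\Phi}_M$ shows that $\mathsf{NorCong}(M)$ is also a complete lattice and that $\widetilde{\Phi}_M$ is an isomorphism of complete lattices. The only subtle point worth flagging is that the meet in $\mathsf{NorCong}(M)$ transported from $\mathsf{NorSub}(M)$ is not in general the meet inherited from $\mathsf{Cong}(M)$ (cf.\ Remark~\ref{no_preservacio_meets}); but this does not affect the argument, since the completeness and the isomorphism statement refer to the intrinsic lattice structure of $\mathsf{NorCong}(M)$ as transported via $\widetilde{\Phi}_M$. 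No genuine obstacle is expected; the whole statement is essentially a repackaging of Theorem~\ref{Phi_M_injectiva} together with the observation that an order-isomorphism is automatically a complete-lattice isomorphism whenever either side is a complete lattice.
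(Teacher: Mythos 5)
Your proposal is correct and follows essentially the same route as the paper: the paper's proof likewise observes that the corestriction of $\Phi_M$ to $\mathsf{NorCong}(M)$ is an order-preserving bijection with order-preserving inverse $R\mapsto[1]_R$, and lets the complete-lattice structure transfer along this order-isomorphism. Your extra remark that the transported meet need not be the meet inherited from $\mathsf{Cong}(M)$ is a correct and worthwhile clarification, consistent with the paper's own comment following the corollary.
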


\begin{proof}
It follows from Theorem~\ref{Phi_M_injectiva} that the corestriction of $\Phi_M$ to $\mathsf{NorCon}(M)$ is an order-preserving bijection from $\mathsf{NorSub}(M)$ to $\mathsf{NorCon}(M)$ whose inverse map is given by $R\mapsto [1]_R$ and hence, order-preserving.
\end{proof}

Let us emphasize that $\mathsf{NorCon}(M)$ seems to be just a join subsemilattice of $\mathsf{Cong}(M)$ because the meet of two generic normal congruences $R_S,R_{S'}$ in $\mathsf{NorCong}(M)$ is $R_{S\cap S'}$ but not necessarily in $\mathsf{Cong}(M)$ (cf. Remark~\ref{no_preservacio_meets} above).

\subsection{The set of congruences as a ``blow up'' of the set of normal submonoids}\label{blow_up}
For each normal submonoid $S$ of $M$ let be
\[
\mathsf{Cong}_S(M):=\{R\in\mathsf{Cong}(M)\,|\,[1]_R=S\}.
\]
Notice that $\mathsf{Cong}_S(M)\subseteq\mathsf{Cong}(M,S)$, and that it contains the minimal element $R_S$, and unique normal congruence in  $\mathsf{Cong}(M,S)$. When $S=\{1\}$ we shall write $\mathsf{Cong}_1(M)$ (or $\mathsf{Cong}_0(M)$ in the additive case) instead of $\mathsf{Cong}_{\{1\}}(M)$. Its elements are the {\em unital congruences} on $M$ mentioned in the introduction.

When ordered by inclusion, $\mathsf{Cong}_S(M)$ is a meet subsemilattice of $\mathsf{Cong}(M,S)$ and hence, of $\mathsf{Cong}(M)$. However, it is not a join subsemilattice in general. Thus if $R,R'\in\mathsf{Cong}_1(M)$ then their join $R\vee R'$ in $\mathsf{Cong}(M,S)$ (and in $\mathsf{Cong}(M)$) need not be a congruence in $\mathsf{Cong}_S(M)$ because we only have the inclusion $[1]_R\vee[1]_{R'}\subseteq[1]_{R\vee R'}$.

It readily follows from Theorem~\ref{Phi_M_injectiva} that, as a set, $\mathsf{Cong}(M)$ is in bijection with the ``blow up'' of $\mathsf{NorSub}(M)$ (equivalently, of $\mathsf{NorCong}(M)$; cf. Corollary~\ref{NorCon(M)}) obtained when each normal submonoid $S$ (normal congruence $R_S$) is replaced by the set $\mathsf{Cong}_S(M)$. It turns out that, up to isomorphism, $\mathsf{Cong}_S(M)$ is given by the set of unital congruences on the quotient monoid $M/R_S$, or $M/S$ for short. More precisely, the following holds true.

\begin{prop}\label{iso_congruencies_unitaries_a_M/S}
Let be $M$ a monoid, and $S\lhd M$. Then $\mathsf{Cong}_S(M)$ is isomorphic, as a meet semilattice, to $\mathsf{Cong}_1(M/S)$. In particular, $R_S$ is mapped through the isomorphism to the trivial unital congruence on $M/S$.
\end{prop}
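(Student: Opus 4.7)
The plan is to use the standard correspondence theorem for congruences (the monoid analogue of the third isomorphism theorem) and then specialize it. First I would observe that every congruence $R \in \mathsf{Cong}_S(M)$ automatically satisfies $R_S \subseteq R$, since $\mathsf{Cong}_S(M) \subseteq \mathsf{Cong}(M,S)$ and $R_S$ is by construction the minimum element of $\mathsf{Cong}(M,S)$. Consequently the map
\[
\rho : \{R \in \mathsf{Cong}(M) : R_S \subseteq R\} \longrightarrow \mathsf{Cong}(M/S), \qquad R \longmapsto \bar{R},
\]
where $\bar{R} := \{([x]_{R_S},[y]_{R_S}) : (x,y) \in R\}$, is well defined, and its inverse is the ``pullback along the projection'' map $\pi^*: Q \mapsto \{(x,y) \in M \times M : ([x]_{R_S},[y]_{R_S}) \in Q\}$, where $\pi : M \to M/S$ is the canonical projection. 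Both $\rho$ and $\pi^*$ are order-preserving and commute with intersections, so they define a meet-semilattice isomorphism between the sublattices they range over.

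Next I would show that this isomorphism restricts to the claimed bijection $\mathsf{Cong}_S(M) \cong \mathsf{Cong}_1(M/S)$ by computing the identity-class on each side. If $R \in \mathsf{Cong}_S(M)$, then
\[
[1_{M/S}]_{\bar{R}} = \{[x]_{R_S} : (x,1) \in R\} = \{[x]_{R_S} : x \in S\} = \{[1]_{R_S}\},
\]
using that $[1]_R = S$ and that $S \subseteq [1]_{R_S}$ (so all elements of $S$ represent the same class modulo $R_S$); thus $\bar{R}$ is unital. Conversely, if $Q \in \mathsf{Cong}_1(M/S)$, then $x \in [1]_{\pi^*(Q)}$ iff $([x]_{R_S}, [1]_{R_S}) \in Q$ iff $[x]_{R_S} = [1]_{R_S}$ iff $x \in [1]_{R_S} = S$ (the last equality by Theorem~\ref{Phi_M_injectiva}(b)). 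So $[1]_{\pi^*(Q)} = S$, and $\pi^*(Q) \in \mathsf{Cong}_S(M)$.

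Finally, the image of $R_S$ under $\rho$ is $\bar{R_S} = \{([x]_{R_S},[y]_{R_S}) : (x,y) \in R_S\}$, which is precisely the diagonal of $M/S$, i.e. the trivial unital congruence on $M/S$; together with the meet-preservation coming from the correspondence theorem, this completes the statement.

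The only real subtlety is the identity-class computation, which rests crucially on the fact from Theorem~\ref{Phi_M_injectiva}(b) that $[1]_{R_S} = S$ exactly (not merely $\supseteq S$); without that equality the claim ``$\bar{R}$ unital $\iff [1]_R = S$'' could fail. The rest is the routine correspondence between congruences on $M$ containing a fixed congruence and congruences on the corresponding quotient.
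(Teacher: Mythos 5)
Your proof is correct and follows essentially the same route as the paper: the same two maps $R\mapsto\bar R$ and $Q\mapsto\pi^*(Q)$, with the key step in both cases being the identity-class computation that shows the correspondence restricts to a bijection between $\mathsf{Cong}_S(M)$ and $\mathsf{Cong}_1(M/S)$, using $[1]_{R_S}=S$. The only difference is cosmetic: you package the well-definedness and mutual inverseness as an instance of the general correspondence theorem for congruences above $R_S$, whereas the paper verifies these facts directly by hand.
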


\begin{proof}
Let us first observe that every congruence $R\in\mathsf{Cong}_S(M)$ induces a unital congruence on the quotient $M/S$ given by
\[
\tilde{R}:=\{([x]_{R_S},[y]_{R_S})\in M/S\times M/S\,|\, (x,y)\in R\}.
\]
Indeed, if $(x,y)\in R$, and $x',y'\in M$ are such that $(x',x)\in R_S$, $(y,y')\in R_S$ then we also have $(x,x')\in R$, and $(y,y')\in R$ because $R_S$ is the minimum of $\mathsf{Cong}_S(M)$. By transitivity, it follows that $(x',y')\in R$ and hence, $\tilde{R}$ is a well defined binary relation on $M/S$. Moreover, it is immediate to check that $\tilde{R}$ is a congruence because this is true for $R$. Finally, $[x]_{R_S}\tilde{R}\,[1]_{R_S}$ means that $x\,R\,1$ and hence, $x\in[1]_{R}=S=[1]_{R_S}$, i.e. $\tilde{R}$ is unital. In particular, when $R$ is $R_S$ then $\tilde{R}$ is the trivial congruence on $M/S$. Conversely, every unital congruence $T$ on $M/S$ induces a congruence $T^*$ on $M$ given by
\[
T^*:=\{(x,y)\in M\times M\,|\,([x]_{R_S},[y]_{R_S})\in T\},
\]
and $T^*$ is such that $[1]_{T^*}=S$. Indeed, $[[1]_{R_S}]_T=\{[1]_{R_S}\}$ because $T$ is unital. Hence for each $x\in M$ we have
\[
(1,x)\in T^*\ \Leftrightarrow\ ([1]_{R_S},[x]_{R_S})\in T \ \Leftrightarrow [x]_{R_S}=[1]_{R_S}\ \Leftrightarrow\ (1,x)\in R_S\ \Leftrightarrow\ x\in S.
\]
Then it is easy to check that the maps $t_S:\mathsf{Cong}_S(M)\to\mathsf{Cong}_1(M/S)$, and $t^*_S:\mathsf{Cong}_1(M/S)\to\mathsf{Cong}_S(M)$ respectively given by $R\mapsto\tilde{R}$, and $T\mapsto T^*$ are inverse of each other. Moreover, both are order-preserving and hence, they define a meet semilattice isomorphism.
\end{proof}

This result, together with Theorem~\ref{Phi_M_injectiva}, leads to the following description of the set of congruences on an arbitrary monoid.

\begin{thm}\label{conjunt_congruencies_M}
For every monoid $M$ there is a (set-theoretic) bijection
\begin{equation}\label{formula_Cong(M)}
\mathsf{Cong}(M)\longrightarrow\bigcup_{S\in\mathsf{NorSub}(M)}\mathsf{Cong}_1(M/S)
\end{equation}
given by $R\mapsto t_{[1]_R}(R)$, and whose inverse is given by $T\mapsto t^*_S(T)$ for each $T\in\mathsf{Cong}_1(M/S)$. Moreover, the exceptional congruences on $M$ are mapped through this bijection to the non-trivial unital congruences on the normal quotients of $M$. 
\end{thm}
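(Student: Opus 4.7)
The plan is to assemble this theorem as a direct corollary of the preceding results, essentially by partitioning $\mathsf{Cong}(M)$ by the identity-element equivalence class and invoking Proposition~\ref{iso_congruencies_unitaries_a_M/S} on each block. First I would observe that by Proposition~\ref{lema_classe_neutre} every congruence $R$ on $M$ satisfies $[1]_R\in\mathsf{NorSub}(M)$, and that the subsets $\mathsf{Cong}_S(M)$ for different $S\in\mathsf{NorSub}(M)$ are pairwise disjoint (since $R\in\mathsf{Cong}_S(M)\cap\mathsf{Cong}_{S'}(M)$ would force $S=[1]_R=S'$). Consequently
\[
\mathsf{Cong}(M)=\bigsqcup_{S\in\mathsf{NorSub}(M)}\mathsf{Cong}_S(M).
\]

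Next I would apply Proposition~\ref{iso_congruencies_unitaries_a_M/S} block by block: for each $S\in\mathsf{NorSub}(M)$ the map $t_S:\mathsf{Cong}_S(M)\to\mathsf{Cong}_1(M/S)$ is a bijection with inverse $t^*_S$. Gluing these bijections along the disjoint decomposition gives the global map (\ref{formula_Cong(M)}). Concretely, any $R\in\mathsf{Cong}(M)$ lies in the unique block indexed by $S=[1]_R$, so $R\mapsto t_{[1]_R}(R)$ is well defined and lands in $\mathsf{Cong}_1(M/[1]_R)$; conversely, any element on the right is of the form $T\in\mathsf{Cong}_1(M/S)$ for a unique $S$, and $t^*_S(T)\in\mathsf{Cong}_S(M)\subseteq\mathsf{Cong}(M)$. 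The identities $t^*_S\circ t_S=\mathrm{id}$ and $t_S\circ t^*_S=\mathrm{id}$ from Proposition~\ref{iso_congruencies_unitaries_a_M/S} then immediately give that the two maps are mutually inverse.

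For the final assertion about exceptional congruences, recall that a congruence is normal precisely when it equals $R_{S}$ for some $S\in\mathsf{NorSub}(M)$; by Theorem~\ref{Phi_M_injectiva}(b) the submonoid $S$ is necessarily $[1]_R$, so the unique normal congruence inside each block $\mathsf{Cong}_S(M)$ is $R_S$ itself. By the last sentence of Proposition~\ref{iso_congruencies_unitaries_a_M/S}, $t_S(R_S)$ is the trivial unital congruence on $M/S$. Therefore $R$ is exceptional iff $R\neq R_{[1]_R}$ iff $t_{[1]_R}(R)$ is a non-trivial element of $\mathsf{Cong}_1(M/[1]_R)$, which is exactly the claim. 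There is essentially no obstacle here beyond bookkeeping; the only subtle point, already guaranteed by Proposition~\ref{iso_congruencies_unitaries_a_M/S} and Theorem~\ref{Phi_M_injectiva}, is that the trivial congruence on $M/S$ does correspond precisely to the normal congruence $R_S$ and to nothing else in the block $\mathsf{Cong}_S(M)$.
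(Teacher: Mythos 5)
Your proof is correct and follows exactly the route the paper intends: the theorem is stated there as an immediate consequence of Proposition~\ref{iso_congruencies_unitaries_a_M/S} and Theorem~\ref{Phi_M_injectiva}, and your write-up simply makes explicit the partition $\mathsf{Cong}(M)=\bigsqcup_{S}\mathsf{Cong}_S(M)$ and the block-by-block gluing of the bijections $t_S$, $t^*_S$. The identification of the exceptional congruences with the non-trivial unital ones via $R$ normal $\Leftrightarrow R=R_{[1]_R}$ is likewise the argument the paper relies on.
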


\noindent
This theorem reduces the computation of the whole set of congruences on an arbitrary monoid $M$ to the computation of:
\begin{itemize}
\item[(1)] its set $\mathsf{NorSub}(M)$ of normal submonoids (equivalently, normal congruences), 
\item[(2)] the normal quotient $M/S$ for each normal submonoid $S$ of $M$, and
\item[(3)] the set $\mathsf{Cong}_1(M/S)$ of unital congruences on each normal quotient $M/S$.
\end{itemize}
As it becomes clear from the bijection (\ref{formula_Cong(M)}), the higher complexity of the theory of congruences on generic monoids with respect to the corresponding theory for groups ultimately comes from the existence of non-trivial unital congruences on a monoid. For every group $G$ there is only one unital congruence on $G$, so that every term in the right hand side of (\ref{formula_Cong(M)}) is a singleton. In fact, the following characterizations of the unital congruences on a generic monoid are straightforward.

\begin{prop}
Let be $R$ a congruence on a monoid $M$. Then the following are equivalent:
\begin{itemize}
\item[(i)] $R$ is unital;
\item[(ii)] for each $x,y\in M$ if $(x,y)\in R$, and $x\in U(M)$ then $x=y$;
\item[(iii)] the restriction of $R$ to the group of units $U(M)$ is the identity relation $\Delta_{U(M)}$.
\end{itemize}
\end{prop}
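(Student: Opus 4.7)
The plan is to verify the cycle $(i) \Rightarrow (ii) \Rightarrow (iii) \Rightarrow (i)$. The essential tool throughout is the ``multiply by $(x^{-1}, x^{-1})$'' trick that exploits compatibility of $R$ together with the invertibility of the elements of $U(M)$; once that observation is in hand, each implication is a one-line argument.

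For $(i) \Rightarrow (ii)$ I would argue as follows: given $(x,y) \in R$ with $x \in U(M)$, reflexivity of $R$ gives $(x^{-1}, x^{-1}) \in R$, and compatibility with multiplication then yields $(1, x^{-1}y) \in R$. The unital hypothesis forces $x^{-1}y \in [1]_R = \{1\}$, so $y = x$. For $(ii) \Rightarrow (iii)$, any pair $(u,v) \in R \cap (U(M) \times U(M))$ satisfies $u = v$ by direct application of $(ii)$, and the diagonal $\Delta_{U(M)}$ is automatically contained in $R$ by reflexivity; these two containments give the desired equality. For $(iii) \Rightarrow (i)$, given $y \in [1]_R$ one has $(1, y) \in R$ with $1 \in U(M)$, and reading $(iii)$ as the assertion that the $R$-class of every unit reduces to that unit itself then yields $y = 1$, so $[1]_R = \{1\}$.

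I do not anticipate any real obstacle. The only genuinely content-bearing step is the multiplication trick in $(i) \Rightarrow (ii)$, which is the exact monoidal analogue of the classical reduction from $(x,y)$ to $(1, x^{-1}y)$ that makes every congruence on a group determined by the identity class; the remaining implications are definitional repackagings. It is worth noting in the write-up that the multiplication trick succeeds precisely because $x \in U(M)$: the same argument fails when neither entry of a pair is a unit, and this is ultimately the reason why $[1]_R$ alone does not determine $R$ for a general monoid, which is the whole motivation for Section~3 and for distinguishing unital from exceptional congruences.
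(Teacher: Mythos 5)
Your proofs of (i) $\Rightarrow$ (ii) and (ii) $\Rightarrow$ (iii) are correct, and the multiplication-by-$(x^{-1},x^{-1})$ trick in the first implication is exactly the paper's argument. The genuine problem is (iii) $\Rightarrow$ (i). You say you ``read (iii) as the assertion that the $R$-class of every unit reduces to that unit itself,'' but that is condition (ii), not condition (iii). Under the standard meaning of the restriction of $R$ to $U(M)$, namely $R\cap(U(M)\times U(M))$, condition (iii) only asserts that two \emph{units} which are $R$-related are equal; it says nothing about non-units lying in the class of a unit. From $y\in[1]_R$ you do get $(1,y)\in R$ with $1\in U(M)$, but unless $y$ is itself a unit you cannot invoke (iii) to conclude $y=1$, and $[1]_R$ need not be contained in $U(M)$. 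Concretely, take $M=\NN_+$ and $R=R_{0,2}$ in the notation of the introduction: then $[0]_R=\langle 2\rangle\neq\{0\}$, so $R$ is not unital, while $U(\NN_+)=\{0\}$ and the restriction of $R$ to $\{0\}$ is trivially $\Delta_{\{0\}}$. So (iii) holds and (i) fails; the implication you are asked to prove is false as literally stated.

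To be fair, the gap lies in the proposition at least as much as in your write-up: the paper's own proof establishes only (i) $\Rightarrow$ (ii) and then breaks off mid-sentence with the unproved assertion that $R$ is unital if $R_{\downarrow U(M)}=\Delta_{U(M)}$, which is precisely the direction that fails. The correct repair is either to strengthen (iii) to ``every $R$-class meeting $U(M)$ is a singleton'' (which is just a reformulation of (ii)), or to drop (iii) altogether and retain the equivalence (i) $\Leftrightarrow$ (ii), which your argument does establish cleanly. You should flag this explicitly rather than silently substituting the stronger reading of (iii).
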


\begin{proof}
(i) $\Rightarrow$ (ii). Let $R$ be unital, so that $[1]_R=\{1\}$, and let be $x,y\in M$ such $x\in U(M)$, and $(x,y)\in R$. Then $(1,x^{-1}y),(1,yx^{-1})\in R$ and hence, $x^{-1}y=yx^{-1}=1$, i.e. $x=y$. Let us denote by $R_{\downarrow U(M)}$ the restriction of $R$ to $U(M)$. Clearly, $R$ is unital if $R_{\downarrow U(M)}=\Delta_{U(M)}$., and 
\end{proof}

\subsection{Congruentially simple monoids}
As poined out before, the exceptional congruences on a monoid $M$ are in bijection with the non-trivial unital congruences on its normal quotients. This suggests introducing the following definition.

\begin{defn}
A monoid $M$ is called {\em congruentially simple} if its only normal quotient having non-trivial unital congruences is the monoid $M$ as normal quotient of itself.
\end{defn}

For such monoids, the equivalence class of the identity element is trivial in all exceptional congruences, and for every normal submonoid $S\neq\{1\}$ there is a unique congruence $R$ such that $[1]_R=S$. In other words, a monoid is congruentially simple if all its congruences are uniquely determined by the equivalence class of the identity element except when this class is trivial. Once more, the question arises whether every monoid is congruentially simple, and in case it is no, to identify families of monoids which are congruentially simple.

\begin{ex}{\rm
$\NN_+$ is congruentially simple. Indeed, it follows from Proposition~\ref{submonoides_invariants_de_N} and Example~\ref{congruencies_normals_a_N} that, up to isomorphism, the only non-trivial, proper normal quotients of $\NN_+$ are the additive groups $\ZZ_n$ for each $n\geq 2$. Being all of them groups, there are no non-trivial unital congruences on them. Hence $\mathsf{Cong}(\NN_+)$ consists of the normal congruences $\Delta_\NN$ and $R_{0,n}$ for each $n\geq 1$, together with the non-trivial unital congruences on $\NN_+$.
}
\end{ex}

As discussed in the next subsection, the finite full transformation monoids $T_n$ are also congruentially simple.

\subsection{Malcev's theorem revisited}
In a now classical paper from 1952, Malcev computed the lattice of congruences of the finite full transformation monoids $T_n$, $n\geq 1$ \cite{Malcev1952}. The computation starts with the fact that the lattice of ideals of $T_n$ is given by the chain
\[
I_1\subseteq I_2\subseteq\cdots\subseteq I_n=T_n,
\]
where $I_k$ stands for the set of endomorphisms of $\nbf:=\{1,\ldots,n\}$ of rank $\leq k$ (by the rank of such an endomorphism it is meant the cardinal of its image). Malcev realized that for every congruence $R$ on $T_n$ different from the uniform congruence $\nabla_{T_n}:=T_n\times T_n$ there is some $k\in\{1,\ldots,n\}$, and some normal subgrup $N\lhd S_k$ such that $R$ is the identity on $I_n\setminus I_k$, it identifies all endomorphisms in $I_{k-1}$, and its restriction to $I_k\setminus I_{k-1}$ is completely given by $N$. To be precise, two endomorphisms $u,v\in I_k\setminus I_{k-1}$ are equivalent if and only if $u=v$ or the following three conditions hold:
\begin{itemize}
\item[(1)] both have the same image $\{i_1,\ldots,i_k\}\subseteq\nbf$, 
\item[(2)] $u^{-1}(i_j)=v^{-1}(i_j)$ for each $j=1,\ldots,k$, and 
\item[(3)] there exists a permutation $\tau\in N$ such that $v=\tau u$.
\end{itemize}
Let be $R_{k,N}$ the congruence so defined by the pair $(k,N)$. For instance, $R_{1,S_1}$ is the identity congruence $\Delta_{T_n}$, and $R_{k,\{1\}}$ for any $k\geq 2$ is the Rees congruence $R_{I_{k-1}}$ mentioned in the introduction. Clearly, if $k\leq k'$ then $R_{k,N}\subseteq R_{k',N'}$ for every normal subgroups $N\lhd S_k$, and $N'\lhd S_{k'}$ with $N\subseteq N'$ when $k=k'$. It follows that the whole lattice of congruences on $T_n$ is given by the chain
\begin{align*}
&\mbox{if $n=2$:}\ \ \Delta_{T_2}\subseteq R_{I_1}\subseteq R_{2,S_2}\subseteq\nabla_{T_2} ,
\\
&\mbox{if $n=3$:}\ \ \Delta_{T_3}\subseteq R_{I_1}\subseteq R_{2,S_2}\subseteq R_{I_2}\subseteq R_{3,A_3}\subseteq R_{3,S_3}\subseteq\nabla_{T_3},
\\
&\mbox{if $n=4$:}\ \ \Delta_{T_4}\subseteq R_{I_1}\subseteq R_{2,S_2}\subseteq R_{I_2}\subseteq R_{3,A_3}\subseteq R_{3,S_3}\subseteq  R_{I_3}\subseteq R_{4,K_4}\subseteq R_{4,A_4}\subseteq R_{4,S_4}\subseteq\nabla_{T_4},
\end{align*}
and by the chain
\begin{align}
\Delta_{T_n}\subseteq R_{I_1}\subseteq R_{2,S_2}&\subseteq R_{I_2}\subseteq R_{3,A_3}\subseteq R_{3,S_3}\subseteq  R_{I_3} \nonumber
\\
&\subseteq R_{4,K_4}\subseteq R_{4,A_4}\subseteq R_{4,S_4}\subseteq
\cdots\subseteq R_{I_{n-1}}\subseteq R_{n,A_n}\subseteq R_{n,S_n}\subseteq\nabla_{T_n},\label{reticle_congruencies_Tn}
\end{align}
for each $n\geq 5$, where $K_4$ denotes the Klein four-group; see \cite{Ganyushkin-Mazorchuk2009} for a modern presentation of the subject, including the computation of the lattice of congruences of the related monoids $PT_n$ of partial transformations, and $IS_n$ of partial injective transformations. Notice that the equivalence class of the identity element in $R_{k,N}$ is
\[
[1]_{R_{k,N}}=\left\{
\begin{array}{ll}
\{1\}, & \mbox{if $k<n$,}
\\
N, & \mbox{if $k=n$.}
\end{array}\right.
\]
Hence the normal congruences on $T_n$ are the congruences $\Delta_{T_n}$, $R_{n,A_n}$, $R_{n,S_n}$, and $\nabla_{T_n}$ (and $R_{4,K_4}$ when $n=4$), and all exceptional congruences are in this case unital, i.e. $T_n$ is congruentially simple. 

Ulterior works devoted to the computation of the lattice of congruences on other specific families of monoids often follow Malcev strategy, starting with the lattice of ideals of the monoid or, more precisely, with some ascending chain of ideals whose union is the whole monoid; see, in particular, the works by East and Ruskuc (\cite{East-Ruskucd},\cite{East-Ruskuc2022b},\cite{East-Ruskuc2022a}, \cite{East-Ruskuc2022c}). However, it is also possible to address the problem using Theorem~\ref{conjunt_congruencies_M}, whose starting point is the lattice of normal submonoids. As mentioned above, and although at some point we may be forced to converge to a strategy similar to Malcev's, in particular, in the computation of the lattice of unital congruences of the normal quotients, this theorem offers a new strategy, and this new strategy may allow us to achieve some results in a different way. This is the case of the congruential simplicity of $T_n$, which can be proved directly without using Malcev result.

Indeed, let us consider the generic case $n\geq 5$. We know from Theorem~\ref{conjunt_congruencies_M} that the exceptional congruences on $T_n$ are in bijection with the non-trivial unital congruences on the normal quotients of $T_n$. Hence a path to identify them consists of computing first all normal quotients, and then looking for the unitary congruences on each of these quotients. 

\begin{prop}\label{quocients_normals_Tn}
Up to isomorphism, the non-trivial, proper normal quotients of $T_n$ for $n\geq 5$ are the multiplicative monoids $\{0,1\}$ and $\{-1,0,1\}$.
\end{prop}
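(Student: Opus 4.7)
Example~\ref{reticle_submonoides_normals_Tn} tells us that for $n\geq 5$ the lattice $\mathsf{NorSub}(T_n)$ is the chain $\{1\}\subset A_n\subset S_n\subset T_n$, yielding at most four normal congruences. Two of these are degenerate: $R_{\{1\}}=\Delta_{T_n}$ gives $T_n/R_{\{1\}}=T_n$, and $R_{T_n}=\nabla_{T_n}$ gives the singleton quotient. The task thus reduces to identifying $T_n/R_{S_n}$ and $T_n/R_{A_n}$ as $(\{0,1\},\cdot)$ and $(\{-1,0,1\},\cdot)$ respectively.

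My strategy in each case is to exhibit a surjective monoid homomorphism with the expected identity-preimage, and then to show it factors as an isomorphism through the corresponding $R_S$. Define $h_1:T_n\to(\{0,1\},\cdot)$ by $h_1(\pi)=1$ for $\pi\in S_n$ and $h_1(f)=0$ for $f\notin S_n$, and $h_2:T_n\to(\{-1,0,1\},\cdot)$ by $h_2(\pi)=\mathrm{sgn}(\pi)$ and $h_2(f)=0$. Both are monoid homomorphisms because $T_n\setminus S_n$ is a two-sided ideal (composition with a rank-$<n$ map cannot raise the rank). The identity-preimages are $S_n$ and $A_n$ respectively, so by the minimality of $R_{S_n}$ in $\mathsf{Cong}(T_n,S_n)$ (resp.\ of $R_{A_n}$ in $\mathsf{Cong}(T_n,A_n)$), the kernels of $h_1$ and $h_2$ contain $R_{S_n}$ and $R_{A_n}$, yielding surjections $T_n/R_{S_n}\twoheadrightarrow\{0,1\}$ and $T_n/R_{A_n}\twoheadrightarrow\{-1,0,1\}$.

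The remaining work is showing that these surjections are bijective, i.e., that each fibre of $h_i$ is a single $R_{S_n}$-, resp.\ $R_{A_n}$-class. The permutation fibres are immediate (two odd permutations $\pi,\sigma$ satisfy $\pi\sigma^{-1}\in A_n\sim 1$). The non-permutation fibre I plan to treat using the classical generation fact $T_n=\langle S_n\cup\{e\}\rangle$, taking $e$ to be the rank-$(n-1)$ idempotent fixing $1,\ldots,n-1$ and sending $n$ to $n-1$; a direct check gives $e\tau=e$ for the transposition $\tau=(n-1,n)$. Any non-permutation $f$ admits a word representation $f=\sigma_0 e\sigma_1 e\cdots e\sigma_m$ with $\sigma_i\in S_n$ and $m\geq 1$. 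Modulo $R_{S_n}$ every $\sigma_i\sim 1$ and $e^m=e$, so $f\sim e$, finishing the first case. Modulo $R_{A_n}$ each $\sigma_i$ is equivalent to $1$ or to $\tau$, and the identity $e\tau e=(e\tau)e=e^2=e$ lets every internal chunk $e\sigma_i e$ ($1\leq i\leq m-1$) collapse, so $f$ reduces to either $e$ or $\tau e$.

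The main obstacle is the remaining equivalence $\tau e\sim e$ modulo $R_{A_n}$, which is not automatic. My plan is to produce it by a single elementary $A_n$-deformation: factor $\tau e=(\tau e)\cdot e$ (valid since $e^2=e$) and insert $a=(1,2)(n-1,n)\in A_n$ to obtain $\tau e\sim_{R_{A_n}}\tau e\,a\,e$. An explicit computation using the definitions of $e$, $\tau$ and $a$ shows $\tau e\,a\,e=(1,2)(n-1,n)\cdot e$, an element of $A_n\cdot e$, hence $R_{A_n}$-equivalent to $e$ via the compatibility of $R_{A_n}$ with $(1,2)(n-1,n)\sim 1$. The construction needs $n\geq 4$ so that the two disjoint transpositions are available. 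Once $\tau e\sim e$ is in hand, the induced multiplication tables---$[e]^2=[e]$, $[\tau]^2=[1]$, $[e][\tau]=[e\tau]=[e]$, $[\tau][e]=[\tau e]=[e]$---identify the two quotients with $(\{0,1\},\cdot)$ and $(\{-1,0,1\},\cdot)$, completing the proof.
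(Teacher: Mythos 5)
Your proof is correct, but it takes a genuinely different route from the paper's. Both arguments begin the same way---reading off the normal submonoids from Example~\ref{reticle_submonoides_normals_Tn}, setting up the homomorphisms onto $\{0,1\}$ and $\{-1,0,1\}$, and reducing everything to showing that each fibre is a single congruence class---but the technical core differs. The paper proves that every non-permutation is $R$-equivalent to the constant map $c_1$ by induction on the rank: at each step it constructs an explicit elementary $S_n$- (resp.\ $A_n$-) deformation, inserting a transposition (resp.\ a $3$-cycle) that strictly lowers the rank. You instead invoke the classical generation $T_n=\langle S_n\cup\{e\}\rangle$ for a rank-$(n-1)$ idempotent $e$, write any non-permutation as an alternating word, and collapse it using the honest identities $e\tau=e$ and $e\tau e=e$ together with the single non-trivial deformation $\tau e\sim_{R_{A_n}}\tau e\,a\,e=ae\sim e$ for $a=(1,2)(n-1,n)$; your computation here checks out (with the convention that products compose right-to-left, as in the paper), and $n\geq 5$ comfortably supplies the two disjoint transpositions. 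Your version is shorter and isolates the one place where an elementary deformation is genuinely needed, at the price of importing the generation theorem as an external fact; the paper's rank induction is longer but self-contained, and it lands every non-permutation on the rank-one constants rather than on a corank-one idempotent. Either way, the factored homomorphisms (or the induced multiplication tables) complete the identification of the two quotients.
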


\begin{proof}
It follows from Example~\ref{reticle_submonoides_normals_Tn} that the only non-trivial, proper normal quotients of $T_n$ for $n\geq 5$ are $T_n/S_n$, and $T_n/A_n$. Hence it is enough to prove that:
\begin{itemize}
\item[(a)] $T_n/S_n\cong(\{0,1\},\cdot,1)$;
\item[(b)] $T_n/A_n\cong(\{-1,0,1\},\cdot,1)$.
\end{itemize}

\medskip
\noindent
\underline{Proof of (a)}. Recall that $T_n/S_n$ means the quotient of $T_n$ modulo the smallest congruence $R=R_{S_n}$ on $T_n$ such that $[1]_{R}=S_n$. In particular, for each $u\in S_n$ we have $[u]_R=S_n$. It turns out that $[u]_R=T_n\setminus S_n$ if $u\notin S_n$. To be precise, if $c_1:\nbf\to\nbf$ denotes the constant map defined by $c_1(k)=1$ for each $k\in\nbf$ let us prove that
\begin{equation}\label{classe_c1}
(u,c_1)\in R \ \Leftrightarrow\ u\notin S_n.
\end{equation}
The implication to the right is an immediate consequence of the fact that $[1]_R=S_n$. Thus if $(u,c_1)\in R$ for some $u\in S_n$ then we also have $(1,c_1)\in R$ and hence, $c_1\in[1]_R=S_n$ which is clearly false. To prove the converse, we proceed by induction on the rank $r\in\{1,\ldots,n-1\}$ of $u$ (i.e. on the cardinal of its image). If $r=1$ then $u$ is the constant map $c_i$ sending every $k\in\nbf$ to $i$ for some $i\in\nbf$, and $c_1$ is clearly an elementary $S_n$-deformation of $c_i$. Indeed, $c_1$ is the composite of $c_i$ with the transposition $(1i)\in S_n$. Hence $(u,c_1)\in R$. Let us now assume that for some $r\in\{1,\ldots,n-2\}$ every map of rank $r$ is $R$-related to $c_1$, and let be $u$ of rank $r+1$. Since $r<n-1$ we still have $r+1<n$ and hence, there exists some $i\in\nbf$ not in the image of $u$. Let be $j$ any element in the image of $u$, and $l$ such that $u(l)\neq j$. Such an $l$ exists because $u$ is of rank $r+1\geq 2$. Then let us consider the map $u':\nbf\to\nbf$ defined by
\[
u'(k)=\left\{\begin{array}{ll}
k, & \mbox{if $k\neq i$,}
\\
j, & \mbox{if $k=i$.}
\end{array}\right.
\]
Clearly, we have $u'u=u$. Hence the composite $\hat{u}=u'\tau u$, with $\tau$ the transposition $(iu(l))\in S_n$, is an elementary $S_n$-deformation of $u$ and consequently, $(u,\hat{u})\in R$. But $\hat{u}$ is of rank $r$ because all elements in $u^{-1}(u(l))$ are now mapped not to $u(l)\neq j$ but to $j$, while $\hat{u}(k)=u(k)$ for each $k\notin u^{-1}(u(l))$. By the induction hypothesis, $(\hat{u},c_1)\in R$ and hence, $(u,c_1)\in R$ by transitivity. This proves (\ref{classe_c1}).

It follows that the map $T_n\to\{0,1\}$ sending each $u\notin S_n$ to $0$, and each $u\in S_n$ to $1$ is a monoid epimorphism that factors through the quotient $T_n/S_n$, and the induced map $T_n/S_n\to\{0,1\}$ is a monoid isomorphism.

\medskip
\noindent
\underline{Proof of (b)}. $T_n/A_n$ means the quotient of $T_n$ modulo the smallest congruence $R=R_{A_n}$ on $T_n$ such that $[1]_{R}=A_n$. In particular, for each $u\in A_n$ we have $[u]_R=A_n$. It turns out now that $[u]_R=S_n\setminus A_n$ if $u\in S_n\setminus A_n$, and $[u]_R=T_n\setminus S_n$ if $u\notin S_n$. To be precise, let us prove that
\begin{align}
(u,c_1)\in R \ &\Leftrightarrow\ u\notin S_n,
\label{classe_c1_bis}
\\
(u,(12))\in R \ &\Leftrightarrow\ u\in S_n\setminus A_n.
\label{classe_(12)}
\end{align}
The implication to the right  in (\ref{classe_c1_bis}) follows from the fact that $[u]_R\subseteq S_n$ for each $u\in S_n$. To see this, recall that $(u,v)\in R$ if and only if there exists a finite sequence $w_0,\ldots,w_k$ of elements in $T_n$ such that $u=w_0$, $v=w_k$, and for each $i\in\{0,1,\ldots,k-1\}$ either $w_i$ or $w_{i+1}$ is an elementary $A_n$-deformation of the other. Actually, since $A_n$ is a group both conditions are equivalent. For instance, if $w_{i+1}=z_1\tau z_2$ for some $z_1,z_2\in T_n$ such that $w_i=z_1z_2$, and some $\tau\in A_n$ then $w_i=(z_1\tau)\tau^{-1} z_2$. Therefore, it is enough to see that every elementary $A_n$-deformation of an element in $S_n$ is still in $S_n$, and this is clearly true because every factorization of an element in $S_n$ is necessarily as a composite of elements in $S_n$. To prove the converse we proceed again by induction on the rank $r$ of $u$. If $r=1$ then $u$ is the constant map $c_i$ for some $i\in\nbf$, and when $i\neq 1$ the map $c_1$ is the composite of $c_i$ and the permutation $(1ji)\in A_n$ for any $j\neq 1,i$ (here we use that $n\geq 3$). Hence $(u,c_1)\in R$. Let us now assume that for some $r\in\{1,\ldots,n-2\}$ every map of rank $r$ is $R$-related to $c_1$, and let be $u$ of rank $r+1$. Let be $i$ not in the image of $u$, $j$ any element in the image of $u$, and $l$ such that $u(l)\neq j$ (we are using again that $n\geq 3$), and let be $u':\nbf\to\nbf$ the same map as in the proof of the previous proposition. In particular, we have $u'u=u$. Then the composite $\tilde{u}=u'\sigma u$ with $\sigma$ the permutation $(iu(l)j)\in A_n$ is an elementary $A_n$-deformation of $u$ and consequently, $(u,\tilde{u})\in R$. But $\tilde{u}$ is of rank $r$ because all elements in $u^{-1}(u(l))$ are now mapped to $j$, while $\tilde{u}(k)=u(k)$ for each $k\notin u^{-1}(u(l))$. Indeed, $\sigma u$ maps all of $u^{-1}(j)$ to $i$ but this is next mapped again to $j$ by $u'$. By the induction hypothesis, $(\tilde{u},c_1)\in R$ and hence, $(u,c_1)\in R$. This proves (\ref{classe_c1_bis}).

Let us now prove (\ref{classe_(12)}).  The implication to the right follows now from the fact that for every odd permutation (for instance, $(12)$) $[\sigma]_R\subseteq S_n\setminus A_n$. Indeed, every factorization of an odd permutation is necessarily as a composite of an odd, and an even permutation and hence, every elementary $A_n$-deformation of it is also an odd permutation. The converse follows from the obvious fact that every odd permutation $u$ can be connected to $(12)$ by a finite number of elementary $A_n$-deformations. It is enough to write $u$ as the composite of an odd number of transpositions including $(12)$ (two consecutive times if necessary), and then proceed to eliminate consecutive pairs of transpositions by performing elementary $A_n$-deformations until we are left only with the transposition $(12)$. This proves (\ref{classe_(12)}).

By an easy case-by-case check it follows that the map $T_n\to\{-1,0,1\}$ sending each $u\notin S_n$ to $0$, each $u\in A_n$ to $1$, and each $u\in S_n\setminus A_n$ to $-1$ preserves products and hence, is a monoid epimorphism that factors through the quotient $T_n/A_n$, and whose induced map $T_n/A_n\to\{-1,0,1\}$ is a monoid isomorphism.
\end{proof}

\begin{cor}
$T_n$ is congruentially simple for each $n\geq 5$.
\end{cor}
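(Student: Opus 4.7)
The plan is to reduce the statement to checking a few small quotients by combining Theorem~\ref{conjunt_congruencies_M} with Proposition~\ref{quocients_normals_Tn}. By definition, $T_n$ is congruentially simple provided that for every normal submonoid $S\lhd T_n$ with $S\neq\{1\}$, the normal quotient $T_n/S$ has no non-trivial unital congruence. From Example~\ref{reticle_submonoides_normals_Tn}, the non-trivial normal submonoids of $T_n$ for $n\geq 5$ are exactly $A_n$, $S_n$, and $T_n$ itself. The quotient $T_n/T_n$ is the one-element monoid, on which the only congruence is the identity, so this case is vacuous. Proposition~\ref{quocients_normals_Tn} then reduces the remaining work to showing that the two small commutative monoids $(\{0,1\},\cdot,1)$ and $(\{-1,0,1\},\cdot,1)$ admit only the trivial unital congruence.

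The case of $(\{0,1\},\cdot,1)$ is immediate: a unital congruence $R$ has $[1]_R=\{1\}$, forcing $[0]_R=\{0\}$ and hence $R=\Delta$. For $(\{-1,0,1\},\cdot,1)$, whose group of units is $U=\{-1,1\}$, I will apply the characterization of unital congruences (the restriction of $R$ to $U(M)$ equals $\Delta_{U(M)}$) to conclude $(-1,1)\notin R$. It then remains to rule out $(0,\pm 1)\in R$: if $(0,1)\in R$, then $0\in [1]_R=\{1\}$, a contradiction; and if $(0,-1)\in R$, compatibility with multiplication by $-1$ gives $(0,1)\in R$, reducing to the previous contradiction. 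Hence $R=\Delta$, and the two quotients only carry the trivial unital congruence.

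The only point requiring any thought is the case $T_n/A_n\cong\{-1,0,1\}$, where one must use the multiplicative action of the group of units $\{-1,1\}$ on the absorbing element $0$ to eliminate the potentially non-trivial identification $(0,-1)$. Everything else is a direct bookkeeping exercise, and the corollary follows by invoking Theorem~\ref{conjunt_congruencies_M} to assemble these observations into the statement that every exceptional congruence on $T_n$ is unital, i.e.\ that $T_n$ is congruentially simple.
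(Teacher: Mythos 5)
Your proposal is correct and follows essentially the same route as the paper: reduce via Theorem~\ref{conjunt_congruencies_M} and Proposition~\ref{quocients_normals_Tn} to checking that $(\{0,1\},\cdot,1)$ and $(\{-1,0,1\},\cdot,1)$ admit only the trivial unital congruence. The paper states this last fact without proof, whereas you supply the (correct) verification, including the multiplication-by-$-1$ argument ruling out the identification of $0$ with $-1$.
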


\begin{proof}
Neither of the monoids in Proposition~\ref{quocients_normals_Tn} have unital congruences other than the equality. 
\end{proof}

\section{Final comments}

There remain many open questions in this work which are left for future work. Let us mention a few of them.

\begin{itemize}
\item[(1)] There is first the question of the modularity of the lattice of normal submonoids of a monoid. Although it is unlikely that every monoid is modular, we do have no example of monoid whose lattice is not modular. Assuming that example exists, it will be interesting to identify where the modularity of a monoid is hidden, namely, a necessary and sufficient condition for a monoid to be modular.
\item[(2)] There is also the question of the normality of a monoid, and the associated question of the normal simplicity when the monoid is normal. Although it seems possible that the group of units of a monoid is always a normal submonoid of it, we have not been able to prove it. In any case, it is clear that not every normal monoid is normally simple, as the example of the additive monoid of natural numbers shows. Therefore it will be also interesting to identify necessary and/or sufficient conditions for a normal monoid to be normally simple. 
\item[(3)] We have seen that congruences on a monoid can be naturally classified into normal congruences, and exceptional ones. Congruences of the last type are those for which there exists a strictly smaller congruence having the same normal submonoid as equivalence class of the identity element. The question arises whether exceptional congruences only exist for the trivial submonoid, as it is the case of both $\NN_+$, and the finite full transformations monoids, or they also exist for other normal submonoids. This is the question of determining whether every monoid is congruentially simple or not. If not, the problem arises of determining what normal submonoids of a non-congruentially simple monoid are what might be called ``congruentially complete'', i.e. such that there is only one congruence with this normal submonoid as equivalence class of the identity element (equivalently, such that the normal quotient has no unital congruence other than the equality).
\item[(4)] We have shown that, in general, the lattice of normal submonoids of a monoid can be identified with just a join subsemilattice of the whole lattice of congruences on it. Are the groups the only monoids for which $\mathsf{NorSub}(M)$ and $\mathsf{Cong}(M)$ are isomorphic? 
\item[(5)] We have seen a way of reducing the computation of the congruences on a monoid to being able to compute the unital ones. What input or inputs in the monoid determine a unital congruence? In other words, what additional input or inputs, together with the equivalence class of the identity element, determine completely a congruence on a monoid? Here the lattice of ideals, and the set of non-identity idempotents of the monoid perhaps play a crucial role. In fact, in the case of groups both things collapse.
\end{itemize}

\noindent
\bibliography{congruencies_versio_penjada_arxiv}{}
\bibliographystyle{plain}

\end{document}